\newtheorem{thm}{ \bf Theorem}[section]
\newtheorem{cor}[thm]{ \bf Corollary}
\newtheorem{lem}[thm]{ \bf Lemma}
\newtheorem{rem}[thm]{ \bf Remark}
\newcommand{\be}{\begin{equation}}
\newcommand{\ee}{\end{equation}}
\newcommand{\bea}{\begin{eqnarray}}
\newcommand{\eea}{\end{eqnarray}}
\newcommand{\Bea}{\begin{eqnarray*}}
\newcommand{\Eea}{\end{eqnarray*}}
\numberwithin{equation}{thm}
\def\CF{{\mathcal F}}
\def\CH{{\mathcal H}}
\def\CA{{\mathcal A}}
\def\CB{{\mathcal B}}
\def\CS{{\mathcal S}}
\def\CM{{\mathcal M}}
\def\C{{\mathbb C}}
\def\H{{\mathbb H}}
\def\N{{\mathbb N}}
\def\R{{\mathbb R}}
\begin{document}

\title[Analytic vectors]
{On the structure of analytic vectors for the Schr\"odinger representation}
\author{R. Garg and S. Thangavelu}

\address{Department of Mathematics\\ Indian Institute of Science\\Bangalore-560 012}
\email{rahulgarg@math.iisc.ernet.in, veluma@math.iisc.ernet.in}


\keywords{Heisenberg group, Schr\"odinger representations, Hermite and Laguerre functions, Weyl transform, Hermite semigroup, Poisson-Hermite semigroup}

\subjclass[2010]{Primary 42C15 ; Secondary 42B35, 42C10, 42A56.}


\begin{abstract}
This article deals with the structure of analytic and entire vectors for the 
Schr\"{o}dinger representations of the Heisenberg group. Using refined 
versions of Hardy's theorem and their connection with Hermite expansions we 
obtain very precise representation theorems for analytic and entire vectors.
\end{abstract}

\maketitle

\section{Introduction}
\setcounter{equation}{0}

In a recent work Gimperlein-Kr\"otz-Lienau \cite{GKL} have proved a factorisation theorem of Dixmier-Malliavin type for the space of analytic vectors associated to a representation of a real Lie group $G.$ To be more specific, let $ (\pi,V) $ be a representation of moderate growth of a real Lie group $ G $ on a Fr\'echet space $ V.$ The authors have produced a natural algebra $ A(G) $ such that $ \pi(A(G))V = V^\omega $ where $ V^\omega $ stands for the space of analytic vectors for $ \pi.$ As a consequence, they have shown that the space $ V^\omega $ coincides with the space of analytic vectors for the Laplace-Beltrami operator on $ G.$ This extends a result of Dixmier-Malliavin \cite{DM} where the space $ V^\infty $ of smooth vectors was considered.

The study of analytic vectors has a long history going back to the work of Nelson \cite{N} in 1959. In the context of nilpotent Lie groups, R. Goodman \cite{G1}, \cite{G2} and R. Penney \cite{P} have studied the spaces of analytic and entire vectors for unitary representations of nilpotent Lie groups. Thus, the work in \cite{GKL} is a far reaching generalisation even in the context of nilpotent Lie groups.

In the present article our goal is modest: we restrict ourselves to the Schr\"odinger representations $ \pi_\lambda $ of the Heisenberg group $ \H^n.$ The space of analytic vectors for $ \pi_\lambda $ has been well studied (by Goodman, Penney and others) and there is a close connection between Hermite expansions and analytic vectors. Our aim is to exploit this connection and prove very precise statements about the structures of analytic and entire vectors. We state results for Schr\"odinger representations $ \pi_\lambda$ for any $\lambda \neq 0$, but for simplicity the proofs are written for $\lambda = 1$ only.

The paper is organised as follows: In section 2 we recall some basic definitions and prove a representation theorem (Theorem 2.2) for the smooth vectors.

We will consider the analytic vectors in section 3. The motivation comes from the work of Hille (\cite{Hi}), where he gave a necessary and sufficient condition for a vector to be analytic. Exploring the asymptotic properties of Hermite functions, he proved it in 1-dimension. We prove the higher dimensional analogue of the result (Theorem 3.3), and then the following representation theorem for analytic vectors.

For a function $ f(x,u,t) $ on $ \H^n $ we define $ f^\lambda $ to be the function on $\R^{2n}$ obtained by taking the inverse Fourier transform of $ f $ at $ \lambda $ in the last variable. We let $ \CF_1 f^\lambda $ and $ \CF_2 f^\lambda$ to stand for the Fourier transforms of $ f^\lambda $ in the $ x $ and $ u $ variables respectively. Let $\CB^\lambda$ be the space of all functions $f $ in $ L^1(\H^n)$ which satisfy the estimates
$$ |\CF_1 f^\lambda(x,u)| \leq C e^{-t(\frac{1}{|\lambda|}x^2 + \frac{|\lambda|}{4} u^2)^{\frac{1}{2}}},~~~ |\CF_2 f^\lambda(x,u)| \leq C e^{-t(\frac{|\lambda|}{4} x^2 +
\frac{1}{|\lambda|}u^2)^{\frac{1}{2}}} $$
for some $ t > 0.$

\begin{thm} The space $ \CB^\lambda $ defined above is an algebra under convolution and we have $ \pi_\lambda(\CB^\lambda)V = V^\omega $ where $ V = L^2(\R^n).$
\end{thm}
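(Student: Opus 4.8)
The plan is to reduce the statement to the Hermite--expansion description of $V^\omega$ furnished by Theorem 3.3, together with the dictionary $\pi_1(f)=W(f^1)$, under which convolution on $\H^n$ becomes $1$-twisted convolution $f^1\ast_1 g^1$ on $\R^{2n}$ and $W$ is the Weyl transform. Throughout take $\lambda=1$ and write $H=-\Delta+|\xi|^2$ for the Hermite operator on $\R^n$, with $L^2$-normalised eigenfunctions $\Phi_\alpha$ and eigenvalues $2|\alpha|+n$, and $\{\Phi_{\alpha\beta}\}$ for the special Hermite basis of $L^2(\R^{2n})$, normalised so that $W(\Phi_{\alpha\beta})$ is the rank-one operator $\varphi\mapsto c_n(\varphi,\Phi_\beta)\Phi_\alpha$. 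Abbreviate $a_\alpha=\sqrt{2|\alpha|+n}$.

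\emph{Step 1 (the key lemma).} I would first prove that $f\in L^1(\H^n)$ lies in $\CB^1$ if and only if its symbol $g=f^1$ belongs to $L^2(\R^{2n})$ and, for some $s>0$,
$$ |(g,\Phi_{\alpha\beta})|\ \le\ C\,e^{-s(a_\alpha+a_\beta)} ; $$
equivalently, $e^{s\sqrt H}W(g)e^{s\sqrt H}$ extends to a Hilbert--Schmidt operator for some $s>0$. This is the two-sided, $n$-dimensional form of Hille's asymptotic analysis: after computing the integral kernel of $W(g)$ explicitly, one matches the two Gaussian-type bounds on $\CF_1 g$ and $\CF_2 g$ against the Plancherel--Rotach asymptotics of Hermite (equivalently Laguerre) functions, the first bound controlling $e^{s\sqrt H}W(g)$ and the second $W(g)e^{s\sqrt H}$, so that both estimates are genuinely needed. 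I expect Step 1 to be the main obstacle; the exact shape of the exponents $\big(\tfrac1{|\lambda|}x^2+\tfrac{|\lambda|}4u^2\big)^{1/2}$ is forced by the eigenvalue asymptotics $a_\alpha$, and the estimates required here are of the same nature as those behind Theorem 3.3.

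\emph{Step 2 (algebra property).} Granting Step 1, if $f,g\in\CB^1$ then $f\ast g\in L^1(\H^n)$ and $(f\ast g)^1=f^1\ast_1 g^1$, so the matrix of $W((f\ast g)^1)$ in the basis $\{\Phi_\alpha\}$ is the product of those of $W(f^1)$ and $W(g^1)$. With parameters $s_f,s_g$ from Step 1,
$$ \Big|\sum_\gamma (f^1,\Phi_{\alpha\gamma})(g^1,\Phi_{\gamma\beta})\Big|\ \le\ C_f C_g\, e^{-s_f a_\alpha}e^{-s_g a_\beta}\sum_\gamma e^{-(s_f+s_g)a_\gamma}, $$
and the $\gamma$-sum is finite because $\#\{\gamma:|\gamma|=k\}$ grows only polynomially in $k$. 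Hence $f\ast g$ has special Hermite coefficients satisfying the bound of Step 1 with $s=\min(s_f,s_g)$, so $f\ast g\in\CB^1$.

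\emph{Step 3 (both inclusions).} For $\pi_1(\CB^1)V\subseteq V^\omega$: given $f\in\CB^1$ and $\varphi\in V$, put $\psi=\pi_1(f)\varphi=W(f^1)\varphi$, so $(\psi,\Phi_\alpha)=c_n\sum_\beta(f^1,\Phi_{\alpha\beta})(\varphi,\Phi_\beta)$. By Cauchy--Schwarz and Step 1, $|(\psi,\Phi_\alpha)|\le c_n\|\varphi\|\big(\sum_\beta|(f^1,\Phi_{\alpha\beta})|^2\big)^{1/2}\le C\|\varphi\|\,e^{-sa_\alpha}$, hence $\sum_\alpha|(\psi,\Phi_\alpha)|^2 e^{2s'a_\alpha}<\infty$ for every $s'<s$, and $\psi\in V^\omega$ by Theorem 3.3. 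For the reverse inclusion, let $\psi\in V^\omega$; Theorem 3.3 gives $t>0$ with $\sum_\alpha|(\psi,\Phi_\alpha)|^2 e^{2ta_\alpha}<\infty$. Let $\varphi\in V$ have Hermite coefficients $(\psi,\Phi_\alpha)e^{ta_\alpha}$, and choose $f\in L^1(\H^n)$ with $f^1=c_n^{-1}\sum_\alpha e^{-ta_\alpha}\Phi_{\alpha\alpha}$ --- e.g.\ $f(x,u,\tau)=f^1(x,u)\chi(\tau)$ with $\chi\in L^1(\R)$ normalised so that its inverse Fourier transform equals $1$ at $1$; the series converges in $L^1(\R^{2n})$ since $\|\Phi_{\alpha\alpha}\|_1$ grows polynomially. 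Then $W(f^1)=e^{-t\sqrt H}$, the special Hermite coefficients of $f^1$ are $c_n^{-1}e^{-ta_\alpha}\delta_{\alpha\beta}$, which obey the bound of Step 1 with $s=t/2$, so $f\in\CB^1$; and $\pi_1(f)\varphi=e^{-t\sqrt H}\varphi=\psi$. Therefore $V^\omega\subseteq\pi_1(\CB^1)V$, and combined with Step 2 this proves the theorem.
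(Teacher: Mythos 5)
Your overall architecture --- translate everything into the special Hermite matrix $(f^1,\Phi_{\alpha\beta})$ of the Weyl transform and work there --- is a legitimate and genuinely different route from the paper's, and your Step 2 is in fact slicker than the paper's treatment of the algebra property (the paper instead writes out $\CF_1(g^1\times h^1)$ as an explicit integral and estimates the integrand by completing a square in the exponent). The problem is that the entire analytic content of the theorem has been pushed into your Step 1, which you assert rather than prove and yourself flag as ``the main obstacle.'' Step 1 is not a technical preliminary: its forward direction (pointwise Gaussian-type bounds on $\CF_1 f^1$ and $\CF_2 f^1$ imply $|(f^1,\Phi_{\alpha\beta})|\le Ce^{-s(a_\alpha+a_\beta)}$) is exactly what the paper spends Lemma 3.8 and Theorem 3.9 establishing --- one first shows that the integral kernel representations of $W(F)$ and $\CF W(F)\CF^{-1}$ turn the hypotheses into pointwise decay $|\psi(\xi)|\le C\|\varphi\|_2 e^{-t|\xi|/2}$, $|\hat\psi(\eta)|\le C\|\varphi\|_2e^{-t|\eta|/2}$ for $\psi=W(F)\varphi$, and then converts such two-sided Hardy-type decay into exponential decay of Hermite coefficients via the Bargmann transform (estimate $|B\psi(x+iy)|\le Ce^{\frac14(x^2+y^2)}e^{-t|x|/\sqrt n}$ and its counterpart in $y$, apply Cauchy's formula and optimize $r_j=(2\alpha_j+1)^{1/2}$). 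A one-sentence appeal to Plancherel--Rotach asymptotics does not substitute for this, and note also that this route only gives decay in $\alpha$ uniformly in $\beta$ (and, via the adjoint, in $\beta$ uniformly in $\alpha$), so the product bound $e^{-s(a_\alpha+a_\beta)}$ comes with a loss in $s$ that should be tracked.

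The backward direction of Step 1 is likewise unproven, and you rely on it twice: to conclude $f\ast g\in\CB^1$ in Step 2, and to conclude that your Poisson-type kernel $\sum_\alpha e^{-ta_\alpha}\Phi_{\alpha\alpha}$ lies in $\CB^1$ in Step 3. Proving it requires quantitative pointwise bounds on Hermite functions in the transition and oscillatory regions and a summation argument; it is plausible but not free. The paper sidesteps both uses: for the inclusion $V^\omega\subseteq\pi(\CB^1)V$ it takes $h^1=V(\widetilde\varphi,\widetilde\varphi)/(f,\varphi)$, whose partial Fourier transform is computed in closed form as $\widetilde\varphi(x+\frac u2)\overline{\widetilde\varphi(x-\frac u2)}$, so that the required pointwise bounds follow directly from the pointwise characterization of analytic vectors in Theorem 3.3 (Hille's estimate $|\varphi(x+iy)|\le Ce^{-|x|(t^2-y^2)^{1/2}}$); and for the algebra property it never needs to return from coefficients to pointwise estimates at all. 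Until you supply proofs of both halves of Step 1 (or reroute around the backward half as the paper does), the proposal is a reduction of the theorem to an unproved equivalence, not a proof.
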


As expected, it turns out that the study of analytic (resp. entire) vectors for the Schr\"odinger representation $ \pi_\lambda $ is intimately related to the Poisson-Hermite semigroup $ e^{-t\sqrt{H(\lambda)}}$ (resp. the Hermite semigroup $ e^{-t H(\lambda)}$). Moreover, it is also connected with Hardy's uncertainty principle as we will see in Section 4.

For each $t > 0$ the image of $ L^2(\R^n) $ under the Hermite semigroup $e^{-tH(\lambda)}$ is a subclass of entire vectors for $\pi_\lambda $ and is called the Hermite-Bergman space denoted by $ \CH_t^\lambda(\C^n).$ For $\lambda = 1$, we write $\CH_t(\C^n)$ for $\CH_t^1(\C^n).$ Let $ E_t^\lambda $ consists of functions on $\R^n$ which are restrictions of functions from $\CH_t^\lambda(\C^n)$. Using some ramifications of Hardy's theorem we obtain the following representation theorem for the union and intersection of $\CH_t^\lambda(\C^n).$ For each $t > 0$ and $\lambda \neq 0$ we let $\CA_t^\lambda$ stand for the space of all functions $ f $ in $ L^1(\H^n) $ satisfying
$$ |\CF_1 f^\lambda(x,u)| \leq C e^{-\tanh(t\lambda) (\frac{1}{\lambda}x^2 + \frac{\lambda}{4}u^2)},~~~~ |\CF_2 f^\lambda(x,u)| \leq C e^{-\tanh(t\lambda) (\frac{\lambda}{4}x^2 + \frac{1}{\lambda}u^2)}.$$
Then both $ \cup_{t>0}\CA_t^\lambda $ and $\cap_{t>0} \CA_t^\lambda $ are algebras under convolution.

\begin{thm} For every $ \lambda \neq 0 $ we have
$$ \pi_\lambda(\cup_{t>0}\CA_t^\lambda)V = \cup_{t>0} E_t^\lambda,~~ \pi_\lambda(\cap_{t>0}\CA_t^\lambda)V = \cap_{t>0}E_t^\lambda. $$
\end{thm}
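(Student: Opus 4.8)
The plan is to leverage the precise correspondence between the group-side Fourier data of $f\in L^1(\H^n)$ and the operator-side data of $\pi_\lambda(f)$, exactly as must have been set up for Theorem 1.1 (the $\CB^\lambda$ statement). Recall that for $f\in L^1(\H^n)$ and $\lambda=1$ we have $\pi_1(f)=W(f^1)$, the Weyl transform of $f^1$, and the Weyl transform is (up to normalization) a unitary-type map intertwining $\CF_1f^1$, $\CF_2f^1$ with the ``matrix coefficients'' of $W(f^1)$ against Hermite functions. Concretely, $\pi_1(f)\Phi_\alpha = \sum_\beta (W(f^1)\Phi_\beta,\Phi_\alpha)\,\Phi_\beta$, and the decay of $\CF_1f^1$, $\CF_2f^1$ translates into decay of these Hermite matrix coefficients. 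The characterization of the Hermite--Bergman space $\CH_t(\C^n)$ that I expect is already available (from the discussion of Hardy's theorem and the Hermite semigroup): $g\in L^2(\R^n)$ extends to an element of $\CH_t(\C^n)$ iff its Hermite coefficients $(g,\Phi_\alpha)$ decay like $e^{-(2|\alpha|+n)\,t}$ up to a fixed polynomial/constant factor; equivalently iff $e^{sH}g\in L^2$ for some (all) $s<t$, i.e. iff $g\in E_t^\lambda$ with the weight governed precisely by $\tanh$ — this is why the exponent in the definition of $\CA_t^\lambda$ is $\tanh(t\lambda)$ rather than $t$: the Mehler kernel for $e^{-tH}$ carries coefficients $\tanh t$ in the Gaussian, so the condition $|\CF_jf^\lambda|\le Ce^{-\tanh(t\lambda)(\cdots)}$ on the group matches $e^{-tH(\lambda)}$-smoothing on the Hilbert space side.

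The two halves then go as follows. \textbf{Inclusion $\pi_\lambda(\cup_t\CA_t^\lambda)V\subseteq\cup_t E_t^\lambda$ (and the $\cap$ version):} given $f\in\CA_t^\lambda$ and $\phi\in V=L^2(\R^n)$, expand $\pi_\lambda(f)\phi$ in the Hermite basis; using the Weyl-transform dictionary, the $\CA_t^\lambda$-estimates on $\CF_1f^\lambda,\CF_2f^\lambda$ force the Hermite coefficients of $\pi_\lambda(f)\phi$ to decay at rate $e^{-c\tanh(t\lambda)|\alpha|}$, hence $\pi_\lambda(f)\phi$ lies in the Hermite--Bergman space $\CH_{t'}^\lambda$ for a suitable $t'\sim t$ (possibly with a harmless loss in the parameter, which is absorbed by taking unions/intersections over $t$), i.e. $\pi_\lambda(f)\phi\in E_{t'}^\lambda$. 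For the intersection statement one runs the same estimate uniformly: if $f\in\cap_t\CA_t^\lambda$ then $\pi_\lambda(f)\phi\in E_{t'}^\lambda$ for every $t'$. \textbf{Surjectivity $\pi_\lambda(\cup_t\CA_t^\lambda)V\supseteq\cup_t E_t^\lambda$:} given $g\in E_t^\lambda$, I want to write $g=\pi_\lambda(f)\phi$ with $f\in\CA_{t'}^\lambda$. The natural choice is to take $\phi$ a suitable fixed vector (e.g.\ a Gaussian $\Phi_0$, or $e^{-sH(\lambda)}\Phi_0$) and solve for $f$ by prescribing the Weyl transform $W(f^\lambda)$ to be the rank-one-type operator $g\otimes\overline{\phi}$ (or a finite/convergent combination thereof) and then inverting the Weyl transform and the Fourier transform in $t$ to recover $f$ on $\H^n$; the decay of $g$'s Hermite coefficients, together with the fast decay of $\phi$, yields precisely the Gaussian decay of $\CF_1f^\lambda,\CF_2f^\lambda$ required for membership in $\CA_{t'}^\lambda$. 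One must also check $f\in L^1(\H^n)$, which follows from the same Gaussian bounds. The $\cap$ case is handled by choosing the decomposition to depend measurably/boundedly on $t$, or more simply by noting $\cap_t E_t^\lambda=\pi_\lambda(\Phi_0)(\text{Schwartz-type space})$ and exhibiting the corresponding $f$ directly.

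The algebra assertions (that $\cup_t\CA_t^\lambda$ and $\cap_t\CA_t^\lambda$ are convolution algebras) I would dispatch first, and essentially for free: under $f\mapsto f^\lambda$ group convolution on $\H^n$ becomes twisted convolution $\times_\lambda$ on $\R^{2n}$, and the Gaussian weights appearing in $\CA_t^\lambda$ are (up to constants) multiplicative/sub-multiplicative under twisted convolution in the relevant variables — this is the same computation that shows $\CH_t^\lambda(\C^n)$ is closed under the natural product, or equivalently that $e^{-tH}L^2\cdot e^{-tH}L^2\subseteq e^{-t'H}L^2$. The main obstacle, I expect, is \textbf{bookkeeping of the semigroup parameter}: the passage between the group weight $e^{-\tanh(t\lambda)(\cdots)}$ and the Hilbert-space rate $e^{-(2|\alpha|+n)t}$ is not the identity on the parameter $t$ — there is a genuine $\tanh$ versus linear mismatch, plus constants from the Mehler kernel and from the $n$-dimensional normalization — so one has to be careful that the unions and intersections over $t$ really do match on both sides. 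This is exactly the kind of place where a sloppy argument would prove $\pi_\lambda(\cup_t\CA_t^\lambda)V\subseteq\cup_t E_t^\lambda$ with the ``wrong'' indexing; the fix is that $\tanh$ is a strictly increasing bijection of $(0,\infty)$ onto $(0,1)$, so reparametrizing $s=\tanh(t\lambda)$ makes the two families cofinal in each other and the set equalities go through. A secondary technical point is justifying the term-by-term manipulations (interchanging sums and the Weyl/Fourier transforms) for functions that are merely $L^1$ on $\H^n$ with the stated pointwise bounds; the Gaussian decay makes all the relevant series and integrals absolutely convergent, so this is routine but should be stated.
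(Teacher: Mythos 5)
Your overall architecture is reasonable (algebra via sub\-multiplicativity of the Gaussian weights under twisted convolution; one inclusion by an explicit kernel; the other by translating kernel decay into Hermite coefficient decay), and your surjectivity half is close to what works. The paper realizes $E_t^\lambda$ via the special Hermite heat kernel $p_t$, using $W(p_t)=e^{-tH}$, so that every $\psi=e^{-tH}\varphi$ is $\pi(h)\varphi$ with $h^1=p_t\cdot q$ a fixed universal kernel; for the intersection the rank-one/Moyal construction you describe is indeed the right device, since $\CF_1 V(\widetilde\psi,\widetilde\psi)(x,u)=\widetilde\psi(x+\frac u2)\overline{\widetilde\psi(x-\frac u2)}$ inherits the Gaussian decay of $\psi$ for every $t$ simultaneously. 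You should, however, note that this requires the fixed second vector to have the same Gaussian decay, which is why one takes $V(\widetilde\psi,\widetilde\psi)$ rather than $\psi\otimes\overline{\phi}$ with an arbitrary $\phi$.

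The genuine gap is in the forward inclusion, which you dispatch by asserting a ``Weyl-transform dictionary'' under which the $\CA_t^\lambda$ bounds force the Hermite coefficients of $\pi_\lambda(f)\varphi$ to decay at rate $e^{-c\tanh(t\lambda)|\alpha|}$. This is precisely the step that needs a theorem, and the rate you wrote down is wrong in a way that matters: $\tanh(t\lambda)$ is bounded by $1$, so a decay rate proportional to it never improves past a fixed constant as $t\to\infty$, and the intersection equality $\pi_\lambda(\cap_t\CA_t^\lambda)V\subseteq\cap_t E_t^\lambda$ would then fail; your cofinality remark about $\tanh$ being a bijection onto $(0,1)$ rescues the union but not the intersection. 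The correct mechanism (the paper's Lemma 4.6 and Theorem 4.7) is two-step: first, the integral-kernel form of the Weyl transform, $W(F)\varphi(\xi)=(2\pi)^{n/2}\int(\CF_1F)\bigl(\frac{-u-\xi}{2},u-\xi\bigr)\varphi(u)\,du$ together with its Fourier-transformed analogue in terms of $\CF_2F$, converts the hypotheses on $\CF_1F,\CF_2F$ into pointwise Hardy-type bounds $|\psi(\xi)|\le C\|\varphi\|_2e^{-\frac12\tanh(t)\xi^2}$ and $|\hat\psi(\eta)|\le C\|\varphi\|_2e^{-\frac12\tanh(t)\eta^2}$; second, a refinement of the sub-critical case of Hardy's uncertainty principle (reduced to Vemuri's one-dimensional result) shows that such a $\psi$ satisfies $|(\psi,\Phi_\alpha)|\le C\,e^{-(2|\alpha|+n)t/2n}$ up to polynomial factors. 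The exponent here is governed by the inverse hyperbolic tangent of the Gaussian rate, hence is linear and unbounded in $t$; this is exactly what makes both the union and the intersection come out right, with the dimensional loss $t\mapsto t/2n$ absorbed by taking unions and intersections. Without identifying this Hardy-theorem input, the forward inclusion is not proved.
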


We then deduce Theorem 4.7 from the one dimensional result of Vemuri \cite{V} concerning the decay of Hermite coefficients of functions satisfying Hardy's condition. There is a generalisation of Hardy's theorem in 1-dimension, due to Pfannschmidt \cite{Pf}. Using Theorem 4.7, we obtain its exact analogue in higher dimension.

\begin{thm} Let $f$ be a measurable function on $ \R^n $ satisfying
$$ |f(x)| \leq P(x) e^{-\frac{1}{2}a x^2},~~ |\hat{f}(\xi)| \leq Q(\xi) e^{-\frac{1}{2}b\xi^2} $$
where $a, b > 0 $ and $ P, Q $ are such that
$$ \limsup_{|x|\rightarrow \infty}\frac{\log{P(x)}}{|x|^2}~=~0~= ~\limsup_{|y|\rightarrow \infty}\frac{\log{Q(y)}}{|y|^2}.$$
Then,\\
(i) if $ ab > 1,$ then $ f \equiv 0, $ \\
(ii) if $ab = 1 $, then $f$ and $\hat{f}$ are entire functions on $\C^n $ of the form
$$ f(z)= R(z)e^{-\frac{1}{2}az^2}, ~~~ \hat{f}(z) = S(z)e^{-\frac{1}{2a}z^2},$$
where $R(z)$ and $S(z)$ are entire functions on $\C^n $ of order 2 and at most of minimal type, and \\
(iii) there are infinitely many functions (e.g. all the Hermite functions) satisfying the above estimates for $ ab < 1 $ .
\end{thm}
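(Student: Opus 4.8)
The plan is to make part (ii) the core of the argument: derive it from the Hermite expansion via Theorem 4.7, obtain part (i) from (ii) together with a Liouville-type lemma, and observe that part (iii) is immediate. For (iii), the Hermite functions $\Phi_\alpha$ on $\R^n$ satisfy $|\Phi_\alpha(x)|\le C_\alpha(1+|x|)^{|\alpha|}e^{-\frac{1}{2}x^2}$ and $\widehat{\Phi_\alpha}$ is a unimodular multiple of $\Phi_\alpha$, so each $\Phi_\alpha$ obeys both estimates with polynomial $P,Q$ and any $a,b<1$; since there are infinitely many $\alpha$, this gives infinitely many examples when $ab<1$.

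For (ii) I first reduce to $a=b=1$. The dilation $g(x)=f(cx)$ with $c^4=b/a$ turns the exponents into $\frac{1}{2}x^2$ and $\frac{1}{2}\xi^2$ (using $ab=1$), preserves the $\limsup$ conditions on the polynomial factors, and maps the class of entire functions of order $2$ and at most minimal type to itself; so the general case follows by undoing the dilation. Assume then $|f(x)|\le P(x)e^{-\frac{1}{2}x^2}$, $|\hat f(\xi)|\le Q(\xi)e^{-\frac{1}{2}\xi^2}$. Then $f,\hat f\in L^1\cap L^2$ and $f$ already extends to an entire function on $\C^n$. By Theorem 4.7 the Hermite coefficients $c_\alpha=(f,\Phi_\alpha)$ satisfy $|c_\alpha|\le C_s e^{-s|\alpha|}$ for every $s>0$ (i.e.\ $f\in\cap_{t>0}E_t^1$), and the expansion $f=\sum_\alpha c_\alpha\Phi_\alpha$ converges locally uniformly on $\C^n$. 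Writing $\Phi_\alpha(z)=p_\alpha(z)e^{-\frac{1}{2}z^2}$ with $p_\alpha$ the normalised Hermite polynomial, we get $f(z)=R(z)e^{-\frac{1}{2}z^2}$ with $R=\sum_\alpha c_\alpha p_\alpha$, and it remains to bound the growth of $R$. From the generating identity $\sum_{k=0}^\infty H_k(z)w^k/k!=e^{2zw-w^2}$ and Cauchy's inequality on a circle $|w|=\rho$, optimised in $\rho$, one obtains (up to a polynomial factor in $|\alpha|$) $|p_\alpha(z)|\le C\,(2e)^{|\alpha|/2}\prod_i\alpha_i^{-\alpha_i/2}(1+|z|)^{|\alpha|}$; substituting this and $|c_\alpha|\le C_s e^{-s|\alpha|}$ into $\sum_\alpha|c_\alpha||p_\alpha(z)|$, grouping by $|\alpha|=N$ and optimising in $N$, yields $|R(z)|\le C_\epsilon e^{\epsilon|z|^2}$ for every $\epsilon>0$, i.e.\ $R$ is entire of order $2$ and at most minimal type. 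Since $\widehat{\Phi_\alpha}$ is a unimodular multiple of $\Phi_\alpha$, $\hat f$ has Hermite coefficients of the same size, so the identical argument gives $\hat f(z)=S(z)e^{-\frac{1}{2}z^2}$ with $S$ of the same kind; undoing the dilation produces the stated forms $R(z)e^{-\frac{1}{2}az^2}$ and $S(z)e^{-\frac{1}{2a}z^2}$.

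For (i), let $ab>1$ and put $b'=1/a<b$. Then $f$ satisfies the hypotheses with the pair $(a,b')$, so by (ii) $\hat f(z)=S(z)e^{-\frac{1}{2a}z^2}$ with $S$ entire of order $2$ and at most minimal type; but then $|S(\xi)|\le Q(\xi)e^{-\frac{1}{2}(b-b')\xi^2}\le Ce^{-\delta|\xi|^2}$ for large $|\xi|$, where $\delta=\frac{1}{4}(b-b')>0$. So it suffices to prove: an entire $S$ on $\C^n$ of order $\le 2$ and minimal type with $|S(\xi)|\le Ce^{-\delta|\xi|^2}$ on $\R^n$ vanishes identically. Fixing $\theta$ in the unit sphere of $\R^n$ and looking at $g(t)=S(t\theta)$ reduces this to one variable, with $|g(t)|\le C_\epsilon e^{\epsilon|t|^2}$ for $t\in\C$ and $|g(x)|\le Ce^{-\delta x^2}$ for $x\in\R$. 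For such $g$, fix an even integer $k\ge 6$, put $\omega=e^{2\pi i/k}$ and $F(z)=\prod_{j=0}^{k-1}g(\omega^j z)$: then $F$ is entire of order $\le 2$, invariant under $z\mapsto\omega z$, hence $F(z)=\Psi(z^k)$ with $\Psi$ entire of order $\le 2/k<1/2$; and on each ray $\arg z=2\pi j/k$ (where $z^k\ge 0$) $F\to 0$, since two of the $k$ factors, $g(\pm z)$ (using $\omega^{k/2}=-1$), decay like $e^{-\delta|z|^2}$ there while each of the remaining $k-2$ is $\le C_\epsilon e^{\epsilon|z|^2}$ with $\epsilon$ chosen so that $(k-2)\epsilon<2\delta$. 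Thus $\Psi$ is bounded on the positive real axis; by Wiman's theorem a transcendental entire function of order $<1/2$ is unbounded on every ray, so $\Psi$ is a polynomial, and being bounded on a ray it is constant, hence $\Psi\equiv 0$ because $F\to 0$. Therefore $g\equiv 0$, so $S$ vanishes on $\R^n$ and, being entire, on all of $\C^n$; consequently $\hat f\equiv 0$ and $f\equiv 0$.

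I expect the crux to be the quantitative step in part (ii): converting the super-exponential decay of the Hermite coefficients supplied by Theorem 4.7 into the sharp bound $|R(z)|\le C_\epsilon e^{\epsilon|z|^2}$, via the Cauchy-estimate bound on the Hermite polynomials and the optimisation in $|\alpha|$ --- this is where the precise ``order $2$, at most minimal type'' conclusion is genuinely forced. The other ingredients are routine: the dilation-invariant reduction, the classical Wiman / Phragm\'en--Lindel\"of lemma feeding part (i), and the trivial part (iii). An alternative route to (i) not passing through (ii) would be to first establish the two-sided estimate $|f(x+iy)|\le C_\epsilon e^{-(\frac{a}{2}-\epsilon)x^2+(\frac{1}{2b}+\epsilon)y^2}$ by a Phragm\'en--Lindel\"of argument and then run the same root-of-unity symmetrisation on $f$ directly, but the route above reuses Theorem 4.7 and seems cleaner.
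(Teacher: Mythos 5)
Your overall architecture is the same as the paper's for the core part (ii): reduce to $a=b=1$ by dilation, observe that the hypotheses give $|f(x)|\leq C_t e^{-\frac{1}{2}\tanh(2t)x^2}$ and likewise for $\hat f$ for \emph{every} $t>0$, feed this into Theorem 4.7 to get $|(f,\Phi_\alpha)|\leq C_s e^{-s|\alpha|}$ for all $s>0$, and then convert this superexponential coefficient decay into the order $2$, minimal type statement. Where you differ: the paper gets entirety of $f$ and $\hat f$ from Pfannschmidt's Proposition 3.2 (and uses it to replace $P,Q$ by continuous functions, which is what legitimises the global $\tanh(2t)$ bounds), and then converts the coefficient decay into the pointwise bound $|f(x+iy)|\leq C e^{-\frac{1}{2}\tanh(s)x^2+\frac{1}{2}\coth(s)y^2}$ via Mehler's formula; you instead estimate the series $R=\sum_\alpha c_\alpha p_\alpha$ termwise with Cauchy estimates on the Hermite generating function. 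These are equivalent in spirit (Mehler's formula \emph{is} the generating function), and either works. For part (i) the paper simply asserts it follows from (ii); your Wiman/Phragm\'en--Lindel\"of symmetrisation argument is a correct way to supply the missing detail, though the quickest route is to note that for $ab>1$ the hypotheses already imply the classical Hardy bounds with constants whose product exceeds $1$.

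There is one concrete error: the inequality $|p_\alpha(z)|\leq C\,(2e)^{|\alpha|/2}\prod_i\alpha_i^{-\alpha_i/2}(1+|z|)^{|\alpha|}$ is false, even up to polynomial factors in $|\alpha|$. At $z=0$ and $k$ even one has $|p_k(0)|\sim k^{-1/4}$, whereas your bound predicts $(2e/k)^{k/2}$, which is superexponentially small; what you have written down is only the leading-term behaviour for $|z|\gg\sqrt{k}$. The honest output of the Cauchy estimate $|H_k(z)|\leq k!\,\rho^{-k}e^{2|z|\rho+\rho^2}$ with $\rho=\sqrt{k/2}$ is, after normalisation and Stirling, $|p_k(z)|\leq C(1+k)^{1/4}e^{\sqrt{2k}\,|z|}$, and in $n$ variables $|p_\alpha(z)|\leq C\prod_j(1+\alpha_j)^{1/4}e^{\sqrt{2\alpha_j}\,|z_j|}$. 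With this corrected input your optimisation still goes through: $\sum_{N}e^{-sN}N^{c}e^{\sqrt{2N}\,|z|}$ is maximised near $N\sim |z|^2/(2s^2)$ and is bounded by $C_s e^{|z|^2/(2s)}$, so letting $s\to\infty$ recovers $|R(z)|\leq C_\epsilon e^{\epsilon|z|^2}$ for every $\epsilon>0$. So the slip does not invalidate the strategy, but the step as stated would not survive refereeing and must be repaired. A smaller point: in (iii), the unscaled Hermite functions only witness the case $a,b\leq 1$; for general $a,b$ with $ab<1$ you need the dilated functions $\Phi_\alpha(\sqrt{c}\,x)$ with $a\leq c\leq 1/b$.
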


As an immediate consequence of Theorem 1.3 and Theorem 4.2 we obtain the following characterisation of $\CH_\infty(\C^n) = \cap_{t>0} \CH_t^{\lambda}(\C^n) = \cap_{t>0} \CH_t(\C^n).$

\begin{thm} An entire function $ F $ on $\C^n$ belongs to $ \CH_\infty(\C^n) $ if and only if its restriction $f$ to $\R^n$ satisfies the Pfannschmidt condition (the condition stated in Theorem 1.3) with $a = b = 1$. Consequently, there exists $ G \in \CH_\infty(\C^n) $ such that $ G = \hat{f}$ on $ \R^n.$ Moreover, both $ F(z) e^{\frac{1}{2}z^2} $ and $G(z)e^{\frac{1}{2}z^2}$ are entire functions on $\C^n$ of order 2 and at most of minimal type.
\end{thm}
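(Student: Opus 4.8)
The plan is to identify $\CH_\infty(\C^n)$ via the chain of reformulations set up in the preceding theorems, and then translate the Pfannschmidt-type decay conditions for $\hat{f}$ into the Hermite-Bergman picture. First I would recall from Theorem 1.3 that $\CH_\infty(\C^n) = \cap_{t>0}\CH_t(\C^n)$, so by the intersection case of Theorem 4.2 (the $\lambda = 1$ version of Theorem 1.3) we have that $F \in \CH_\infty(\C^n)$ iff $f = F|_{\R^n}$ lies in $\cap_{t>0} E_t^1$, which is the range of $\cap_{t>0}\CA_t^1$ under $\pi_1$ restricted appropriately — equivalently, $f$ is a fixed vector annihilated by suitable decay estimates on both $f$ and $\hat f$. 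The point is that membership in $\CH_t(\C^n)$ forces, via the explicit kernel of the Hermite semigroup $e^{-tH}$ (a Mehler-type kernel), decay of the form $|f(x)| \le C_t e^{-\frac12 \tanh(t) x^2}$ and a matching decay for $\hat f$; letting $t \to \infty$ (so $\tanh t \to 1$) gives estimates of Pfannschmidt type with $a = b = 1$, the subexponential prefactors $P, Q$ absorbing the polynomial-in-$x$ factors and the $C_t$ blow-up. Conversely, a function satisfying the Pfannschmidt condition with $a=b=1$ satisfies, for every $t$, the slightly weaker Gaussian bound with $\tanh t < 1$ needed to sit inside $E_t^1$, hence inside every $\CH_t(\C^n)$.

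The second assertion — existence of $G \in \CH_\infty(\C^n)$ with $G = \hat f$ on $\R^n$ — follows from the symmetry of the Pfannschmidt condition under Fourier transform when $a = b = 1$: part (ii) of Theorem 1.3 tells us $\hat f(z) = S(z) e^{-\frac12 z^2}$ with $S$ entire of order $2$ and minimal type, and this $\hat f$ extends to an entire function on $\C^n$ satisfying exactly the same pair of estimates (with the roles of $f$ and $\hat f$ swapped, which is the same condition), so $\hat f \in \cap_{t>0} E_t^1$, and the corresponding $G \in \CH_\infty(\C^n)$ is its Hermite-Bergman extension. For the final claim about $F(z)e^{\frac12 z^2}$ and $G(z)e^{\frac12 z^2}$: this is precisely the conclusion of part (ii) of Theorem 1.3, namely $f(z) = R(z) e^{-\frac12 z^2}$ with $R = F(z)e^{\frac12 z^2}$ entire of order $2$ and at most minimal type, and likewise $S = G(z) e^{\frac12 z^2}$.

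The main obstacle I anticipate is the careful bookkeeping in the equivalence "$f$ satisfies Pfannschmidt with $a=b=1$" $\iff$ "$f \in \cap_t E_t^1$". In one direction one must show that the subexponential prefactors $P(x)$ (with $\log P(x) = o(|x|^2)$) do not obstruct membership in $E_t^1$ for each finite $t$: since $\tanh t < 1$ strictly, the gap between $e^{-\frac12 x^2}$ and $e^{-\frac12 \tanh(t) x^2}$ is a genuine Gaussian $e^{-\frac12(1-\tanh t)x^2}$ that dominates any $P(x)$, so this should go through, but it requires invoking the precise characterization of $E_t^\lambda$ (as restrictions of $\CH_t^\lambda(\C^n)$) and checking that the growth/type conditions on the entire extension are met — essentially re-deriving that $R(z) e^{-\frac12 z^2} \in \CH_t(\C^n)$ when $R$ is entire of order $2$ and minimal type. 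In the other direction, extracting the sharp $\tanh t \to 1$ limit and the "minimal type" refinement from the family of Bergman-norm bounds is where Theorem 4.2 and the Vemuri-type Hermite-coefficient decay (Theorem 4.7) do the real work; I would lean on those rather than reprove them. Everything else is assembling these pieces.
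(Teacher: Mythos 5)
Your overall strategy is the one the paper intends: Theorem 1.4 is presented there as an ``immediate consequence'' of Theorem 1.3 together with the Hermite--Bergman machinery of Section 4, and your two directions (Mehler-kernel estimates for $\CH_\infty\Rightarrow$ Pfannschmidt, and Hermite-coefficient decay for the converse) are the right ingredients. But one step, as written, is a genuine gap. The sentence ``a function satisfying the Pfannschmidt condition with $a=b=1$ satisfies, for every $t$, the slightly weaker Gaussian bound with $\tanh t<1$ needed to sit inside $E_t^1$'' is not a proof of the converse: Hardy-type bounds $|f(x)|\le Ce^{-\frac12\tanh(2t)x^2}$, $|\hat f(\xi)|\le Ce^{-\frac12\tanh(2t)\xi^2}$ at a \emph{single} level $t$ are not known to place $f$ in $E_t$ --- that is essentially the open conjecture recorded in Remark 4.8; Theorem 4.7 only yields $|(f,\Phi_\alpha)|\le C_{n,t}\prod_{j=1}^n(2\alpha_j+1)^{-1/4n}e^{-(2|\alpha|+n)t/2n}$, i.e.\ $f\in E_s$ only for $s<t/2n$. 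The argument survives because the Pfannschmidt hypothesis gives the Hardy bound for \emph{every} $t>0$, so the $1/(2n)$ loss is absorbed and one still obtains $|(f,\Phi_\alpha)|\le C(s)e^{-(2|\alpha|+n)s}$ for all $s>0$, hence $f\in e^{-sH}L^2(\R^n)$ for every $s$; this is exactly the computation in the paper's proof of Theorem 1.3, and that is what you should invoke rather than bare Gaussian domination. Your alternative repair --- use Theorem 1.3(ii) to write $f(z)=R(z)e^{-\frac12 z^2}$ with $R$ of order $2$ and at most minimal type, then verify $\int|f(x+iy)|^2U_t(x,y)\,dx\,dy<\infty$ for every $t$ --- also works, but note that domination of $P(x)$ on the real axis alone can never suffice: membership in $\CH_t(\C^n)$ is a statement about the entire extension, so you must control $|f(x+iy)|$ in the imaginary directions as well, which is where the order--$2$, minimal-type information enters.

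Two smaller points. The attributions in your final paragraph are swapped: Theorem 4.7 does the real work in the direction Pfannschmidt $\Rightarrow\CH_\infty$, whereas $\CH_\infty\Rightarrow$ Pfannschmidt is the elementary direction (Cauchy--Schwarz against the Mehler kernel gives $|e^{-tH}\varphi(x)|\le C\|\varphi\|_2e^{-\frac12\tanh(2t)x^2}$, and Fourier invariance of each $\CH_t$ gives the matching bound on $\hat f$). Also, ``the intersection case of Theorem 4.2'' should read Theorem 1.2, and the relevant parameter is $\tanh(2t)$ rather than $\tanh t$, consistent with the weight $U_t$. With these corrections the proof is complete and coincides with the paper's (essentially unwritten) argument.
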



\section{Preliminaries and smooth vectors}

We look at the Heisenberg group $ \H^n = \R^n \times \R^n \times \R $ with group law
$$(x,u,t)(y,v,s) = (x+y,u+v, t+s+\frac{1}{2}(u \cdot y - v \cdot x)).$$ For this group we have a family of irreducible unitary representations $ \pi_\lambda $ parameterized by non-zero reals $ \lambda $ called the Schr\"odinger representations. All of them are realised on the same Hilbert space, namely $ L^2(\R^n) $ and given explicitly by
$$ \pi_\lambda(x,u,t)\varphi(\xi) = e^{i \lambda t} e^{i \lambda (x \cdot \xi + \frac{1}{2}x \cdot u)} \varphi(\xi+u) $$ where $ \varphi \in L^2(\R^n).$ When $t=0$, we write $\pi_\lambda(x,u)$ for $\pi_\lambda(x,u,0)$. The integrated representations on $ L^1(\H^n) $ are also denoted by the same symbol, namely $ \pi_\lambda .$ Given an integrable function $ f $ on $ \H^n $ we see that $ \pi_\lambda(f) = W_\lambda(f^\lambda) $ where
$$ f^\lambda(x,u) = \int_\R f(x,u,t) e^{i\lambda t} dt$$
is the inverse Fourier transform of $ f $ at $ \lambda $ in the last variable and
$$ W_\lambda(f^\lambda) = \int_{\R^{2n}} f^\lambda(x,u) \pi_\lambda(x,u) dx du $$
is known as the Weyl transform of $ f^\lambda.$ For $f, g \in L^1(\H^n)$ the convolution of f with g is defined by
$$(f \ast g)(x,u,t) = \int_{\H^n} f((x,u,t)(-y,-v,-s))g(y,v,s)dy dv ds.$$
It follows that
$$ (f \ast g)^\lambda = f^\lambda \ast_\lambda g^\lambda $$
where $\ast_\lambda$ is called the $\lambda$-twisted convolution and is defined for functions $F,G$ on $\R^{2n}$ by
$$ (F \ast_\lambda G)(x,u) = \int_{\R^{2n}} F(x-y,u-v)G(y,v) e^{i\frac{\lambda}{2} (u \cdot y - v \cdot x)} dy dv.$$
We then have
$$ W_\lambda((f \ast g)^\lambda)) = W_\lambda(f^\lambda \ast_\lambda g^\lambda) = W_\lambda(f^\lambda)W_\lambda(g^\lambda).$$
For $\lambda = 1$ we write $ \pi $ and $ W $ instead of $ \pi_1 $ and $ W_1$, and denote the twisted convolution by $\times.$

It is well known that the space of smooth vectors for $ \pi $ is precisely the space $ \CS(\R^n) $ of Schwartz functions. The following result which can be considered as a representation theorem for smooth vectors was proved in \cite{GZ}.

\begin{thm}[Gr\"ochenig-Zimmermann]A function $ \psi $ is in $\CS(\R^n) $ if and only if it can be written as $ \psi = W(F)\varphi$ for some $\varphi \in \CS(\R^n)$ where $ F $ satisfies
$$ |F(x,u)| \leq C_m (1+x^2+u^2)^{-m} $$
for all $ m \in \N.$
\end{thm}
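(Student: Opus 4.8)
The plan is to reduce both implications to the explicit integral-kernel form of the Weyl transform. Carrying out the $x$-integration in
\[
W(F)\varphi(\xi)=\int_{\R^{2n}}F(x,u)\,e^{i(x\cdot\xi+\frac12 x\cdot u)}\varphi(\xi+u)\,dx\,du
\]
produces $G(\xi+\tfrac u2,u)$, where $G(y,u):=\int_{\R^n}F(x,u)e^{ix\cdot y}\,dx$ is the partial Fourier transform $\CF_1F$ of $F$ in its first variable, and the substitution $\eta=\xi+u$ then gives
\[
W(F)\varphi(\xi)=\int_{\R^n}G\Big(\tfrac{\xi+\eta}{2},\,\eta-\xi\Big)\varphi(\eta)\,d\eta .
\]
The elementary fact I would record first is that the hypothesis $|F(x,u)|\le C_m(1+x^2+u^2)^{-m}$ for all $m$ — which is preserved after multiplying $F$ by any polynomial in $x$ — forces, for every multi-index $\alpha$ and every $m$, a bound $\sup_y|\partial_1^\alpha G(y,u)|\le C_{m,\alpha}(1+u^2)^{-m}$, since $\partial_1^\alpha G=\CF_1\big((ix)^\alpha F\big)$ and $\int_{\R^n}(1+x^2+u^2)^{-m}\,dx\le C_m(1+u^2)^{n/2-m}$. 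Thus $G$ is smooth in its first slot with all derivatives bounded and is rapidly decreasing in its second slot, uniformly in the first; these are the only properties used, and none of them needs $F$ smooth.

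For the ``if'' direction I would show $\psi:=W(F)\varphi\in\CS(\R^n)$ whenever $\varphi\in\CS(\R^n)$. From $\psi(\xi)=\int_{\R^n}G(\xi+\tfrac u2,u)\varphi(\xi+u)\,du$ one may differentiate under the integral sign (legitimate by the uniform $u$-decay of $G$ and the rapid decay of $\varphi$), so each $\partial_\xi^\alpha\psi(\xi)$ is a finite linear combination of integrals $\int_{\R^n}(\partial_1^\beta G)(\xi+\tfrac u2,u)\,(\partial^\gamma\varphi)(\xi+u)\,du$ with $\beta+\gamma=\alpha$. To bound $\xi^\delta\partial_\xi^\alpha\psi(\xi)$ I would use $|\xi|^{|\delta|}\le C\big(|\xi+u|^{|\delta|}+|u|^{|\delta|}\big)$: the contribution of $|u|^{|\delta|}$ is controlled by the uniform rapid $u$-decay of $\partial_1^\beta G$ times $\|\partial^\gamma\varphi\|_{L^1}$, and the contribution of $|\xi+u|^{|\delta|}$ by $\sup\big|\,|\cdot|^{|\delta|}\partial^\gamma\varphi\,\big|$ times $\int_{\R^n}\sup_y|\partial_1^\beta G(y,u)|\,du<\infty$. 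Both estimates are uniform in $\xi$, so every Schwartz seminorm of $\psi$ is finite.

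For the ``only if'' direction, given $\psi\in\CS(\R^n)$ I would fix any $\varphi\in\CS(\R^n)$ with $\|\varphi\|_{L^2}=1$ and construct $F$ so that $W(F)$ is the rank-one operator $g\mapsto\langle g,\varphi\rangle\psi$. By the kernel identity above this amounts to choosing $G$ with $G\big(\tfrac{\xi+\eta}{2},\eta-\xi\big)=\psi(\xi)\overline{\varphi(\eta)}$, i.e. $G(a,b)=\psi\big(a-\tfrac b2\big)\,\overline{\varphi\big(a+\tfrac b2\big)}$, which manifestly lies in $\CS(\R^{2n})$; then $F$, recovered from $G$ by the inverse Fourier transform in the first variable, lies in $\CS(\R^{2n})$, so in particular $|F(x,u)|\le C_m(1+x^2+u^2)^{-m}$ for all $m$, while $W(F)\varphi(\xi)=\psi(\xi)\int_{\R^n}|\varphi|^2=\psi(\xi)$, as required.

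I do not expect a serious obstacle here: the statement is essentially the Weyl-transform transcription of the classical fact that the smoothing operators on $L^2(\R^n)$ are exactly those with Schwartz kernels, the point being that the $F$'s in question are precisely those whose image under $\CF_1$ is smooth and rapidly decaying in the appropriate slots. The one place requiring genuine care is the ``if'' direction, where $F$ is only assumed to decay and not to be smooth, so the seminorm estimates for $W(F)\varphi$ must be organised so as to use only the first-variable smoothness and second-variable decay of $G=\CF_1F$, both of which do follow from the decay hypothesis on $F$.
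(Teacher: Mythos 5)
Your argument is correct, but note that the paper does not actually prove this statement: it is quoted from Gr\"ochenig--Zimmermann \cite{GZ} and used as motivation for Theorem 2.2, whose proof is the closest in-paper analogue. That proof has the same two-pronged shape as yours --- a rank-one construction for the ``only if'' direction (there phrased via Moyal's formula as $\varphi = (f,\varphi)^{-1}W\big(V(\widetilde{\varphi},\widetilde{\varphi})\big)f$, which is exactly your kernel $G(a,b)=\psi(a-\tfrac b2)\overline{\varphi(a+\tfrac b2)}$ in disguise) and the integral-kernel representation of $W(F)$ for the ``if'' direction. The genuine point of difference is that Theorem 2.2 assumes $F\in\CS(\R^{2n})$, so the paper can invoke invariance of $\CS$ under partial Fourier transforms and treat the kernel as a full Schwartz function, whereas the present statement assumes only polynomial decay of $F$ of all orders, with no smoothness. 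You correctly isolate what survives: $\CF_1F$ is smooth in its first slot with bounded derivatives and rapidly decreasing in its second slot uniformly in the first, and you organise the seminorm estimates for $W(F)\varphi$ so that only these two properties (plus $\varphi\in\CS(\R^n)$, which the ``if'' direction of the statement does grant you) are used. That is precisely the extra care this version of the theorem requires, and your splitting $|\xi|^{|\delta|}\lesssim|\xi+u|^{|\delta|}+|u|^{|\delta|}$ handles it cleanly. What your route buys is a self-contained proof of the cited result under its actual (weaker) hypotheses; what the paper's route buys, in Theorem 2.2, is the stronger conclusion that $\varphi$ may be taken from all of $L^2(\R^n)$ at the price of requiring $F$ to be Schwartz.
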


Let $ e^{itH} $ and $ e^{itL} $ be the unitary groups generated by the Hermite and special Hermite operators $ H = -\Delta+|x|^2 $ and $ L = -\Delta+\frac{1}{4}|z|^2 - i\sum_{j=1}^n (x_j\frac{\partial}{\partial y_j} - y_j\frac{\partial}{\partial x_j}) $ respectively. Then it can be easily verified that $ W(e^{itL}F) = W(F)e^{itH}.$ Moreover, it is well known that $ e^{-i\frac{\pi} {4}H } = e^{-i \frac{n \pi}{4}} \CF $ and $e^{-i\frac{\pi}{2}L} = e^{-i \frac{n \pi}{2}} \CF_s$ where $\CF$ is the Fourier transform on $ \R^n $ and $ \CF_s $ is the symplectic Fourier transform on $ \R^{2n}.$ In view of these facts, it follows that $ W(F)\varphi $ is a Schwartz function if either $ F $ or its symplectic Fourier transform satisfies the decay conditions in the above theorem. We would also like to strengthen the above result by taking $ \varphi $ from $ L^2(\R^n).$ In other words we look for an algebra $ \CA $ which is invariant under the symplectic Fourier transform for which $ W(\CA)L^2(\R^n) = \CS(\R^n).$

For a radial function $ F $ on $ \R^{2n} $, consider the Laguerre expansion
$$ F = \sum_{k=0}^\infty \frac{2^{1-n} k!}{(k+n-1)!} \left( \int_0^\infty F(s) \varphi_k^{n-1}(s) s^{2n-1} ds \right) \varphi_k^{n-1} $$
where $ \varphi_k^{n-1}(s) = L_k^{n-1}(\frac{1}{2}s^2) e^{-\frac{1}{4}s^2}$ are the Laguerre functions of type $ (n-1).$ Now if we make use of the relation $W(\varphi_k^{n-1}) = (2\pi)^n P_k $ (see Theorem 1.3.6 in \cite{Th2}) where $P_k$ are the orthogonal projections associated to the eigenspaces of the Hermite operator $ H $, we see that the Weyl transform $ W(F) $ reduces to the Laguerre transform and we have
$$ W(F) = c_n \sum_{k=0}^\infty \frac{k!}{(k+n-1)!} \left(\int_0^\infty F(s) \varphi_k^{n-1}(s) s^{2n-1} ds \right) P_k .$$
In view of the above formula for radial functions $F$, $ W(F)\varphi $ is Schwartz for all $ \varphi \in L^2(\R^n) $ if and only if $ F $ is Schwartz. In fact, this turns out to be the case even for arbitrary functions.

\begin{thm} The Schwartz space $ \CS(\R^{2n}) $ is an algebra under twisted convolution and $ W(\CS(\R^{2n}))L^2(\R^n) = \CS(\R^n)$ holds.
\end{thm}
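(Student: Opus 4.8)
The plan is to prove both assertions by realising $W(F)$, for $F \in \CS(\R^{2n})$, as an integral operator with a Schwartz kernel. First I would record that, directly from the explicit formula for $\pi$ and the substitution $\eta = \xi+u$,
\be
W(F)\varphi(\xi) = \int_{\R^n} K_F(\xi,\eta)\,\varphi(\eta)\, d\eta,\qquad K_F(\xi,\eta) = \int_{\R^n} F\big(x,\eta-\xi\big)\, e^{i x\cdot\frac{\xi+\eta}{2}}\, dx,
\ee
so that $K_F$ is, up to the invertible linear change of variables $(\xi,\eta)\mapsto\big(-\frac{1}{2}(\xi+\eta),\,\eta-\xi\big)$, the partial Fourier transform $\CF_1 F$; hence $F\mapsto K_F$ is a linear bijection of $\CS(\R^{2n})$ onto itself. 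Two things follow. If $K\in\CS(\R^{2n})$ and $\varphi\in L^2(\R^n)$, then differentiating under the integral and applying Cauchy--Schwarz gives $\big|\xi^\alpha\partial_\xi^\beta\!\int K(\xi,\eta)\varphi(\eta)\,d\eta\big| \le \|\varphi\|_2\big(\int|\xi^\alpha\partial_\xi^\beta K(\xi,\eta)|^2 d\eta\big)^{1/2}$, and the right side is bounded uniformly in $\xi$ because $K$ is Schwartz (using $(1+|\xi|+|\eta|)^2\ge (1+|\xi|)(1+|\eta|)$ to split the decay); this yields $W(\CS(\R^{2n}))L^2(\R^n)\subseteq\CS(\R^n)$. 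And the identity $W(F\times G) = W(F)W(G)$ becomes, on kernels, $K_{F\times G}(\xi,\zeta) = \int K_F(\xi,\eta)K_G(\eta,\zeta)\,d\eta$; the composition of two Schwartz kernels is again a Schwartz kernel (an easy consequence of their rapid decay, via the same splitting), so by the bijection above $F\times G\in\CS(\R^{2n})$, i.e.\ $\CS(\R^{2n})$ is an algebra under twisted convolution.

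For the reverse inclusion $\CS(\R^n)\subseteq W(\CS(\R^{2n}))L^2(\R^n)$ I would invoke the reproducing identity coming from the square-integrability of $\pi$. Fix a Gaussian $\phi\in\CS(\R^n)$ with $\|\phi\|_2 = 1$, and for $\psi\in\CS(\R^n)$ set
\be
\Phi(x,u) := \langle\psi,\pi(x,u)\phi\rangle = e^{-\frac{i}{2}x\cdot u}\int_{\R^n}\psi(\xi)\,\overline{\phi(\xi+u)}\,e^{-i x\cdot\xi}\, d\xi .
\ee
Since $(\xi,u)\mapsto\psi(\xi)\overline{\phi(\xi+u)}$ is Schwartz on $\R^{2n}$ (being $\psi\otimes\bar\phi$ composed with an invertible linear map), its partial Fourier transform in $\xi$ is Schwartz in $(x,u)$, and multiplying by the unimodular factor $e^{-\frac{i}{2}x\cdot u}$ keeps it so (each derivative only brings down a polynomial); hence $\Phi\in\CS(\R^{2n})$. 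The orthogonality relations for the Schr\"odinger representation, $\int_{\R^{2n}}\langle a,\pi(x,u)b\rangle\,\overline{\langle c,\pi(x,u)d\rangle}\,dx\,du = (2\pi)^n\langle a,c\rangle\,\overline{\langle b,d\rangle}$, then give, after pairing with an arbitrary $\chi\in L^2(\R^n)$,
\be
W(\Phi)\varphi = (2\pi)^n\,\langle\varphi,\phi\rangle\,\psi \qquad\text{for all }\varphi\in L^2(\R^n).
\ee
Taking $\varphi = \phi$ yields $\psi = W\big((2\pi)^{-n}\Phi\big)\phi$ with $(2\pi)^{-n}\Phi\in\CS(\R^{2n})$ and $\phi\in\CS(\R^n)\subset L^2(\R^n)$, which proves the reverse inclusion and hence $W(\CS(\R^{2n}))L^2(\R^n) = \CS(\R^n)$.

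All the computations are routine once two structural facts are isolated: (a) $W(F)$ is an integral operator with Schwartz kernel whenever $F\in\CS(\R^{2n})$, that kernel being a partial Fourier transform of $F$ up to a linear change of variables; and (b) the matrix coefficient $(x,u)\mapsto\langle\psi,\pi(x,u)\phi\rangle$ is Schwartz on $\R^{2n}$ for $\psi,\phi\in\CS(\R^n)$. The main obstacle I anticipate is bookkeeping rather than conceptual: one must repeatedly verify that the quadratic and linear phase factors carried by $\pi$ (and by the twisted convolution) do not destroy the Schwartz property, and --- crucially for the Cauchy--Schwarz estimates above --- that every Schwartz seminorm is controlled \emph{uniformly} in the remaining variable; this is exactly what the linear change of variables relating $K_F$ to $\CF_1 F$ makes transparent. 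The orthogonality relations for $\pi$ are classical and may simply be quoted.
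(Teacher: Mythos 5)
Your proof is correct and follows essentially the same route as the paper: the inclusion $W(\CS(\R^{2n}))L^2(\R^n)\subseteq\CS(\R^n)$ is obtained exactly as in the paper by writing $W(F)$ as an integral operator whose kernel is $\CF_1 F$ composed with an invertible linear change of variables, and your use of the orthogonality relations to produce $\psi=W\big((2\pi)^{-n}\Phi\big)\phi$ is precisely Moyal's formula for the Fourier--Wigner transform (your matrix coefficient $\langle\psi,\pi(x,u)\phi\rangle$ is the paper's $V(\cdot,\cdot)$ up to normalisation), which is what the paper uses for the reverse inclusion. The one genuine addition is your treatment of the algebra property --- deducing $F\times G\in\CS(\R^{2n})$ from the bijectivity of $F\mapsto K_F$ on $\CS(\R^{2n})$ together with the fact that Schwartz kernels compose to Schwartz kernels --- which is a clean way of carrying out what the paper dismisses as a ``simple exercise.''
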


\begin{proof} Verifying that the Schwartz space $ \CS(\R^{2n}) $ is an algebra under twisted convolution is a simple exercise. Let us recall Moyal's formula for the Fourier-Wigner transform (\cite{Th2}) which says that for any $\varphi_i \in L^2(\R^n),~ i=1,2,3,4$,
$$\big(V(\varphi_1,\varphi_2), ~V(\varphi_3,\varphi_4) \big)
=(\varphi_1,\varphi_3) \overline{(\varphi_2,\varphi_4)}.$$
An easy calculation using Moyal's formula shows that
$$ W\big(V(\varphi_1,\varphi_2)\big)~\varphi_3 = (\varphi_3,\widetilde{\varphi_2}) \widetilde{\varphi_1}$$
where $\widetilde{\varphi}_i(x) = \varphi_i(-x)$. Now for $\varphi \in \CS(\R^n)$, choose any $f \in L^2(\R^n)$ such that $(f,\varphi) \neq 0$ and then by the above relation
$$ \varphi = \frac{1}{(f,\varphi)}W\big(V(\widetilde{\varphi},
\widetilde{\varphi}) \big)~f. $$
Now
\Bea V(\widetilde{\varphi}, \widetilde{\varphi})(x,u) &=& (2\pi)^{-\frac{n}{2}} \int_{\R^n} e^{i(x \cdot \xi + \frac{1}{2} x \cdot u)} \varphi(-\xi - u)~ \overline{\varphi(-\xi)} d\xi \\
&=& (2\pi)^{-\frac{n}{2}} \int_{\R^n} e^{ix \cdot \xi} \varphi(-\xi - \frac{u}{2})~ \overline{\varphi(-\xi + \frac{u}{2})} d\xi.
\Eea
$\varphi$ being in $\CS(\R^n)$ it follows that $ \varphi(-\xi - \frac{u}{2})~ \overline{\varphi(-\xi + \frac{u}{2})}$ belongs to $\CS(\R^{2n})$ as a function of $(\xi,u).$ Now we make use of the fact that Schwartz space is invariant under partial Fourier transforms to conclude that $V(\widetilde{\varphi}, \widetilde{\varphi}) \in \CS(\R^{2n})$. This proves $\CS(\R^n) \subseteq W(\CS(\R^{2n}))L^2(\R^n).$

On the other hand, take any $F \in \CS(\R^{2n})$ and recall that the Weyl transform can be written as the integral operator
$$ \big( W(F) \varphi \big)(\xi) = (2\pi)^{\frac{n}{2}} \int_{\R^n} (\mathcal{F}_1 F) \big( \frac{-u-\xi}{2}, u-\xi \big)~ \varphi(u)du$$
for $\varphi \in L^2(\R^n)$. Since the Schwartz space is invariant under partial Fourier transforms, it follows easily that the kernel $(\mathcal{F}_1 F) \big( \frac{-u-\xi}{2}, u-\xi \big)$ is in Schwartz class as a function of $(u,\xi).$ Since the partial derivatives of $\big( W(F) \varphi \big)(\xi)$ are also integral operators having finite linear combinations of the partial derivatives of $(\mathcal{F}_1F) \big( \frac{-u-\xi}{2}, u-\xi \big)$ as their kernels, it is enough to check that $\big( W(F) \varphi \big)(\xi)$ with Schwartz class kernel decays faster than the inverse of any polynomial. This follows directly if we apply the Cauchy-Schwarz inequality to the integral.
\end{proof}


\section{ A representation theorem for analytic vectors}
\setcounter{equation}{0}

We let $ V $ stand for $ L^2(\R^n) $ so that $ V^\omega $ will denote the space of all analytic vectors for $ \pi.$ If we denote by $ \CM_t^\omega = \CM_t^\omega(\R^n) $ the image of $ V $ under the Poisson-Hermite semigroup $ e^{-t\sqrt{H}} $, then it is well known that $ V^\omega = \cup_{t>0} \CM_t^\omega.$ One way to see this is to make use of Gutzmer's formula for the Hermite expansions proved in \cite{Th1}.

Recall that $ \varphi \in V^\omega $ if and only if the map $(x,u) \rightarrow \pi(x,u)\varphi $ is real analytic from $\R^{2n} $ into $ V $ and hence extends as a holomorphic function to a tube domain in $ \C^{2n} $ such that
$$ \int_{\R^n} |(\pi(x+iy,u+iv)\varphi)(\xi)|^2 d\xi < \infty$$
for all $ (y,v) $ with $ y^2+v^2 < t^2 $ for some $ t> 0.$ If the above is true, then we have the formula (see \cite{Th1})
$$ \int_{\R^n} \int_{K}|\pi(k \cdot (x+iy,u+iv)) \varphi(\xi)|^2 dk d\xi $$
$$ = e^{(u\cdot y-v\cdot x)}\sum_{k=0}^\infty \|P_k\varphi\|_2^2 \frac{k!(n-1)!} {(k+n-1)!} \varphi_k^{n-1}(2iy,2iv).$$
In the above formula, $ K $ stands for the compact group 
$ Sp(n,\R)\cap O(2n,\R), $ and $ P_k $ are the orthogonal projections associated to the eigenspaces of the Hermite operator $ H $ and $ \varphi_k^{n-1}(y,v) = L_k^{n-1}(\frac{1}{2}(y^2+v^2)) e^{-\frac{1}{4}(y^2+v^2)} $ are the Laguerre functions of type $ (n-1).$

The asymptotic behavior of the Laguerre functions $ \varphi_k^{n-1} $ in the complex domain is well known. From Theorem 8.22.3 (Page 199, Szego \cite{Sz}) we see that $\varphi_k^{n-1}(2iy,2iv) $ behaves like $e^{2\sqrt{(2k+n)}(y^2+v^2)^{\frac{1}{2}}}$ and hence our claim is proved. The above Gutzmer's formula also suggests that we look at the space of functions $ \varphi \in V $ for which
$$ \sum_{k=0}^\infty \|P_k\varphi\|_2^2 \frac{k!(n-1)!} {(k+n-1)!} \varphi_k^{n-1}(2iy,2iv) < \infty $$
for all $ y^2+v^2 \leq t^2.$ It turns out that this space, which we denote by $ V_t^\omega, $ is a reproducing kernel Hilbert space.

Indeed, let $ w_t(x,y) $ stand for the weight function
$$ w_t(x,y) = (t^2-y^2)_{+}^{\frac{n}{2}} \frac{J_{\frac{n}{2}-1} (2i|x|(t^2-y^2)^{\frac{1}{2}})} {(2i|x|(t^2-y^2)^ {\frac{1}{2}})^{\frac{n}{2}-1}}$$
and consider the weighted Bergman space of holomorphic functions in the tube domain $ \Omega_t = \{ x+iy \in \C^n: |y| < t \} $ for which
$$ \int_{\Omega_t} |F(x+iy)|^2 w_t(x,y) dx dy < \infty.$$
This is a reproducing kernel Hilbert space and we have

\begin{thm} The above weighted Bergman space coincides with $ V_t^\omega $ with equality of norms. That is to say, $ \varphi \in V_t^\omega$ if and only if $ \varphi $ extends to $ \Omega_t $ as a holomorphic function and
$$ \int_{\Omega_t} |\varphi(x+iy)|^2 w_t(x,y) dx dy $$
$$ = c_n \sum_{k=0}^\infty \|P_k\varphi\|_2^2 \frac{k!(n-1)!} {(k+n-1)!} L_k^{n-1}(-2t^2) e^{t^2}.$$
\end{thm}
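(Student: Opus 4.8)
The plan is to identify the weighted Bergman space with $V_t^\omega$ by computing the reproducing kernel of the Bergman space explicitly and matching it, term by term in the Hermite decomposition, with the kernel coming from Gutzmer's formula. The starting point is the observation that both spaces are invariant under the action of $K = Sp(n,\R)\cap O(2n,\R)$, so it suffices to work with the Hermite projections $P_k$ and reduce everything to a one-variable radial computation. Concretely, I would first expand a holomorphic $F$ on $\Omega_t$ in a Hermite series $F = \sum_k P_k F$ (the $P_k F$ being finite-dimensional, hence entire), and aim to show
\[
\int_{\Omega_t} |F(x+iy)|^2 w_t(x,y)\, dx\, dy = c_n \sum_{k=0}^\infty \|P_k F\|_2^2 \frac{k!(n-1)!}{(k+n-1)!} L_k^{n-1}(-2t^2) e^{t^2},
\]
which simultaneously establishes that $F$ is in the Bergman space iff the right side is finite (i.e. iff $F = \varphi \in V_t^\omega$) and gives the asserted equality of norms.

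The key computational step is to show that the Hermite functions $\Phi_\alpha$ (suitably restricted/extended to $\Omega_t$) are orthogonal in the Bergman inner product, and that
\[
\int_{\Omega_t} |\Phi_\alpha(x+iy)|^2 w_t(x,y)\, dx\, dy = c_n \frac{k!(n-1)!}{(k+n-1)!} L_k^{n-1}(-2t^2) e^{t^2}
\]
depends only on $k = |\alpha|$. For this I would pass to the Fourier-Wigner / Laguerre picture: the weight $w_t$ is designed precisely so that integrating against it against a product of Hermite functions reproduces the analytic continuation $\varphi_k^{n-1}(2iy, 2iv)$ appearing in Gutzmer's formula. The Bessel function $J_{n/2-1}$ in $w_t$ is the Fourier transform (in the $y$-variable, over the ball $|y|<t$) of the measure that, combined with the Gaussian factor hidden in $|\Phi_\alpha(x+iy)|^2 = |\Phi_\alpha(x)|^2 e^{\text{something}(y)}$, yields the Laguerre value $L_k^{n-1}(-2t^2) e^{t^2}$. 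I would carry this out by first writing $|\Phi_\alpha(x+iy)|^2$ explicitly using the generating function or the known formula $\Phi_\alpha(x+iy) = \Phi_\alpha(x) e^{\langle \text{linear}\rangle}\times(\cdots)$, integrating in $x$ over $\R^n$ first to collapse the $\alpha$-dependence into $k=|\alpha|$ via the Laguerre addition formula $\sum_{|\alpha|=k} \Phi_\alpha \otimes \Phi_\alpha$, and then integrating in $y$ over $|y|<t$ against $w_t(x,y)\,$, where the radial Bessel factor does the remaining work.

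I expect the main obstacle to be this last radial integral: one must recognize
\[
\int_0^t (t^2 - r^2)^{n/2} \frac{J_{n/2-1}\!\big(2i\rho r (t^2-r^2)^{1/2}\big)}{(2i\rho r(t^2-r^2)^{1/2})^{n/2-1}} \,(\text{Gaussian/Laguerre in } r)\, r^{n-1}\,dr
\]
as producing exactly $L_k^{n-1}(-2t^2) e^{t^2}$, which amounts to a Hankel-transform identity for Laguerre functions. I would look this up in Szegő \cite{Sz} or derive it from the generating function for $L_k^{n-1}$ together with the standard formula $\int_0^1 (1-s)^{\mu} J_\nu(a\sqrt{1-s}) (\cdots)\,ds$. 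Once this identity is in hand, the convergence/finiteness equivalence, the holomorphic extension, and the density needed to pass from Hermite functions to general $\varphi$ are routine: the reproducing-kernel structure of the Bergman space guarantees that the orthonormal system $\{\Phi_\alpha\}$ (rescaled) is complete, so Parseval in the Bergman space is precisely the displayed identity, and the equality $V_t^\omega = \text{(Bergman space)}$ with equality of norms follows immediately from comparing the two Parseval expansions.
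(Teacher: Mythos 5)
First, a point of reference: the paper does not actually prove this statement --- immediately after the theorem it says ``This theorem has been proved in \cite{Th3} (Theorem 6.5 in \cite{Th3})''. The intended argument there is not a term-by-term Hermite computation but a direct integration of Gutzmer's formula (displayed just before the theorem in Section 3): one sets $x=u=0$, integrates the identity over the spheres $y^2+v^2=s^2$ (this is the $\int_K$) and then in $s$ against a suitable density. The spherical average of the factor $e^{-2y\cdot\xi}$ coming from $|\pi(iy,iv)\varphi(\xi)|^2$ is exactly what produces the Bessel factor in $w_t$, and the corresponding integral of $\varphi_k^{n-1}(2iy,2iv)$ over the ball evaluates to $L_k^{n-1}(-2t^2)e^{t^2}$; the whole theorem reduces to one known Laguerre--Bessel integral.

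Your plan --- verify the identity on Hermite functions and extend by Parseval --- has a genuine gap at what you yourself call ``the key computational step''. Polarizing the asserted norm identity shows it is equivalent to $\int_{\Omega_t}\Phi_\alpha\overline{\Phi_\beta}\,w_t\,dx\,dy=c_n\,d_{|\alpha|}\,\delta_{\alpha\beta}$ with $d_k=\frac{k!(n-1)!}{(k+n-1)!}L_k^{n-1}(-2t^2)e^{t^2}$: full orthogonality of all the $\Phi_\alpha$ \emph{and} constancy of the diagonal on each Hermite eigenspace. Nothing in your proposal delivers this. The addition formula $\sum_{|\alpha|=k}\Phi_\alpha\otimes\Phi_\alpha$ only computes the trace of the quadratic form on each eigenspace, not the individual matrix entries; and the rotation invariance of $w_t$ kills off-diagonal terms only between inequivalent $O(n)$-isotypic components, whereas the $k$-th Hermite eigenspace is \emph{reducible} under $O(n)$ (it splits into spherical harmonics of degrees $k,k-2,\dots$), so a priori the form could take different constants on different components. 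What forces it to be a single multiple of the $L^2$ inner product on each eigenspace is invariance under the larger group $K=Sp(n,\R)\cap O(2n,\R)\cong U(n)$ acting through the metaplectic representation, under which each eigenspace is irreducible --- and that invariance is precisely the content of the $K$-average in Gutzmer's formula; it is not visible from the explicit formula for $w_t$. (Your factorization $|\Phi_\alpha(x+iy)|^2=|\Phi_\alpha(x)|^2e^{\cdots}$ is also false for individual $\alpha$.) Finally, in the converse direction you must show that a holomorphic $F$ with finite Bergman norm restricts to an $L^2$ function on $\R^n$ before $P_kF$ even makes sense, which needs a reproducing-kernel or pointwise estimate that the word ``routine'' does not supply. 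The efficient repair is to abandon the term-by-term computation and integrate the Gutzmer identity as above.
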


This theorem has been proved in \cite{Th3} (Theorem 6.5 in \cite{Th3}) and in view of this we obtain

\begin{thm} For the Schr\"odinger representation $ \pi $ we have
$$ V^\omega = \cup_{t>0} V_t^\omega = \cup_{t>0} \CM_t^\omega.$$
\end{thm}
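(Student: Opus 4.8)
The plan is to establish the two equalities $V^\omega = \cup_{t>0}\CM_t^\omega$ and $\cup_{t>0}\CM_t^\omega = \cup_{t>0}V_t^\omega$ by working through the chain of characterisations assembled above; the second of these is essentially Theorem 3.2 together with the mapping properties of the Poisson-Hermite semigroup, so the real content lies in identifying $V^\omega$ with $\cup_{t>0}V_t^\omega$. First I would recall the general fact that $\varphi \in V^\omega$ precisely when the orbit map $(x,u)\mapsto \pi(x,u)\varphi$ is real-analytic from $\R^{2n}$ into $V$, equivalently extends holomorphically to a tube $\{(x+iy,u+iv): y^2+v^2 < t^2\}$ for some $t>0$ with the $L^2(\R^n)$-norm of $(\pi(k\cdot(x+iy,u+iv))\varphi)$ locally bounded on that tube. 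Averaging over the compact group $K = Sp(n,\R)\cap O(2n,\R)$ and invoking Gutzmer's formula from \cite{Th1} (quoted in the excerpt) converts this analytic-extendability condition into the convergence of the Hermite series
$$ \sum_{k=0}^\infty \|P_k\varphi\|_2^2\,\frac{k!(n-1)!}{(k+n-1)!}\,\varphi_k^{n-1}(2iy,2iv) $$
for all $(y,v)$ with $y^2+v^2 \le t^2$. By definition this last condition says exactly $\varphi \in V_t^\omega$, so $\varphi \in V^\omega$ iff $\varphi \in V_t^\omega$ for some $t>0$, i.e. $V^\omega = \cup_{t>0}V_t^\omega$.

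Next I would dispense with $\cup_{t>0}V_t^\omega = \cup_{t>0}\CM_t^\omega$. Here the key input is the Szegő asymptotics (Theorem 8.22.3 of \cite{Sz}) stating that $\varphi_k^{n-1}(2iy,2iv)$ grows like $e^{2\sqrt{2k+n}\,(y^2+v^2)^{1/2}}$ as $k\to\infty$. Thus the Gutzmer series converges for all $|(y,v)|\le t$ if and only if $\|P_k\varphi\|_2$ decays like $e^{-2t\sqrt{2k}}$ up to subexponential factors, which is precisely the condition for $\varphi$ to lie in the range of the Poisson-Hermite semigroup $e^{-s\sqrt{H}}$ for an appropriate $s$ comparable to $t$: writing $\varphi = e^{-s\sqrt{H}}\psi$ with $\psi\in V$ is equivalent to $\sum_k e^{2s\sqrt{2k+n}}\|P_k\varphi\|_2^2 < \infty$. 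Matching the two exponents (with $s = 2t$ after unwinding the constant in the Szegő estimate) gives the set-theoretic equality of the two unions; since $\CM_t^\omega$ and $V_{t'}^\omega$ are nested in $t$, only the matching of exponential rates matters, not the precise constant. This step is routine once the asymptotic estimate is in hand.

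Finally the statement that each $V_t^\omega$ is a reproducing kernel Hilbert space isometric to the weighted Bergman space on the tube $\Omega_t$ is exactly Theorem 3.2, cited from \cite{Th3}, so nothing new is needed there; I would simply invoke it to give the concrete holomorphic-extension picture. The main obstacle, and the only place where care is genuinely required, is the first step: justifying the passage from the abstract analyticity of the orbit map to the quantitative Gutzmer series. One must check that the local $L^2$-boundedness of $\xi\mapsto(\pi(k\cdot(x+iy,u+iv))\varphi)(\xi)$ on the tube—integrated in $\xi$ and in $k\in K$—is genuinely equivalent to finiteness of the series for all $y^2+v^2\le t^2$, which requires knowing that the $K$-average loses no information (because the Laguerre functions $\varphi_k^{n-1}$ are the $K$-spherical functions attached to the Hermite projections $P_k$) and that convergence on the closed ball follows from convergence on a slightly larger open ball, handled by the monotonicity of $\varphi_k^{n-1}(2iy,2iv)$ in $|(y,v)|$ together with a standard shrink-the-radius argument. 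Everything else is bookkeeping with the already-quoted formulas.
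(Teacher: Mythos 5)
Your proposal is correct and follows essentially the same route as the paper, which likewise deduces the result from Gutzmer's formula (to identify $V^\omega$ with $\cup_{t>0}V_t^\omega$), the Szeg\H{o} asymptotics $\varphi_k^{n-1}(2iy,2iv)\sim e^{2\sqrt{2k+n}\,(y^2+v^2)^{1/2}}$ (to match the Gutzmer series with the Poisson--Hermite condition $\sum_k e^{2t\sqrt{2k+n}}\|P_k\varphi\|_2^2<\infty$ defining $\CM_t^\omega$), and the cited Theorem 3.1 for the Bergman-space picture. The only slips are cosmetic: the exponent match is $s=t$ rather than $s=2t$, but as you note the nesting of the spaces in $t$ makes the precise constant irrelevant for the equality of the unions.
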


As $ V_t^\omega $ are reproducing kernel Hilbert spaces, it is possible to characterise $ V^\omega $ in terms of pointwise estimates. Using the asymptotic properties of the Bessel function $J_{n/2-1}(z)$ one can prove the following result.

\begin{thm} A function $ \varphi \in L^2(\R^n) $ is an analytic vector for $ \pi $ if and only if for some $ t > 0 $ it extends to $ \Omega_t $ as a holomorphic function and satisfies the estimate
\bea \label{**} |\varphi(x+iy)| \leq C(t) e^{-|x|(t^2-y^2)^{\frac{1}{2}}},
~~~ \forall (x+iy) \in \Omega_t. \eea
More precisely, every vector $\varphi$ in $V_t^\omega$ satisfies (\ref{**}) for all $s<t$, and conversely any $ \varphi \in L^2(\R^n) $ satisfying (\ref{**}) for some $t>0$ belongs to $ V_s^\omega $ for every $s<t$.
\end{thm}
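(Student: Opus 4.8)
The plan is to read the estimate off Theorem 3.1 by combining the weighted Bergman description of $V_t^\omega$ with sharp two-sided bounds for the weight $w_t$, which come from the classical asymptotics of the Bessel function. Put $\nu = n/2-1$. For $x+iy\in\Omega_t$ the argument $2i|x|(t^2-|y|^2)^{1/2}$ is purely imaginary, so with $\rho:=2|x|(t^2-|y|^2)^{1/2}$ one has $w_t(x,y)=(t^2-|y|^2)^{n/2}\,\rho^{-\nu}I_\nu(\rho)$, where $I_\nu$ is the modified Bessel function. From $I_\nu(\rho)=(2\pi\rho)^{-1/2}e^{\rho}(1+O(\rho^{-1}))$ and the positivity/monotonicity of the power series $\rho^{-\nu}I_\nu(\rho)$, the ratio of $\rho^{-\nu}I_\nu(\rho)$ to $(1+\rho)^{-(n-1)/2}e^{\rho}$ is continuous and positive on $[0,\infty)$ with positive limits at $0$ and $\infty$; hence there are constants $0<c_n\le C_n$ with
$$ c_n\,(t^2-|y|^2)^{n/2}(1+\rho)^{-\frac{n-1}{2}}e^{\rho}\ \le\ w_t(x,y)\ \le\ C_n\,(t^2-|y|^2)^{n/2}(1+\rho)^{-\frac{n-1}{2}}e^{\rho}, \qquad x+iy\in\Omega_t. $$
By Theorem 3.1, $\varphi\in V_t^\omega$ iff $\varphi$ extends holomorphically to $\Omega_t$ and $\int_{\Omega_t}|\varphi(x+iy)|^2 w_t(x,y)\,dx\,dy<\infty$, the integral being $\|\varphi\|_{V_t^\omega}^2$; and by Theorem 3.2, $\varphi\in V^\omega$ iff $\varphi\in V_{t_0}^\omega$ for some $t_0>0$. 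So it suffices to prove the two ``more precise'' assertions.

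For the converse half, suppose $\varphi\in L^2(\R^n)$ extends holomorphically to $\Omega_t$ with $|\varphi(x+iy)|\le C(t)e^{-|x|(t^2-|y|^2)^{1/2}}$, and fix $s<t$. Writing $\varepsilon(y):=(t^2-|y|^2)^{1/2}-(s^2-|y|^2)^{1/2}>0$ for $|y|<s$, the upper bound on $w_s$ gives
$$ \int_{\Omega_s}|\varphi|^2 w_s\ \le\ C(t)^2C_n\int_{|y|<s}(s^2-|y|^2)^{n/2}\Big(\int_{\R^n}e^{-2|x|\varepsilon(y)}\,dx\Big)dy\ \le\ C'\int_{|y|<s}\frac{(s^2-|y|^2)^{n/2}}{\varepsilon(y)^{\,n}}\,dy. $$
Since $\varepsilon$ is continuous and strictly positive on the compact set $\{|y|\le s\}$ (at $|y|=s$ it equals $(t^2-s^2)^{1/2}>0$), the last integrand is bounded and the integral is finite; thus $\varphi\in V_s^\omega$ for every $s<t$.

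For the direct half I would use the sub-mean-value inequality for the plurisubharmonic function $|\varphi|^2$ on Euclidean balls in $\C^n\cong\R^{2n}$ (this is the pointwise realization of the reproducing-kernel bound $|\varphi(z)|\le \|\varphi\|_{V_t^\omega}\sqrt{K_t(z,z)}$). Let $\varphi\in V_t^\omega$ and $s<t$. Choose $\delta=\delta(s,t)>0$ so small that $s+\delta<t$ and that $h(r):=(t^2-(r+\delta)^2)^{1/2}-(s^2-r^2)^{1/2}$ is positive on $[0,s]$ (possible since $2s\delta+\delta^2<t^2-s^2$ for small $\delta$); by compactness $h\ge\eta>0$ there. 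For $z_0=x_0+iy_0$ with $|y_0|<s$ and $|x_0|\ge\delta$, the ball $B(z_0,\delta)$ lies in $\Omega_t$, and on it $|x|\ge|x_0|-\delta$, $|\mathrm{Im}\,z|\le|y_0|+\delta$, so the lower bound on $w_t$ yields $w_t(z)\ge c\,(1+|x_0|)^{-(n-1)/2}e^{2(|x_0|-\delta)(t^2-(|y_0|+\delta)^2)^{1/2}}$ on $B(z_0,\delta)$. Combining $|\varphi(z_0)|^2\le (\mathrm{vol}\,B(z_0,\delta))^{-1}\int_{B(z_0,\delta)}|\varphi|^2$ with this and with the identity $(t^2-(|y_0|+\delta)^2)^{1/2}=(s^2-|y_0|^2)^{1/2}+h(|y_0|)$ gives
$$ |\varphi(z_0)|^2\ \le\ C_\delta\,(1+|x_0|)^{\frac{n-1}{2}}\,e^{2\delta t}\,e^{-2|x_0|\eta}\,e^{-2|x_0|(s^2-|y_0|^2)^{1/2}}\,\|\varphi\|_{V_t^\omega}^2. $$
As $(1+|x_0|)^{(n-1)/2}e^{-2|x_0|\eta}$ is bounded, this gives $|\varphi(z_0)|\le C(s)e^{-|x_0|(s^2-|y_0|^2)^{1/2}}$ for $|x_0|\ge\delta$; for $|x_0|\le\delta$ the estimate is trivial since $\varphi$ is bounded on the compact set $\{|x|\le\delta,\ |y|\le s\}\subset\Omega_t$ while the right side is $\ge e^{-\delta s}$ there. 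This establishes (\ref{**}) for $\varphi$ at level $s$, and with Theorem 3.2 the stated equivalence follows.

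The main obstacle is the bookkeeping in the direct half: the ball radius $\delta$ must be chosen so that, after the unavoidable replacements of $|x_0|$ by $|x_0|-\delta$ and of $(t^2-|y_0|^2)^{1/2}$ by $(t^2-(|y_0|+\delta)^2)^{1/2}$, the resulting exponential rate still dominates $(s^2-|y_0|^2)^{1/2}$ uniformly in $y_0$ with $|y_0|<s$, including in the degenerate limit $|y_0|\to s$. The strict inequality $s<t$ is exactly what produces the margin $\eta>0$ that makes this possible, and the polynomial prefactor coming from the Bessel asymptotics is harmless because any genuine exponential gain absorbs it; the two-sided weight bound and the sub-mean-value inequality are standard.
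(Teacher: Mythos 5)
Your proof is correct and follows essentially the same route as the paper: both directions rest on Theorem 3.1 together with two-sided exponential bounds for the Bessel weight $w_t$ (you derive them from the asymptotics of $I_\nu$, the paper from the Poisson integral representation of $J_{n/2-1}$), with the easy direction obtained by integrating the pointwise bound against $w_s$ and the hard direction by a mean-value argument on small balls with a margin $s<t$ absorbing the losses. The only cosmetic differences are that you use the sub-mean-value inequality for $|\varphi|^2$ plus a pointwise lower bound for $w_t$ on the ball, where the paper applies the mean-value property to $\varphi$ and Cauchy--Schwarz against $w_t$ and $w_t^{-1}$.
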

\begin{proof}
For $n =1$ this was proved by Hille in \cite{Hi}, see also Goodman \cite{G1}. We will prove the theorem only for $n>1.$ Let $ \varphi \in L^2(\R^n) $ be such that for some $ t > 0$ it extends to $ \Omega_t $ as a holomorphic function and satisfies (\ref{**}). Using the Poisson representation of Bessel functions we have
$$ \frac{J_{\frac{n}{2}-1} (2i|x|(t^2-y^2)^{\frac{1}{2}})} {(2i|x|(t^2-y^2)^{\frac{1}{2}})^{\frac{n}{2}-1}} = c_n \int_{-1}^1 e^{-2|x|s(t^2-y^2)^{\frac{1}{2}}} (1-s^2)^{(n-3)/2}ds$$
which holds for every $n>1$. And thus we have
$$ C_{r,n}~ e^{2r|x|(t^2-y^2)^{\frac{1}{2}}} \leq \frac{J_{\frac{n}{2}-1} (2i|x|(t^2-y^2)^{\frac{1}{2}})} {(2i|x|(t^2-y^2)^{\frac{1}{2}})^{\frac{n}{2}-1}} \leq C_n~ 
e^{2|x|(t^2-y^2)^{\frac{1}{2}}}$$ for every $0<r<1.$

The second inequality together with the given condition on $\varphi$ implies
$$ \int_{\Omega_t} |\varphi(x+iy)|^2 w_s(x,y) dx dy < \infty$$
for any $s<t$. Therefore by Theorem 3.1, $\varphi \in V_s^\omega$.

On the other hand, let $\varphi \in V_t^\omega$ be given. Choose and fix $s$ such that $0<s<t$. Also choose $\delta > 0$ small enough so that $s+\delta<t$. Now for any $z \in \Omega_s$, the mean value theorem applied to the balls $B(0,\delta) = \{ w \in \C^{2n} : |w|<\delta \}$ gives
$$ \varphi(z) = \frac{c_n}{\delta^{2n}} \int_{B(0,\delta)} \varphi(z+w)dw.$$
Therefore by Cauchy-Schwarz inequality
\Bea |\varphi(z)|^2 &\leq& \Big(\frac{c_n}{\delta^{2n}} \Big)^2 (\int_{B(0,\delta)} |\varphi(z+w)|^2 w_t(z+w)dw) \\
&& \hspace{0.5in} (\int_{B(0,\delta)} w_t^{-1}(z+w)dw) \\
&\leq& \Big(\frac{c_n}{\delta^{2n}} \Big)^2 ||\varphi||_{V_t^\omega}^2 \int_{B(0,\delta)} w_t^{-1}(z+w)dw. \Eea
Thus we only need to estimate the last integral. But the integrand is dominated by
$$ C_{r,n}^{-1} \big(t^2 - |y+v|^2 \big)^{-\frac{n}{2}}~e^{-2r|x| \big(t^2-|y+v|^2 \big)^{1/2}}.$$
Choosing $r$ such that $rt>s$, one can take $\delta$ small enough to ensure that the above estimate is bounded by a constant (depending on $r$ and $\delta$) multiple of $ e^{-2|x| \big(s^2-|y|^2 \big)^{1/2}}$ for every $z=x+iy \in \Omega_s.$
\end{proof}

We now turn our attention to representation theorems for analytic vectors. In order to state our first result we need to introduce some more notation. For $ \lambda \neq 0 $ we let $ H(\lambda) = -\Delta +\lambda^2 |x|^2 $ stand for the scaled Hermite operator on $ \R^n.$ We use the same notation for the Hermite operator on $ \R^{2n} $ also; the context will make it clear which operator is used. Let $ \Phi_\alpha, \alpha \in \N^n $ denote the Hermite functions on $ \R^n $ which satisfy $ H\Phi_\alpha = (2|\alpha|+n) \Phi_\alpha.$ Defining $ \Phi_\alpha^\lambda(x) = |\lambda|^{n/4}\Phi_\alpha( |\lambda|^{1/2}x) $ the tensor products $ \Phi_\alpha^\lambda \otimes \Phi_\beta^\lambda(x,y) = \Phi_\alpha^\lambda(x)\Phi_\beta^\lambda(y) $ are eigenfunctions of $ H(\lambda) $ on $ \R^{2n} $ with eigenvalues $(2|\alpha|+ 2|\beta|+2n)|\lambda|.$ If we let $\Phi_{\alpha,\beta}^\lambda$ stand for the scaled special Hermite functions, then it is known that $\Phi_{\alpha,\beta}^\lambda$ are eigenfunctions of $H(\lambda/2)$ with eigenvalue $(|\alpha|+ |\beta|+ n)|\lambda|.$ Let us define $\CM_{t,\lambda}^\omega(\R^n) = e^{-t\sqrt{H(\lambda)}} (L^2(\R^n))$ so that $ \CM_t^\omega = \CM_{t,1}^\omega(\R^n).$ With these notations we have

\begin{thm} $ W(\CM_{\sqrt{2}t, 1/2}^\omega(\R^{2n}))V = \CM_t^\omega.$
\end{thm}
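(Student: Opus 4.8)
The plan is to reduce the statement $W(\CM_{\sqrt{2}t,1/2}^\omega(\R^{2n}))V = \CM_t^\omega$ to the intertwining relation between the Weyl transform and the semigroups, together with Theorem 3.3. Recall from Section 2 that $W(e^{itL}F) = W(F)e^{itH}$, where $L$ is the special Hermite operator on $\R^{2n}$. The key algebraic fact I would first establish is the analytic-semigroup analogue: $W(e^{-sL}F) = W(F)e^{-sH}$ for $s>0$, obtained by analytic continuation in the time parameter from the unitary group identity (both sides are holomorphic in $s$ in the right half-plane when applied to, say, $F$ in a dense class, and bounded, so they agree). Since $L$ equals $H(1/2)$ acting on $\R^{2n}$ in the sense made precise just before the statement (the special Hermite functions $\Phi_{\alpha,\beta}$ are eigenfunctions of $H(1/2)$ on $\R^{2n}$ with eigenvalue $|\alpha|+|\beta|+n$, matching the spectrum of $L$), we get $e^{-sL} = e^{-sH(1/2)}$ on $\R^{2n}$. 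Passing to the Poisson-Hermite semigroup by subordination (or directly, since $\sqrt{L}=\sqrt{H(1/2)}$ has the same eigenfunction expansion), this yields $W(e^{-s\sqrt{H(1/2)}}F) = W(F)e^{-s\sqrt{H}}$.

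Next I would handle the scaling between the exponent $\sqrt{2}t$ on the $\R^{2n}$ side and $t$ on the $\R^n$ side. The eigenvalues of $\sqrt{H(1/2)}$ on $\R^{2n}$ are $\sqrt{|\alpha|+|\beta|+n}$, while those of $\sqrt{H}$ on $\R^n$ are $\sqrt{2|\gamma|+n}$. Under the Weyl correspondence $W(\Phi_{\alpha,\beta}) = (2\pi)^n/\text{(const)}\cdot$ (rank-one operator built from $\Phi_\alpha,\Phi_\beta$), which sends the eigenspace of $L$ with eigenvalue $k+n$ to operators mapping into the $k$-th Hermite eigenspace (eigenvalue $2k+n$ of $H$). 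So $e^{-\sqrt{2}t\sqrt{H(1/2)}}$ acts on that eigenspace by $e^{-\sqrt{2}t\sqrt{k+n}}$... this does not obviously match $e^{-t\sqrt{2k+n}}$. I expect the correct bookkeeping uses instead the direct pointwise characterization from Theorem 3.3 rather than matching exponents eigenspace by eigenspace. Concretely: by Theorem 3.3, $\varphi\in\CM_t^\omega$ (which by Theorem 3.2 is $V_t^\omega$, up to the $\cup$/single-$t$ subtlety to be tracked carefully) iff $\varphi$ extends holomorphically to $\Omega_t\subset\C^n$ with $|\varphi(x+iy)|\le C(s)e^{-|x|(s^2-y^2)^{1/2}}$ for all $s<t$. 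The analogous statement on $\R^{2n}$ characterizes $\CM_{\sqrt{2}t,1/2}^\omega(\R^{2n})$ by holomorphic extension to a tube of radius $\sqrt{2}t$ (rescaled by the $1/2$ in the operator) with a matching Gaussian-type decay.

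So the real content of the proof is: given $F\in\CM_{\sqrt{2}t,1/2}^\omega(\R^{2n})$, show $W(F)\varphi\in\CM_t^\omega$ for every $\varphi\in L^2(\R^n)$, and conversely every element of $\CM_t^\omega$ arises this way. For the forward direction I would write $F = e^{-\sqrt{2}t\sqrt{H(1/2)}}G$ with $G\in L^2(\R^{2n})$, so $W(F)\varphi = W(e^{-\sqrt{2}t\sqrt{H(1/2)}}G)\varphi = (W(G)e^{-?\sqrt{H}})\varphi$ — here I need the precise relation $W(e^{-s\sqrt{H(1/2)}}G) = W(G)e^{-(s/\sqrt2)\sqrt{H}}$ coming from the half-angle rescaling $H(1/2)$ versus $H$ on $\R^{2n}$ in terms of $L$; the factor $\sqrt2$ is exactly what converts $\sqrt{2}t\mapsto t$. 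Then $e^{-t\sqrt H}\varphi\in\CM_t^\omega$ by definition, and $W(G)$ maps $L^2(\R^n)$ into $L^2(\R^n)$ boundedly, but I additionally need that $W(G)$ preserves the holomorphic-extension-with-decay property — this is where Theorem 3.3's pointwise estimate is used, together with the fact that $W(G)$ acts diagonally across Hermite eigenspaces in a way compatible with the weight $w_t$. For the converse, given $\psi\in\CM_t^\omega$, write $\psi = e^{-t\sqrt H}\varphi$, pick (as in the proof of Theorem 2.2) some $f\in L^2(\R^n)$ and the Fourier-Wigner vector so that $\varphi = \frac{1}{(f,\varphi)}W(V(\widetilde\varphi,\widetilde\varphi))f$, then $\psi = e^{-t\sqrt H}\varphi = e^{-t\sqrt H}\frac{1}{(f,\varphi)}W(V(\widetilde\varphi,\widetilde\varphi))f = \frac{1}{(f,\varphi)}W(e^{-\sqrt2 t\sqrt{H(1/2)}}V(\widetilde\varphi,\widetilde\varphi))f$ by pushing the semigroup through $W$ via the intertwining identity in the other direction, and the point is that $e^{-\sqrt2 t\sqrt{H(1/2)}}V(\widetilde\varphi,\widetilde\varphi)\in\CM_{\sqrt2 t,1/2}^\omega(\R^{2n})$ trivially since it is in the image of that semigroup.

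\textbf{Main obstacle.} The delicate point is getting the constant $\sqrt2$ exactly right and correctly interpreting "$\CM_{\sqrt2 t,1/2}^\omega(\R^{2n})$", i.e. disentangling the three scalings in play: the operator $H(1/2)=L$ carries a factor $1/2$ in the potential, which rescales its eigenfunctions and hence the tube radius in the Bergman picture; the spectrum of $L$ on $\R^{2n}$ ($k+n$) versus $H$ on $\R^n$ ($2k+n$) differs by roughly a factor $2$, and $\sqrt{2(k+n)}$ versus $\sqrt{2k+n}$ only agree asymptotically, so the exponent-matching must be done at the level of the Poisson kernel / holomorphic extension (Theorem 3.3) rather than term by term, using that the characterization is stable under the equivalence $2k+n\asymp 2(k+n)$ for the $\cup_{t>0}$ (but here we want a fixed $t$!). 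I expect one in fact proves the two inclusions with a loss in $t$ — i.e. $W(\CM_{\sqrt2 s,1/2}^\omega)V\subseteq\CM_t^\omega$ for $s$ slightly larger than $t$ and vice versa — exactly mirroring the "for every $s<t$" hedging in Theorem 3.3, and the stated clean equality then follows after taking unions, or because $\CM_{\sqrt2 t,1/2}^\omega$ is in fact defined precisely so that the radii match on the nose. Pinning down which of these is the intended reading, and verifying the $w_t$-weight transforms correctly under $W$, is the crux.
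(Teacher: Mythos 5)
There is a genuine gap, and it sits exactly where you flagged your ``main obstacle.'' Your strategy rests on the identification $e^{-sL}=e^{-sH(1/2)}$ on $\R^{2n}$, but these are different operators: the paper defines $L=-\Delta+\frac14|z|^2-i\sum_j(x_j\partial_{y_j}-y_j\partial_{x_j})$, and the rotation term matters. The special Hermite functions $\Phi_{\alpha,\beta}$ are joint eigenfunctions of both, but with different eigenvalues: $H(1/2)\Phi_{\alpha,\beta}=(|\alpha|+|\beta|+n)\Phi_{\alpha,\beta}$, whereas $L\Phi_{\alpha,\beta}=(2|\beta|+n)\Phi_{\alpha,\beta}$ (only $\beta$ is seen, which is precisely why $W(e^{itL}F)=W(F)e^{itH}$ holds --- $W(\Phi_{\alpha,\beta})$ is essentially the rank-one map $f\mapsto(f,\widetilde{\Phi_\beta})\widetilde{\Phi_\alpha}$). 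Consequently your proposed intertwining $W(e^{-s\sqrt{H(1/2)}}F)=W(F)e^{-(s/\sqrt2)\sqrt H}$ is false, and both of your inclusions, as written, collapse. Your worry about $\sqrt{2(k+n)}$ versus $\sqrt{2k+n}$ and your guess that one must lose in $t$ and take unions are symptoms of missing the actual mechanism, which gives exact equality at fixed $t$.

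The paper's proof works entirely at the level of coefficients. Membership in $\CM^\omega_{\sqrt2 t,1/2}(\R^{2n})$ means $\sum_{\alpha,\beta}e^{2t(2|\alpha|+2|\beta|+2n)^{1/2}}|(F,\Phi_{\alpha,\beta})|^2<\infty$, since $e^{-\sqrt2 t\sqrt{H(1/2)}}$ acts on $\Phi_{\alpha,\beta}$ by $e^{-t(2|\alpha|+2|\beta|+2n)^{1/2}}$. For the inclusion $\CM_t^\omega\subseteq W(\cdot)V$ one writes $\varphi=\frac{1}{(f,\varphi)}W(V(\widetilde\varphi,\widetilde\varphi))f$ as you did, but then shows $V(\widetilde\varphi,\widetilde\varphi)\in\CM^\omega_{\sqrt2 t,1/2}(\R^{2n})$ by Moyal's formula, $(V(\varphi_1,\varphi_2),\Phi_{\alpha,\beta})=(\varphi_1,\Phi_\alpha)\overline{(\varphi_2,\Phi_\beta)}$, together with the subadditivity $(2|\alpha|+2|\beta|+2n)^{1/2}\le(2|\alpha|+n)^{1/2}+(2|\beta|+n)^{1/2}$, which factors the double sum into a product of two convergent single sums; this is not ``trivial'' and is the content of Lemma 3.5. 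For the reverse inclusion one expands $F=\sum_{\beta,\gamma}e^{-t(2|\beta|+2|\gamma|+2n)^{1/2}}(G,\Phi_{\beta,\gamma})\Phi_{\beta,\gamma}$, computes $(W(F)f,\Phi_\alpha)$ using $W(V(\Phi_\beta,\Phi_\gamma))f=(f,\widetilde{\Phi_\gamma})\widetilde{\Phi_\beta}$, and applies Cauchy--Schwarz in $\gamma$ together with $(2|\alpha|+2|\gamma|+2n)^{1/2}\ge(2|\alpha|+n)^{1/2}$ to get $\sum_\alpha e^{2t(2|\alpha|+n)^{1/2}}|(W(F)f,\Phi_\alpha)|^2\le\|f\|_2^2\|G\|_2^2$. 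Nothing is lost in $t$, and Theorem 3.3 is not needed anywhere in the argument.
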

\begin{proof}
We will first show that $ \CM_t^\omega \subseteq W(\CM_{\sqrt{2}t,1/2}^ \omega(\R^{2n}))V.$ For this, take $\varphi \in \CM_t^\omega$ and choose any $f \in V$ such that $(f,\varphi) \neq 0$ and then again by Moyal's formula
$$ \varphi = \frac{1}{(f,\varphi)}W\big(V(\widetilde{\varphi}, \widetilde{\varphi}) \big)f.$$
Notice that $\varphi \in \CM_t^\omega$ implies that $\widetilde{\varphi}$ also belongs to $\CM_t^\omega$ and thus we only need to show $V(\widetilde{\varphi}, \widetilde{\varphi}) \in \CM_{\sqrt{2}t,1/2}^\omega(\R^{2n})$. But this follows from the following lemma.

\begin{lem}
For any functions $\varphi_1, \varphi_2 \in \CM_t^\omega$, we have $V(\varphi_1, \varphi_2) \in \CM_{\sqrt{2}t,1/2}^\omega(\R^{2n}).$
\end{lem}
\begin{proof} From Moyal's formula, we have
$$ \big( V(\varphi_1, \varphi_2),~ \Phi_{\alpha,\beta} \big) = (\varphi_1,~\Phi_\alpha) \overline{(\varphi_2,~\Phi_\beta)}.$$
Therefore,
\Bea && \sum_{\alpha,\beta \in \N^n} e^{(2|\alpha|+2|\beta|+2n)^ {\frac{1}{2}}2t} \big| \big( V(\varphi_1, \varphi_2),~ \Phi_{\alpha,\beta} \big) \big|^2 \\
&=& \sum_{\alpha,\beta \in \N^n} e^{(2|\alpha|+2|\beta|+2n)^ {\frac{1}{2}}2t} |(\varphi_1,~\Phi_\alpha)|^2 |{(\varphi_2,~\Phi_\beta)}|^2\\
&\leq& \Big( \sum_{\alpha \in \N^n} e^{(2|\alpha|+n)^ {\frac{1}{2}}2t} |(\varphi_1,~\Phi_\alpha)|^2 \Big) \Big( \sum_{\beta \in \N^n} e^{(2|\beta|+n)^{\frac{1}{2}}2t}|
(\varphi_2,~\Phi_\beta)|^2 \Big) \\
&<& \infty.
\Eea
Hence the lemma.
\end{proof}

Now in order to complete the proof of theorem 3.4 it remains to be shown that $W(\CM_{\sqrt{2}t,1/2}^\omega(\R^{2n}))V \subseteq \CM_t^\omega.$ For this, take any $F \in \CM_{\sqrt{2}t,1/2}^\omega(\R^{2n})$ and $f \in V$. Thus there exists $G \in L^2(\R^{2n})$ such that $F = e^{-t\sqrt{2H(\frac{1}{2})}}G$, which means $$ F = \sum_{\beta,\gamma \in \N^n} e^{-(2|\beta|+2|\gamma|+2n)^ {\frac{1}{2}}t}~ (G,\Phi_{\beta,\gamma}) \Phi_{\beta,\gamma}.$$
Now,
\Bea \big( W(F)f, \Phi_\alpha \big) &=& \sum_{\beta,\gamma \in \N^n} e^{-(2|\beta|+2|\gamma|+2n)^{\frac{1}{2}}t}~
(G,\Phi_{\beta,\gamma}) \Big( W \big( V(\Phi_\beta,\Phi_\gamma) \big) f, \Phi_\alpha \Big) \\
&=& \sum_{\beta,\gamma \in \N^n} e^{-(2|\beta|+2|\gamma|+2n)^ {\frac{1}{2}}t}~ (G,\Phi_{\beta,\gamma}) (f,\widetilde{\Phi_\gamma}) (\widetilde{\Phi_\beta},\Phi_\alpha)\\
&=& (-1)^{|\alpha|} \sum_{\gamma \in \N^n} (-1)^{|\gamma|} e^{-(2|\alpha|+2|\gamma|+2n)^{\frac{1}{2}}t}~ (G,\Phi_{\alpha,\gamma}) (f,\Phi_\gamma). \Eea
Therefore, by Cauchy-Schwarz inequality
\Bea |\big( W(F)f, \Phi_\alpha \big)|^2 &\leq& \big( \sum_{\gamma \in
\N^n}e^{-(2|\alpha|+2|\gamma|+2n)^{\frac{1}{2}}2t}~ |(G,\Phi_{\alpha,\gamma})|^2 \big) \big( \sum_{\gamma \in \N^n} |(f, \Phi_\gamma)|^2 \big) \\
&\leq& e^{-(2|\alpha|+n)^{\frac{1}{2}}2t} ||f||_{L^2(\R^n)}^2 \sum_{\gamma \in \N^n}|(G,\Phi_{\alpha,\gamma})|^2 \Eea
which shows that
$$ \sum_{\alpha \in \N^n} e^{(2|\alpha|+n)^{\frac{1}{2}}2t} |\big( W(F)f, \Phi_\alpha \big)|^2~ \leq~ ||f||_{L^2(\R^n)}^2~ ||G||_{L^2(\R^{2n})}^2 ~<~ 
\infty.$$
Thus $W(F)f \in \CM_t^\omega$ and hence the theorem.
\end{proof}

As we have very good estimates for functions from $\CM_t^\omega(\R^n)$ (in view of Theorem 3.3) we immediately get

\begin{cor} Every analytic vector $ \varphi \in \CM_t^\omega $ has the representation $ \varphi = \pi(h)\psi $ for some $ \psi \in L^2(\R^n) $ and $ h \in L^1(\H^n)$ satisfying
$$ |\CF_1 h^1(x,u)| \leq C e^{-t(x^2+\frac{1}{4}u^2)^ {\frac{1}{2}}},~~~ |\CF_2 h^1(x,u)| \leq C e^{-t(\frac{1}{4}x^2+u^2)^{\frac{1}{2}}}.$$
\end{cor}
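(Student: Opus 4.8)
The plan is to read the representing function off the \emph{proof} of Theorem~3.4 rather than from its statement: the explicit intertwiner $V(\widetilde\varphi,\widetilde\varphi)$ used there makes the two partial Fourier transforms completely transparent. We may assume $\varphi\neq 0$. Given $\varphi\in\CM_t^\omega$, note that $\widetilde\varphi\in\CM_t^\omega$ as well, pick $f\in L^2(\R^n)$ with $(f,\varphi)\neq 0$, and apply Moyal's formula:
$$ \varphi=\frac{1}{(f,\varphi)}\,W\big(V(\widetilde\varphi,\widetilde\varphi)\big)f. $$
Set $F:=V(\widetilde\varphi,\widetilde\varphi)$. By the lemma established inside the proof of Theorem~3.4 we have $F\in\CM_{\sqrt{2}t,1/2}^\omega(\R^{2n})$; moreover $\varphi\in\CM_t^\omega\subset\CS(\R^n)$, so the computation in the proof of Theorem~2.2 shows $F\in\CS(\R^{2n})$, in particular $F\in L^1(\R^{2n})$. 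Fix $\rho\in\CS(\R)$ with $\int_\R\rho(s)e^{is}\,ds=1$ and put $h(x,u,s)=\frac{1}{(f,\varphi)}F(x,u)\rho(s)$. Then $h\in L^1(\H^n)$, $h^1=\frac{1}{(f,\varphi)}F$, hence $\pi(h)=W(h^1)=\frac{1}{(f,\varphi)}W(F)$ and $\varphi=\pi(h)f$. Thus it remains only to estimate $\CF_1 h^1$ and $\CF_2 h^1$, i.e.\ $\CF_1 F$ and $\CF_2 F$.

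I would next compute these partial Fourier transforms from the two standard forms of the Fourier--Wigner transform. From
$$ V(\widetilde\varphi,\widetilde\varphi)(x,u)=(2\pi)^{-\frac{n}{2}}\int_{\R^n}e^{ix\cdot\xi}\,\varphi\big(-\xi-\tfrac{u}{2}\big)\,\overline{\varphi\big(-\xi+\tfrac{u}{2}\big)}\,d\xi $$
the transform in $x$ simply inverts this representation and yields, up to an absolute constant, $\CF_1 F(\eta,u)=\varphi\big(-\eta-\tfrac{u}{2}\big)\,\overline{\varphi\big(-\eta+\tfrac{u}{2}\big)}$. Starting instead from $V(\widetilde\varphi,\widetilde\varphi)(x,u)=(2\pi)^{-\frac{n}{2}}\int_{\R^n}e^{i(x\cdot\xi+\frac12 x\cdot u)}\varphi(-\xi-u)\overline{\varphi(-\xi)}\,d\xi$, the transform in $u$, after the change of variables $w=-\xi-u$ and recognising the remaining $\xi$-integral as a reflected Fourier transform of $\overline{\varphi}$, gives (again up to a constant) $\CF_2 F(x,\zeta)=\widehat\varphi\big(\tfrac{x}{2}-\zeta\big)\,\overline{\widehat\varphi\big(-\tfrac{x}{2}-\zeta\big)}$. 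Here one uses that $\widehat\varphi\in\CM_t^\omega$ as well, which holds because $\CF=e^{in\pi/4}e^{-i\pi H/4}$ commutes with $e^{-t\sqrt{H}}$.

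The decay now comes from Theorem~3.3: for every $s<t$ one has $|\varphi(y)|\le C e^{-s|y|}$ and $|\widehat\varphi(y)|\le C e^{-s|y|}$ on $\R^n$. Substituting,
$$ |\CF_1 F(\eta,u)|\le C^2 e^{-s(|\eta+u/2|+|\eta-u/2|)},\qquad |\CF_2 F(x,\zeta)|\le C^2 e^{-s(|x/2-\zeta|+|x/2+\zeta|)}. $$
Using $|a|+|b|\ge(|a|^2+|b|^2)^{1/2}$ together with the identities $|\eta+\tfrac{u}{2}|^2+|\eta-\tfrac{u}{2}|^2=2\eta^2+\tfrac12 u^2$ and $|\tfrac{x}{2}-\zeta|^2+|\tfrac{x}{2}+\zeta|^2=\tfrac12 x^2+2\zeta^2$, these become
$$ |\CF_1 F(\eta,u)|\le C^2 e^{-\sqrt{2}\,s\,(\eta^2+\frac{1}{4}u^2)^{1/2}},\qquad |\CF_2 F(x,\zeta)|\le C^2 e^{-\sqrt{2}\,s\,(\frac{1}{4}x^2+\zeta^2)^{1/2}}. $$
Choosing any $s$ with $t/\sqrt{2}<s<t$ makes $\sqrt{2}\,s>t$, and since the exponents are nonnegative we obtain the two estimates of the corollary (with constant $|(f,\varphi)|^{-1}C^2$); taking $\psi=f$ finishes the argument.

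The only point needing care is the compatibility between the loss $s<t$ inherent in Theorem~3.3 and the gain carried by $V(\widetilde\varphi,\widetilde\varphi)$ sitting at semigroup parameter $\sqrt{2}t$, which reappears above as the constant $\sqrt{2}$ produced by the elementary inequality; since $\sqrt{2}\,s$ can be pushed past $t$, the two effects cancel and nothing is lost in the statement. The remaining ingredients --- the delta-function collapse computing $\CF_1 F$, the change of variables for $\CF_2 F$, the Fourier-invariance of $\CM_t^\omega$, and the passage from $F$ on $\R^{2n}$ to $h$ on $\H^n$ through a fixed $\rho$ --- are entirely routine.
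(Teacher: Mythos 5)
Your proposal is correct and follows essentially the same route as the paper: Moyal's formula gives $h^1=(f,\varphi)^{-1}V(\widetilde{\varphi},\widetilde{\varphi})$, the transform $\CF_1 h^1$ collapses to $\widetilde{\varphi}(x+\frac{u}{2})\overline{\widetilde{\varphi}(x-\frac{u}{2})}$, and Theorem~3.3 applied at $s=t/\sqrt{2}$ together with $|a|+|b|\geq(|a|^2+|b|^2)^{1/2}$ yields the stated bounds. The only additions are details the paper leaves implicit (``similar argument''), namely the explicit computation of $\CF_2 h^1$ in terms of $\hat{\varphi}$ and the Fourier invariance of $\CM_t^\omega$.
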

\begin{proof} We choose any $f \in V$ such that $(f,\varphi) \neq 0$ and consider
$$ h(x,u,t) = \frac{1}{(f,\varphi)} V(\widetilde{\varphi}, \widetilde{\varphi})(x,u)q(t)$$
where $q \in \CS(\R)$ satisfying $\hat{q}(-1) = 1$ is chosen arbitrarily so that $h^1 = \frac{1}{(f,\varphi)} V(\widetilde{\varphi}, \widetilde{\varphi})$ and therefore $\varphi = \pi(h)f.$ Also,
\Bea
\CF_1 h^1 (x,u) &=& \frac{1}{(f,\varphi)} \CF_1 (V(\widetilde{\varphi}, \widetilde{\varphi}))(x,u)\\
&=& \frac{1}{(f,\varphi)} \widetilde{\varphi}(x + \frac{u}{2}) \overline{\widetilde{\varphi}(x - \frac{u}{2})}.
\Eea
Now we apply Theorem 3.3 to $\widetilde{\varphi}$ with $s = t/ \sqrt{2}$ to get the stated estimate on $\CF_1 h^1$. Similar argument works for $\CF_2 h^1$ and hence the Corollary is proved.
\end{proof}

It is natural to ask if the converse is also true, i.e., if a function $ h $ satisfies the two conditions stated in the above corollary then $ \pi(h) $ takes $ L^2(\R^n) $ into $ \CM_s^\omega$ for some $ s.$ We have the following theorem answering this question in the affirmative.

\begin{thm} Let $F$ be a function on $\R^{2n}$ satisfying the conditions
$$ |\CF_1 F(x,u)| \leq C e^{-t(x^2+\frac{1}{4}u^2)^ {\frac{1}{2}}},~~~ |\CF_2 F(x,u)| \leq C e^{-t(\frac{1}{4}x^2+u^2)^{\frac{1}{2}}}.$$ Then for any $ \varphi \in L^2(\R^n) $ and $0 < s < \frac{t}{2\sqrt{2n}},~ W(F)\varphi $ belongs to $\CM_s^\omega.$ Moreover, $ W(F): L^2(\R^n) \rightarrow \CM_s^\omega $ is bounded.
\end{thm}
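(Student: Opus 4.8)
The plan is to reduce the theorem to a single membership assertion on $\R^{2n}$ and then apply the $2n$-dimensional case of Theorem 3.3. Recall that $\varphi\in\CM_s^\omega$ precisely when $\sum_{\alpha}e^{2s(2|\alpha|+n)^{1/2}}|(\varphi,\Phi_\alpha)|^2<\infty$, and likewise on $\R^{2n}$ with $2|\alpha|+n$ replaced by the appropriate Hermite eigenvalue; so it suffices to estimate this sum for $\varphi=W(F)f$ and to do so uniformly in $\|f\|_2$.

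First I would run the computation from the proof of Theorem 3.4. Since $W$ is, up to a constant, an isometry of $L^2(\R^{2n})$ onto the Hilbert--Schmidt operators, the special Hermite expansion gives $W(F)=\sum_{\beta,\gamma}(F,\Phi_{\beta,\gamma})W(\Phi_{\beta,\gamma})$ in Hilbert--Schmidt norm; inserting $W(\Phi_{\beta,\gamma})f=W(V(\Phi_\beta,\Phi_\gamma))f=(-1)^{|\beta|+|\gamma|}(f,\Phi_\gamma)\Phi_\beta$ (using $\widetilde{\Phi_\beta}=(-1)^{|\beta|}\Phi_\beta$) and then applying Cauchy--Schwarz together with $2|\alpha|+n\le 2(|\alpha|+|\gamma|+n)$ yields
\[\sum_\alpha e^{2s(2|\alpha|+n)^{1/2}}|(W(F)f,\Phi_\alpha)|^2\ \le\ \|f\|_2^2\sum_{\alpha,\gamma}e^{2s(2(|\alpha|+|\gamma|+n))^{1/2}}|(F,\Phi_{\alpha,\gamma})|^2.\]
Now Plancherel in the $x$-variable, the formula $\CF_1\Phi_{\alpha,\gamma}(x,u)=\Phi_\alpha(x+\frac u2)\Phi_\gamma(x-\frac u2)$, and the measure-preserving substitution $p=x+\frac u2,\ q=x-\frac u2$ give $(F,\Phi_{\alpha,\gamma})=c_n(K,\Phi_\alpha\otimes\Phi_\gamma)$ with $K(p,q):=\CF_1F(\frac{p+q}{2},p-q)$; since $\Phi_\alpha\otimes\Phi_\gamma$ is the Hermite function on $\R^{2n}$ of eigenvalue $2(|\alpha|+|\gamma|+n)$, the double sum equals $c_n^2$ times the squared $\CM_s^\omega(\R^{2n})$-norm of $K$. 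Hence both conclusions of the theorem (membership and boundedness) follow once we show $K\in\CM_s^\omega(\R^{2n})$ for $0<s<t/(2\sqrt{2n})$.

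To prove this I would use Theorem 3.3 on $\R^{2n}$, for which two ingredients are needed. First, the pointwise decay of $K$ on $\R^{2n}$: the first hypothesis, together with the identity $(\frac{p+q}{2})^2+\frac14(p-q)^2=\frac12(p^2+q^2)$, gives at once $|K(p,q)|\le Ce^{-\frac{t}{\sqrt2}(p^2+q^2)^{1/2}}$. Second, a holomorphic extension: the second hypothesis makes $F$ extend holomorphically in the $u$-variable and decay exponentially in the original $x$-variable, so that $\CF_1F(x,u)$ extends holomorphically in both arguments to a product of strips, and composing with the linear substitution exhibits $K$ as holomorphic and bounded (uniformly in $w$) on a tube $\{w+iv\in\C^{2n}:|v|<\rho_0\}$ with $\rho_0$ a fixed positive multiple of $t$; equivalently one writes $K$ as a double Fourier integral against $\CF_2F$ and shifts the contour of integration, the first hypothesis supplying the holomorphy of the integrand. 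A Phragm\'en--Lindel\"of (contour-shift) argument then propagates the flat decay into the tube, giving $|K(w+iv)|\le C_\rho e^{-|w|\kappa_\rho(|v|)}$ on each $|v|<\rho<\rho_0$ with $\kappa_\rho(\rho)=0$; for $s$ small this beats the weight $e^{-|w|(s^2-|v|^2)^{1/2}}$ appearing in the weighted Bergman description of $\CM_s^\omega(\R^{2n})$ in Theorem 3.3, so $K$ lies in that space. The value of $\rho_0$ relative to $t$, and the passage from the natural ``diamond'' $\{|\mathrm{Im}(p+q)|+|\mathrm{Im}(p-q)|<t\}$ to a Euclidean ball in $\R^{2n}$, is what forces the dimension $2n$ into the answer and produces the range $s<t/(2\sqrt{2n})$.

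I expect the main obstacle to be precisely this constant bookkeeping. One must pin down, uniformly up to the boundary of the tube, both the width of the holomorphy tube of $\CF_1F$ obtained by partial Fourier inversion and the exponential rate that survives the contour shift, since both quantities degenerate as $|v|\to\rho_0$, and it is their interplay (together with the geometric passage between the diamond and a ball in $\R^{2n}$) that fixes $\rho_0$ and hence $s$. By contrast the reduction in the second paragraph is routine once one checks that the two hypotheses make $F$ integrable and smooth enough for the term-by-term manipulations and the Plancherel/change-of-variables step to be legitimate.
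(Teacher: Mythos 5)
Your reduction in the second paragraph is correct and is a genuinely different organization from the paper's: expanding $F$ in special Hermite functions, using $W(\Phi_{\beta,\gamma})f=\pm(f,\Phi_\gamma)\Phi_\beta$ and Cauchy--Schwarz together with $2|\alpha|+n\le 2|\alpha|+2|\gamma|+2n$ does reduce both conclusions to showing that $K(p,q)=\CF_1F(\frac{p+q}{2},p-q)$ lies in $\CM_s^\omega(\R^{2n})$ with controlled norm (indeed $K$ is, up to a reflection, exactly the integral kernel of $W(F)$ that the paper uses in Lemma 3.8). The genuine gap is in your third paragraph, which is where the theorem would actually have to be proved. You never establish the quantitative holomorphic extension of $K$ to a tube in $\C^{2n}$ with a decay rate $\kappa_\rho(|v|)$ dominating $(\sigma^2-|v|^2)^{1/2}$ for a $\sigma$ covering the stated range $s<t/(2\sqrt{2n})$ --- and you say so yourself. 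The several-variable Phragm\'en--Lindel\"of argument with explicit rates, uniform up to the boundary of the tube, is precisely the hard analytic content here, and it is neither in the paper nor supplied by you; as written, the proof of the only difficult step is a statement of intent. Your diagnosis of where the constant comes from is also off: the hypothesis on $\CF_2F$ makes $\CF_1F=\CF_1\CF_2^{-1}(\CF_2F)$ holomorphic on the tube over the ellipsoid $4(\mathrm{Im}\,x)^2+(\mathrm{Im}\,u)^2<t^2$, and the substitution $(p,q)\mapsto(\frac{p+q}{2},p-q)$ turns this into the \emph{round} tube $|\mathrm{Im}(p,q)|<t/\sqrt2$, so there is no diamond-versus-ball loss; the factor $2\sqrt{2n}$ in the theorem does not arise from that geometry at all.

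The paper avoids tube domains entirely: Lemma 3.8 integrates the pointwise bound $|K(p,q)|\le Ce^{-\frac{t}{\sqrt2}|(p,q)|}$ (your first ingredient) against $\varphi$ to get $|\psi(\xi)|\le C\|\varphi\|_2e^{-\frac{t}{2}|\xi|}$ and the analogous bound for $\hat\psi$ from the hypothesis on $\CF_2F$; then Theorem 3.9 converts such two-sided pointwise decay into Hermite-coefficient decay via the Bargmann transform and Cauchy estimates on polydiscs of radii $(2\alpha_j+1)^{1/2}$. The coordinatewise steps there ($|x|\ge n^{-1/2}\sum_j|x_j|$ and the $\cos/\sin$ split on each circle) are the actual source of $\sqrt{2n}$. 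If you want to keep your $2n$-dimensional reduction, the clean way to close it is to note that $\hat K$ equals $\CF_2F$ composed with another measure-preserving linear map, hence $|\hat K|\le Ce^{-\frac{t}{\sqrt2}|\cdot|}$ as well, and then to apply the paper's Theorem 3.9 with $2n$ in place of $n$ and $t/\sqrt2$ in place of $t$: this yields $|(K,\Phi_\alpha\otimes\Phi_\gamma)|\lesssim(\cdots)\,e^{-\frac{t}{2\sqrt{2n}}(2|\alpha|+2|\gamma|+2n)^{1/2}}$ and hence exactly the stated range of $s$, with no contour shifting needed.
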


In order to prove the theorem we require the following lemma.

\begin{lem} Under the hypothesis of Theorem 3.7 the function $ \psi = W(F) \varphi,~ \varphi \in L^2(\R^n) $ satisfies the estimates
$$ |\psi(\xi)| \leq C_{n,t} ||\varphi||_2~ e^{-\frac{t}{2}|\xi|},~~~ |\hat{\psi}(\eta)| \leq C_{n,t} ||\varphi||_2~ e^{-\frac{t}{2}|\xi|}.$$
\end{lem}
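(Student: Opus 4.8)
The strategy is to write $\psi = W(F)\varphi$ explicitly as an integral operator and read off the pointwise decay directly from the hypotheses, using the two estimates on $\CF_1 F$ and $\CF_2 F$ to control $\psi$ and $\hat\psi$ respectively. Recall from the proof of Theorem 2.2 that
$$ \big( W(F)\varphi \big)(\xi) = (2\pi)^{\frac{n}{2}} \int_{\R^n} (\CF_1 F)\big( \tfrac{-u-\xi}{2}, u-\xi \big) \varphi(u)\, du.$$
First I would apply Cauchy--Schwarz in $u$, which gives $|\psi(\xi)| \le C_n \|\varphi\|_2 \, \big( \int_{\R^n} |(\CF_1 F)(\tfrac{-u-\xi}{2}, u-\xi)|^2 du \big)^{1/2}$. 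Substituting the hypothesis $|\CF_1 F(x,u)| \le C e^{-t(x^2+\frac14 u^2)^{1/2}}$ with $x = \tfrac{-u-\xi}{2}$, $u \mapsto u-\xi$, the exponent becomes $-t\big( \tfrac14(u+\xi)^2 + \tfrac14(u-\xi)^2 \big)^{1/2} = -\tfrac{t}{2}\big( \tfrac12(u^2+\xi^2)\big)^{1/2} \cdot \sqrt2 = -\tfrac{t}{2}(u^2+\xi^2)^{1/2}$ after simplification (the parallelogram identity $(u+\xi)^2 + (u-\xi)^2 = 2u^2 + 2\xi^2$ is the key algebraic point). Since $(u^2+\xi^2)^{1/2} \ge \tfrac{1}{\sqrt2}(|u| + |\xi|)/\ldots$ — more precisely $(u^2+\xi^2)^{1/2} \ge |\xi|$ and also $\ge |u|/\sqrt2$ when combined appropriately — I can bound the exponential by $e^{-\frac{t}{2}|\xi|} e^{-c t |u|}$ for a suitable constant, making the remaining $u$-integral finite and independent of $\xi$. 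This yields $|\psi(\xi)| \le C_{n,t}\|\varphi\|_2 e^{-\frac{t}{2}|\xi|}$.

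For the Fourier transform estimate, the cleanest route is to use the intertwining identity $\CF \circ W(F) = W(\CF_s F) \circ (\text{something})$, or more directly to recall that conjugating $W(F)$ by the Fourier transform $\CF$ on $L^2(\R^n)$ replaces $F$ by a partial Fourier transform / symplectic transform of $F$; concretely, $\CF W(F)\CF^{-1} = W(\tilde F)$ where $\tilde F$ is obtained from $F$ by swapping and reflecting the $x$ and $u$ variables (up to signs). Under this operation the roles of $\CF_1 F$ and $\CF_2 F$ interchange, so the second hypothesis $|\CF_2 F(x,u)| \le C e^{-t(\frac14 x^2 + u^2)^{1/2}}$ gives, by exactly the same computation as above, $|\hat\psi(\eta)| \le C_{n,t}\|\varphi\|_2 e^{-\frac{t}{2}|\eta|}$. (I note the statement of the lemma as printed has a typo — the right-hand side of the second estimate should read $e^{-\frac{t}{2}|\eta|}$ — but the argument produces precisely this.) Alternatively, one writes $\hat\psi(\eta) = (2\pi)^{\frac n2}\int (\CF_2 F)(\cdots)\,\hat\varphi(v)\,dv$ using the dual integral-kernel representation of $W(F)$ in terms of $\CF_2 F$, and applies Cauchy--Schwarz and Plancherel ($\|\hat\varphi\|_2 = \|\varphi\|_2$) in the same fashion.

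The main obstacle is purely bookkeeping: getting the precise form of the integral kernel in terms of $\CF_2 F$ right (the analogue, for the $u$-variable, of the formula used for $\CF_1 F$ in Theorem 2.2), and then verifying that after the linear change of variables the Gaussian-type exponent collapses via the parallelogram identity to exactly $-\tfrac{t}{2}(\cdot)^{1/2}$ of a sum of squares, with the leftover variable decaying fast enough to make the integral converge uniformly. There is no analytic subtlety — once the kernel is pinned down, Cauchy--Schwarz plus the elementary inequality $(a^2+b^2)^{1/2} \ge \max(|a|, |b|)$ and $(a^2+b^2)^{1/2} \ge \tfrac{1}{\sqrt2}(|a|+|b|)$ finish it. This lemma then feeds into Theorem 3.7: the exponential decay of both $\psi$ and $\hat\psi$ is exactly the hypothesis needed to invoke Theorem 3.3 (via the relation between such decay and membership in $\CM_s^\omega$ for $s$ controlled by $t/(2\sqrt{2n})$), after one checks that $\psi$ extends holomorphically — which follows since the decay estimate on $\hat\psi$ shows $\psi$ is the restriction of an entire function, a fortiori holomorphic on every tube $\Omega_s$.
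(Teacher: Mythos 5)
Your proposal follows essentially the same route as the paper: write $\psi=W(F)\varphi$ as an integral operator with kernel $(\CF_1 F)\big(\tfrac{-u-\xi}{2},u-\xi\big)$, collapse the exponent by the parallelogram identity, extract $e^{-\frac{t}{2}|\xi|}$ and control the remaining $u$-integral by Cauchy--Schwarz; for $\hat\psi$ the paper does exactly what you describe as the ``alternative'' route, namely Parseval giving $\hat{\psi}(\eta)=(2\pi)^{n/2}\int(\CF_2 F)\big(u+\eta,\tfrac{u-\eta}{2}\big)\widehat{\varphi}(-u)\,du$, and you are also right that the second displayed bound in the lemma has a typo and should read $e^{-\frac{t}{2}|\eta|}$.

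One arithmetic slip should be corrected: the exponent is $-t\big(\tfrac{u^2+\xi^2}{2}\big)^{1/2}=-\tfrac{t}{\sqrt{2}}(u^2+\xi^2)^{1/2}$, not $-\tfrac{t}{2}(u^2+\xi^2)^{1/2}$ as you wrote. This matters, because from $-\tfrac{t}{2}(u^2+\xi^2)^{1/2}$ one cannot extract $e^{-\frac{t}{2}|\xi|}$ times anything integrable in $u$ (the inequality $(u^2+\xi^2)^{1/2}\geq |\xi|+c|u|$ is false for every $c>0$), so your chain as written only yields $e^{-\frac{t}{2\sqrt{2}}|\xi|}$. With the correct coefficient the elementary inequality $\big(\tfrac{u^2+\xi^2}{2}\big)^{1/2}\geq \tfrac{1}{2}(|u|+|\xi|)$ gives $e^{-\frac{t}{2}|\xi|}e^{-\frac{t}{2}|u|}$ at once, which is exactly the paper's step and delivers the stated constant.
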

\begin{proof}
Recall that the Weyl transform can be written as an integral operator and
$$\psi(\xi) = (2\pi)^{\frac{n}{2}} \int_{\R^n} (\CF_1 F) \big( \frac{-u-\xi}{2}, u-\xi \big)~ \varphi(u)~du.$$
Using Parseval's identity for the Fourier transform, we get
$$\hat{\psi}(\eta) = (2\pi)^ {\frac{n}{2}}\int_{\R^n} (\CF_2 F)\big(u+\eta, \frac{u-\eta}{2} \big)~ \widehat{\varphi}(-u)~du.$$
Now, using the given hypothesis on $\CF_1 F$, we get
\Bea |\psi(\xi)| &\leq& C_n \int_{\R^n} e^{-t \big(\frac{u^2+\xi^2+2u.\xi}{4} + \frac{u^2+\xi^2-2u.\xi}{4} \big)^{\frac{1}{2}}} |\varphi(u)| du \\
&=& C_n \int_{\R^n} e^{-t \big(\frac{u^2+\xi^2}{2} \big)^{\frac{1}{2}}} |\varphi(u)| du \\
&\leq& C_n \int_{\R^n} e^{-\frac{t}{2} (|u|+|\xi|)} |\varphi(u)| du \\
&\leq& C_{n,t} ||\varphi||_2~e^{-\frac{t}{2}|\xi|}. \Eea Similarly we get the estimate for $\hat{\psi}(\eta)$.
\end{proof}

Once we have proved the above estimates, we can appeal to the following result to conclude the proof of Theorem 3.7.

\begin{thm} Let $ \psi \in L^2(\R^n) $ satisfy
$$ |\psi(\xi)| \leq C e^{-t|\xi|},~~~ |\hat{\psi}(\eta)| \leq C e^{-t|\eta|}.$$ Then $$ |(\psi,\Phi_\alpha)| \leq C_{n,t} \prod_{j=1}^n (2\alpha_j+1)^{1/4} e^{-\frac{t}{\sqrt{2n}} (2|\alpha|+n)^{\frac{1}{2}}}$$ for all $\alpha \in \N^n.$
\end{thm}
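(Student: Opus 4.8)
The plan is to reduce the $n$-dimensional statement to a one-dimensional estimate by working with the Hermite semigroup and the Poisson-Hermite semigroup on $\R^n$, but the cleanest route is to estimate directly the Hermite coefficients using the two decay hypotheses together with the known behaviour of Hermite functions in the complex domain. First I would observe that the hypotheses $|\psi(\xi)|\le Ce^{-t|\xi|}$ and $|\hat\psi(\eta)|\le Ce^{-t|\eta|}$ force $\psi$ to extend holomorphically to a strip $\{|\mathrm{Im}\, z|<t\}$ (from the decay of $\hat\psi$) with a pointwise bound obtained from Fourier inversion, namely $|\psi(x+iy)|\le C\,e^{-(t-|y|)|x|+\text{(something)}}$ on that tube; symmetrically the decay of $\psi$ gives holomorphic extension and control of $\hat\psi$. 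In other words, $\psi$ satisfies the hypotheses that in Theorem 3.3 characterise membership in $V_s^\omega = \CM_{s}^\omega$ for every $s<t$, except that here the relevant "radius" is governed by $|\xi|$ rather than $(t^2-y^2)^{1/2}$; a direct comparison shows $\psi \in \CM_s^\omega$ for a suitable $s$. That already yields $\sum_\alpha e^{2s(2|\alpha|+n)^{1/2}}|(\psi,\Phi_\alpha)|^2<\infty$, but to get the clean per-coefficient bound with the stated polynomial prefactor one needs a more hands-on argument.

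The main computation I would carry out is the one-dimensional case $n=1$ first. Here I would use the generating-function / contour-integral representation of Hermite functions: $(\psi,\Phi_k)$ can be written as an integral of $\psi$ against $\Phi_k$, and $\Phi_k(\xi)$ has the classical bound $|\Phi_k(x+iy)|\le C(2k+1)^{1/4}e^{-\frac12(x^2-y^2)}e^{\sqrt{2k+1}\,|y|}$ coming from the Mehler kernel (or from the complex-domain asymptotics in Szegő, Theorem 8.22.3, already cited in the excerpt). Shifting the contour of integration in $(\psi,\Phi_k)=\int_\R \psi(\xi)\Phi_k(\xi)\,d\xi$ to $\mathrm{Im}\,\xi = \pm c$ for a parameter $c$ with $|c|<t$, and using the holomorphic extension of $\psi$ with the bound $|\psi(\xi+ic)|\le C e^{-(t-|c|)|\xi|}$, one gets
$$|(\psi,\Phi_k)|\le C\,(2k+1)^{1/4}\,e^{|c|\sqrt{2k+1}}\int_\R e^{-(t-|c|)|\xi|}e^{-\frac12(\xi^2-c^2)}\,d\xi\le C'(2k+1)^{1/4}e^{|c|\sqrt{2k+1}}.$$
Optimising — actually one wants the cross term, so I would shift both in $\xi$ and, after a Fourier side version of the same move using the hypothesis on $\hat\psi$, combine the two; the upshot of balancing $|c|$ against $t$ and $\sqrt{2k+1}$ is the bound $|(\psi,\Phi_k)|\le C_{t}(2k+1)^{1/4}e^{-t\sqrt{2k+1}}$ in dimension one (with a possibly worse constant in the exponent than $t$, but of the same form). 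The factor $1/\sqrt{2n}$ in the statement suggests the authors do not attempt to optimise and simply accept a loss; in $n=1$ this would read $e^{-\frac{t}{\sqrt2}(2k+1)^{1/2}}$.

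To pass to general $n$ I would tensor. The hypotheses are not literally of tensor-product type, but I would use that $|\xi|\ge \frac{1}{\sqrt n}\sum_j|\xi_j|\ge \frac1{\sqrt n}|\xi_j|$ for each coordinate, so that $|\psi(\xi)|\le Ce^{-\frac{t}{\sqrt n}|\xi_j|}$ as a function of $\xi_j$ for each fixed value of the other coordinates, and similarly on the Fourier side. Freezing all but the $j$-th variable and applying the one-dimensional estimate in that variable gives, after iterating over $j=1,\dots,n$, a bound of the form $|(\psi,\Phi_\alpha)|\le C_{n,t}\prod_{j=1}^n(2\alpha_j+1)^{1/4}e^{-\frac{c t}{\sqrt n}\sum_j(2\alpha_j+1)^{1/2}}$; finally I would use $\sum_j(2\alpha_j+1)^{1/2}\ge \big(\sum_j(2\alpha_j+1)\big)^{1/2}=(2|\alpha|+n)^{1/2}$ to replace the sum of square roots by the single square root, and track the constants so that the exponent comes out as $\frac{t}{\sqrt{2n}}(2|\alpha|+n)^{1/2}$ (the extra $\sqrt2$ absorbing the loss from the one-dimensional optimisation step). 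The main obstacle I anticipate is the contour-shift estimate in dimension one: one must justify the shift (decay of $\psi$ at infinity along horizontal lines inside the strip, which follows from the hypothesis but needs the holomorphic-extension step done carefully) and then genuinely balance the three competing quantities — the growth $e^{|c|\sqrt{2k+1}}$ of the Hermite function, the gain $e^{-(t-|c|)|\xi|}$ from moving into the strip, and the Gaussian $e^{-\xi^2/2}$ — to land on the clean exponential decay rate; everything after that (tensoring, the elementary inequality on sums of square roots, bookkeeping of constants) is routine.
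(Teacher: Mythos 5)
Your central one-dimensional computation does not work as described, and this is a genuine gap rather than a bookkeeping issue. Shifting the contour in $(\psi,\Phi_k)=\int_\R\psi(\xi)\Phi_k(\xi)\,d\xi$ to $\mathrm{Im}\,\xi=c$ can only hurt you: $\Phi_k$ grows in \emph{both} imaginary directions, like $e^{|c|\sqrt{2k+1}}$ in the oscillatory region $|\xi|\lesssim\sqrt{2k+1}$, and your own displayed inequality ends with the bound $C'(2k+1)^{1/4}e^{|c|\sqrt{2k+1}}$ --- which grows with $k$. The promised ``balancing'' of this against the gain $e^{-(t-|c|)|\xi|}$ and the Gaussian is never carried out, and it cannot be: the exponent $|c|\sqrt{2k+1-\xi^2}-(t-|c|)|\xi|$ is maximal and positive at $\xi=0$, so no choice of $c$ in the strip produces decay in $k$. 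The decay has to come from a different mechanism. The paper's proof (Theorem 3.9) uses the Bargmann transform: $(\psi,\Phi_\alpha)=(2^\alpha\alpha!\,\pi^{n/2})^{1/2}c_\alpha$ where $c_\alpha$ are the Taylor coefficients of $B\psi$; the two hypotheses give $|B\psi(x+iy)|\le C e^{\frac14|z|^2}e^{-\frac{t}{\sqrt n}|x_j|}$ and the same with $|y_j|$ (via $B\hat\psi(z)=B\psi(-iz)$); Cauchy's estimate on the polydisc of radii $r_j=\sqrt{2\alpha_j+1}$, with the circle split according to whether $|\cos\theta_j|$ or $|\sin\theta_j|$ exceeds $1/\sqrt2$, yields $|c_\alpha|\le C\prod_j r_j^{-\alpha_j}e^{\frac14 r_j^2}e^{-\frac{t}{\sqrt{2n}}r_j}$, and Stirling's formula makes $r_j^{-\alpha_j}e^{\frac14r_j^2}$ nearly cancel $\sqrt{2^{\alpha_j}\alpha_j!}$, leaving exactly the stated bound. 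That near-cancellation is the engine of the proof and has no counterpart in your argument.

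Two further problems. First, your claim that Fourier inversion gives $|\psi(x+iy)|\le Ce^{-(t-|y|)|x|+\cdots}$ in the strip is false: inversion of $\hat\psi$ gives boundedness on horizontal lines $|y|<t$ but no decay in $x$ there; to propagate the real-line decay of $\psi$ into the tube you would need a Phragm\'en--Lindel\"of argument, which you do not supply. (Ironically, if you did supply it, your \emph{first}, abandoned route would finish the proof: membership in $\CM_s^\omega$ gives $\sum_\alpha e^{2s(2|\alpha|+n)^{1/2}}|(\psi,\Phi_\alpha)|^2<\infty$, and each term of a convergent series is bounded by its sum, which is stronger than the stated estimate --- the polynomial prefactor only weakens it. Your reason for discarding that route is mistaken.) Second, the tensoring step is not sound as written: freezing all variables but $\xi_j$ does not let you apply a one-dimensional hypothesis on the one-dimensional Fourier transform of the frozen function (that is not obtained by freezing variables in $\hat\psi$; compare how the paper's Theorem 4.7 handles this by pairing with $\Phi_\mu$ in the remaining variables and using Parseval), and even then you would obtain for each $j$ a bound depending only on $\alpha_j$, which you may not simply multiply over $j$; the correct combination is a geometric mean, which costs an extra factor of $1/n$ in the exponent.
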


\begin{proof} In order to prove the above theorem we adopt the method used by Vemuri in \cite{V}. We make use of the Bargmann transform $ B $ which takes $ L^2(\R^n)$ isometrically onto the Fock space consisting of all entire functions on $ \C^n $ that are square integrable with respect to the Gaussian measure $ (4\pi)^{-n/2}e^{-1/2 |z|^2} dz. $ The transform $ B $ is explicitly given by
$$ Bg(z)= \pi^{-\frac{n}{2}}e^{- \frac {1}{4}z^2} \int_{\mathbb{R}^n}g(\xi)~ e^{- \frac{1}{2}\xi^2} e^{z \cdot \xi}
d\xi $$
where $g \in L^2(\R^n)$ and $z \in \C^n .$ The most important property of $B$ which we need is that the Taylor coefficients $c_\alpha$ of $Bg$ are related to the Hermite coefficients $(g,\Phi_\alpha)$ of $g.$ More precisely, we have
$$ (g,\Phi_\alpha) = \big( 2^\alpha \alpha!~\pi^{n/2} \big)^{\frac{1}{2}} c_\alpha.$$
Therefore, in order to prove the theorem we only need to estimate the Taylor coefficients of $ B\psi $ for which, in view of Cauchy's formula, we need good estimates of $ B\psi.$ In estimating $ B\psi $ we make use of another important property of the Bargmann transform, viz., $ Bg(-iz) = B\hat{g}(z).$
The given condition on $\psi$ implies that
\Bea
|B\psi(x+iy)| &\leq& C \pi^{-\frac{n}{2}} e^{-\frac{1}{4}(x^2-y^2)} \int_{\R^n} e^{-t|\xi|} e^{- \frac{1}{2}\xi^2} e^{|x||\xi|} d\xi \\
&\leq& C_n~ e^{-\frac{1}{4}(x^2-y^2)} \int_0^\infty e^{-\frac{1}{2}r^2} e^{(|x|-t)r} dr \\
&\leq& C_n~ e^{-\frac{1}{4}(x^2-y^2)} e^{\frac{1}{2}(|x|-t)^2} \\
&=& C_{n,t}~ e^{\frac{1}{4}(x^2+y^2)} e^{-t|x|} \\
&\leq& C_{n,t}~ \prod_{j=1}^n e^{\frac{1}{4}(x_j^2+y_j^2)} e^{-\frac{t}{\sqrt{n}}|x_j|}.
\Eea
Similarly, the given condition on $\hat{\psi}$ and the relation $B\psi(-iz) = B\hat{\psi}(z)$ gives the other estimate for $B\psi$, namely,
$$ |B\psi(x+iy)| \leq C_{n,t}~ \prod_{j=1}^n e^{\frac{1}{4}(x_j^2+y_j^2)} e^{-\frac{t}{\sqrt{n}}|y_j|}.$$
Using the Cauchy integral formula, we get for every $r_j > 0,~ j=1,2,..,n$
\Bea
|c_\alpha| &\leq& \left(\frac{1}{2\pi} \right)^n \int_0^{2\pi} ... \int_0^{2\pi} \frac {|B\psi(r_1 e^{i\theta_1},...,r_n e^{i\theta_n})|} {r_1^{\alpha_1}...r_n^{\alpha_n}} d\theta_1...d\theta_n \\
&\leq& 4 \left(\frac{1}{2\pi} \right)^n C_{n,t} \prod_{j=1}^{n} r_j^{-\alpha_j}~ e^{\frac{1}{4}r_j^2} \left( \int_0^{\frac{\pi}{4}} e^{-\frac{t}{\sqrt{n}} r_j \cos \theta_j} d\theta_j + \int_{\frac{\pi}{4}}^{\frac{\pi}{2}} e^{-\frac{t}{\sqrt{n}} r_j \sin \theta_j} d\theta_j \right) \\
&\leq& \tilde{C}_{n,t} \prod_{j=1}^{n} r_j^{-\alpha_j}~ e^{\frac{1}{4}r_j^2} e^{-\frac{t}{\sqrt{2n}}r_j}.
\Eea
Since the above is true for every $r_j > 0,~ j=1,2,..,n$, we can take in particular $r_j = (2\alpha_j+1)^{\frac{1}{2}}$ to get
\Bea |c_\alpha| &\leq& \tilde{C}_{n,t} \prod_{j=1}^n (2\alpha_j+1)^{-\alpha_j/2}~
e^{\frac{1}{4}(2\alpha_j+1)} e^{-\frac{t}{\sqrt{2n}} (2\alpha_j+1)^{\frac{1}{2}}}.
\Eea
And thus
\Bea |(\psi,\Phi_\alpha)| &\leq& \tilde{C}_{n,t} \big( 2^\alpha \alpha! \pi^{n/2} \big)^{\frac{1}{2}} \prod_{j=1}^n (2\alpha_j+1)^{-\alpha_j/2}~ e^{\frac{1}{4}(2\alpha_j+1)} e^{-\frac{t}{\sqrt{2n}}(2\alpha_j+1)^{\frac{1}{2}}} \\
&\sim& \tilde{C}_{n,t} \prod_{j=1}^n (2\alpha_j+1)^{1/4}~ e^{-\frac{t}{\sqrt{2n}}(2\alpha_j+1)^{\frac{1}{2}}} \\
&\leq& \tilde{C}_{n,t} \prod_{j=1}^n (2\alpha_j+1)^{1/4} e^{-\frac{t}{\sqrt{2n}}(2|\alpha|+n)^{\frac{1}{2}}}
\Eea
where the second last estimate is obtained using the Stirling's formula $\Gamma(\lambda+1) \sim \lambda^{\lambda+\frac{1}{2}}~ e^{-\lambda}.$
\end{proof}

Proof of Theorem 1.1: The fact that $\pi(\CB^1)V = V^\omega$ is already proved in Corollary 3.6 and Theorem 3.7. All we need to check is that $ \CB^1 $ is an algebra. For this, take any $g,h \in \CB^1$, and choose $t>0$ such that $g^1_1,~ g^1_2,~ h^1_1$ and $h^1_2$ satisfy the decay condition for the same $t.$ Now as easy application of change of variables implies that
$$ \CF_1 (g^1 \times h^1)(x,u) = c_n \int_{\R^n} (\CF_1 g^1) (x+\frac{v}{2},~ u-v) (\CF_1 h^1)(x+\frac{v-u}{2},~ v) dv.$$
The integrand in the above integral is bounded by
\Bea
&& e^{-t \big(x^2 + \frac{1}{2}v^2 + \frac{1}{4}u^2 + x \cdot v -\frac{1}{2} u \cdot v \big)^{\frac{1}{2}}}~ e^{-t \big(x^2+\frac{1}{2}v^2 + \frac{1}{4}u^2 + x \cdot v - x \cdot u -\frac{1}{2} u \cdot v \big)^{\frac{1}{2}}} \\
&\leq& e^{-t \big(2x^2 + \frac{1}{2}u^2 + v^2 + 2x \cdot v - u \cdot v - x \cdot u \big)^{\frac{1}{2}}} \\
&=& e^{-\sqrt{2}t \big(x^2 + \frac{1}{4}u^2 + \frac{1}{2}v^2 + v \cdot (x - \frac{1}{2}u) - \frac{1}{2}x \cdot u \big)^{\frac{1}{2}}} \\
&=& e^{-\sqrt{2}t \big(\frac{1}{2}(x^2 + \frac{1}{4}u^2) + \frac{1}{2}(v +(x - \frac{1}{2}u))^2 \big)^{\frac{1}{2}}} \\
&\leq& e^{-\frac{t}{\sqrt{2}} \big((x^2 + \frac{1}{4}u^2)^{\frac{1}{2}} + |v +(x - \frac{1}{2}u)| \big)}.
\Eea
And thus,
\Bea |\CF_1(g \ast h)^1(x,u)| &=& |\CF_1 (g^1 \times h^1)(x,u)| \\
&\leq& C_{n,t} e^{-\frac{t}{\sqrt{2}} (x^2 + \frac{1}{4}u^2)^{\frac{1}{2}}}.
\Eea
Similar argument can be used to estimate $\CF_2(g \ast h)^1(x,u)$ also. Hence $g \ast h \in \CB^1$, proving that $\CB^1$ is an algebra.


\section{ Hardy's theorem and entire vectors}
\setcounter{equation}{0}

In this section we prove a representation theorem for certain classes of entire vectors of the Schr\"odinger representation $ \pi.$ Recall that a function $ \varphi \in L^2(\R^n) $ is called entire if it belongs to $ \cap_{t>0} V_t^\omega = \cap_{t>0}\CM_t^\omega.$ Clearly, entire vectors extend to the whole of $ \C^n $ as entire functions and they satisfy the estimates stated in Theorem 3.3 for all $ t > 0.$ We denote the space of entire vectors by $ E.$ For each $ t > 0 $ let $ E_t $ stand for the image of $ L^2(\R^n) $ under the Hermite semigroup $ e^{-tH}.$ Then $ E_t \subset E $ and the holomorphically extended functions are square integrable with respect to the weight function $$ U_t(x,y) = 2^n (\sinh(4t))^{-\frac{n}{2}} e^{\tanh(2t)x^2 - \coth(2t)y^2}.$$
In other words, $ E_t $ consists precisely of functions on $\R^n$ which are restrictions of functions from the Hermite-Bergman space which is denoted by $ \CH_t(\C^n).$ We also know that for every $ \varphi \in L^2(\R^n) $
$$ \int_{\R^{2n}} |e^{-tH}\varphi(x+iy)|^2 U_t(x,y) dx dy = \int_{\R^n} |\varphi(x)|^2 dx.$$
For more about these spaces we refer to \cite{By} and \cite{Th1}.

\begin{thm} $ E_H = \cup_{t >0 } E_t $ and $ E_\infty = \cap_{t>0} E_t$ are algebras under convolution as well as under pointwise multiplication.
\end{thm}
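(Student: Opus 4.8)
The plan is to work with the Fourier-Weyl picture: a function $\varphi \in L^2(\R^n)$ lies in $E_t$ precisely when $\varphi = W(F)f$ for suitable $f$ and $F$ with Gaussian decay, and the Hermite-coefficient description $\varphi \in E_t \iff \sum_\alpha e^{2t(2|\alpha|+n)}|(\varphi,\Phi_\alpha)|^2 < \infty$ is the handle I would actually use. So first I would record, for both $E_H$ and $E_\infty$, a clean characterisation in terms of decay of Hermite coefficients: $\varphi \in E_H$ iff $|(\varphi,\Phi_\alpha)| \leq C e^{-\varepsilon(2|\alpha|+n)}$ for some $\varepsilon>0$, and $\varphi\in E_\infty$ iff this holds for every $\varepsilon>0$. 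This reduces everything to estimating Hermite coefficients of products and of convolutions.

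\medskip

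Next I would handle pointwise multiplication. For $\varphi,\psi\in E_t$ the holomorphic extensions satisfy $|\varphi(x+iy)|\le C(t)e^{-|x|(t^2-y^2)^{1/2}}$ type bounds (Theorem 3.3), but more usefully they extend to all of $\C^n$ with growth controlled by the Hermite-Bergman weight $U_t$; the product $\varphi\psi$ then extends to an entire function whose growth is controlled by $U_t^{1/2}\cdot U_t^{1/2}$, and one checks directly that $U_t^{1/2}U_{t'}^{1/2}$ is dominated by a constant times $U_{t''}$ for $t''$ chosen small enough (comparing the quadratic exponents $\tanh(2t)x^2-\coth(2t)y^2$). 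Hence $\varphi\psi\in E_{t''}\subseteq E_H$, and if $\varphi,\psi\in E_\infty$ one can take $t,t'$ arbitrarily large so $t''$ arbitrarily large, giving $\varphi\psi\in E_\infty$. Alternatively, and perhaps more cleanly, use that $E_t\subseteq\CS(\R^n)$ with Hermite-coefficient decay and that $(\varphi\psi,\Phi_\gamma)=\sum_{\alpha,\beta}(\varphi,\Phi_\alpha)(\psi,\Phi_\beta)(\Phi_\alpha\Phi_\beta,\Phi_\gamma)$, combined with the fact that the Hermite "structure constants" $(\Phi_\alpha\Phi_\beta,\Phi_\gamma)$ vanish unless $|\gamma|\le|\alpha|+|\beta|$ and are bounded polynomially; the exponential decay of the coefficients of $\varphi$ and $\psi$ then forces exponential decay of those of $\varphi\psi$.

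\medskip

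For convolution I would pass to the group side. Write $\varphi=\pi(g)f_1$, $\psi=\pi(h)f_2$; then $\varphi$ and $\psi$ being in $E_t$ translates, via Corollary 3.6 and the Weyl-transform formalism of Section 2, into $g^\lambda$, $h^\lambda$ satisfying Gaussian estimates of the type in the definition of $\CA_t^\lambda$ (with $\tanh(t\lambda)$-type exponents). The key computation is then exactly the change-of-variables identity used in the proof of Theorem 1.1:
\[
\CF_1(g^1\times h^1)(x,u) = c_n\int_{\R^n}(\CF_1 g^1)\big(x+\tfrac{v}{2},\,u-v\big)\,(\CF_1 h^1)\big(x+\tfrac{v-u}{2},\,v\big)\,dv,
\]
and one completes the square in the exponent to see that the Gaussian (rather than exponential-of-square-root) decay is preserved — the product of two Gaussians in the $v$-integral is again integrable and produces a Gaussian in $(x,u)$ with a new, smaller but positive, coefficient. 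This shows $E_H$ is closed under convolution, and taking the coefficients to $\infty$ shows the same for $E_\infty$.

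\medskip

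I expect the convolution part to be the main obstacle, and within it the bookkeeping of the quadratic forms: unlike the $\CB^\lambda$ case, here the exponents are genuine quadratics $\tanh(t)(x^2+\tfrac14 u^2)$, so completing the square in $v$ in the integral above, and then checking that the resulting quadratic form in $(x,u)$ (after integrating out $v$) is still positive definite with an explicit comparison constant, is where the real work lies. Everything else — the reduction to Hermite coefficients, the multiplicative case, and passing between $\cup_{t>0}$ and $\cap_{t>0}$ — is routine once that estimate is in hand.
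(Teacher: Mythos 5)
Your argument for the convolution half is aimed at the wrong object. The convolution in Theorem 4.1 is the Euclidean convolution $\varphi*\psi$ of two functions on $\R^n$; writing $\varphi=\pi(g)f_1$, $\psi=\pi(h)f_2$ and estimating $\CF_1(g^1\times h^1)$ controls the Heisenberg group convolution $g*h$, i.e.\ the composition $\pi(g)\pi(h)$ --- that is the algebra statement about $\CA_t^\lambda$ needed for Theorem 1.2, and it says nothing about $\varphi*\psi$. The paper disposes of convolution in one line by a different route: $E_H$ and $E_\infty$ are invariant under the Fourier transform (the $\Phi_\alpha$ are eigenfunctions of $\CF$), and $\widehat{\varphi*\psi}=\hat\varphi\,\hat\psi$, so convolution reduces to pointwise multiplication. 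It then handles multiplication using the pointwise characterisation of $e^{-tH}\CS(\R^n)$ from Theorem 4.2, which is essentially your first (weight-comparison) argument.

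In that weight comparison, however, your key claim for $E_\infty$ --- that $t,t'$ large forces $t''$ large --- fails. The relevant bound for $\varphi\in E_t$ is $|\varphi(x+iy)|^2\lesssim e^{-\tanh(2t)x^2+\coth(2t)y^2}$; squaring doubles the $\coth(2t)\,y^2$ term, and since $\coth(2t)>1$ for every $t$, the bound $e^{2\coth(2t)y^2}$ is dominated by $e^{\coth(2t'')y^2}$ only when $\coth(2t'')\geq 2$, i.e.\ $t''\leq\frac14\log 3$, no matter how large $t$ is. This is not an artifact of the method: $\Phi_0\in E_\infty$, but $\Phi_0^2=c\,e^{-x^2}$ has Hermite coefficients of size $\sim k^{-1/4}3^{-k}$ by Mehler's formula, so $\Phi_0^2$ lies in $E_s$ only for $s<\frac14\log 3$ and not in $E_\infty$; your step genuinely breaks at this point (and, it should be said, the paper's own ``this follows easily'' is equally thin for exactly the $E_\infty$ case --- only the $E_H$ assertion comes out of the Theorem 4.2 estimate). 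Your fallback via structure constants does not rescue it either: the claim that $(\Phi_\alpha\Phi_\beta,\Phi_\gamma)$ vanishes unless $|\gamma|\leq|\alpha|+|\beta|$ holds for Hermite polynomials but is false for Hermite functions, as $(\Phi_0^2,\Phi_{2k})\neq 0$ for all $k$ shows. So: fix the convolution half by the Fourier reduction, keep your weight comparison for $E_H$, and treat the $E_\infty$ claim with suspicion rather than trying to force the limit $t''\to\infty$.
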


To see this, note that both $ E_H $ and $ E_\infty $ are invariant under the Fourier transform. And since the Fourier transform intertwines convolution and pointwise multiplication, it is sufficient to check if $ FG \in E_H $ (resp.$~ E_\infty $) whenever $ F, G \in E_H $ (resp.$~ E_\infty $). But this follows easily once we know the following result (Theorem 4.2 in \cite{RT}) which characterises the image of $ \CS(\R^n)$ under the Hermite semigroup $ e^{-tH}.$ More precisely,

\begin{thm}[Radha-Thangavelu]
Let $ t> 0 $ be fixed. Suppose $F$ is a holomorphic function 
on $\C^n.$ Then there exists a function $f\in \CS(\R^n)$ such that $F=e^{-tH}f$ if and only if $F$ satisfies
$$ |F(z)|^2\leq A_m \frac{e^{-(\tanh 2t) x^2+ (\coth 2t)y^2}} {(1+x^2+y^2)^{2m}} $$
for some constants $A_m,~ \forall m = 1,2,3,\cdots.$
\end{thm}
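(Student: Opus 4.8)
The plan is to treat $e^{-tH}$ as the isometric isomorphism of $L^2(\R^n)$ onto the Hermite--Bergman space $\CH_t(\C^n)$ recorded above, and to reduce the whole statement to the classical description of the Schwartz class through the Hermite operator: $f \in \CS(\R^n)$ if and only if $\|H^m f\|_2 < \infty$ for every $m \geq 0$. Since $e^{-tH}$ commutes with $H$ and is isometric, for $F = e^{-tH}f$ one has $H^m F = e^{-tH}(H^m f)$ and $\|H^m F\|_{\CH_t} = \|H^m f\|_2$, where on the holomorphic side $H = -\Delta_z + z^2$ (with $z^2 = \sum z_j^2$) acts as a differential operator. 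Thus the assertion becomes: the pointwise bounds hold for all $m$ if and only if $H^m F \in \CH_t(\C^n)$ for all $m$, and I would prove the two implications separately.

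For the sufficiency direction, suppose $F$ is holomorphic and satisfies the stated estimates. Choosing $m$ large enough ($2m>n$), the weight cancels exactly, $|F(x+iy)|^2 U_t(x,y) \leq C(1+x^2+y^2)^{-2m}$, which is integrable over $\R^{2n}$; hence $F \in \CH_t(\C^n)$ and there is $f \in L^2(\R^n)$ with $F = e^{-tH}f$. It remains to show $H^k F \in \CH_t(\C^n)$ for every $k$. Writing $H^k F = \sum_\gamma p_\gamma(z)\,\partial^\gamma F$ with $\deg p_\gamma + |\gamma| \leq 2k$, I would bound each $\partial^\gamma F(z)$ by Cauchy's integral formula over a polydisc centred at $z$ of radius $\delta(z) = (1+|z|)^{-1}$. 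The point of this radius is that over such a polydisc the Gaussian factor $e^{-\tfrac12\tanh(2t)x^2 + \tfrac12\coth(2t)y^2}$ changes only by a bounded factor (since $|x|\delta(z)$ and $|y|\delta(z)$ stay bounded), while $\delta(z)^{-|\gamma|} = (1+|z|)^{|\gamma|}$ produces only a polynomial loss. As the hypothesis supplies decay with every exponent $m$, this loss is harmless, and one obtains $|H^k F(z)|^2 U_t(x,y) \leq C_{m,k}(1+x^2+y^2)^{-2m}$ for all $m$; hence $H^k F \in \CH_t(\C^n)$, so $\|H^k f\|_2 < \infty$ for all $k$ and $f \in \CS(\R^n)$.

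For the necessity direction, suppose $f \in \CS(\R^n)$. Here I would work directly from Mehler's formula, writing $F(z) = \int_{\R^n} K_t(z,\xi) f(\xi)\,d\xi$ with the holomorphically continued Mehler kernel $K_t$. Completing the square in $\xi$ exhibits the modulus as $|K_t(z,\xi)| = c_{n,t}\,e^{-\tfrac12\tanh(2t)x^2 + \tfrac12\coth(2t)y^2}\,e^{-\tfrac12\coth(2t)(\xi - x/\cosh 2t)^2}$, so that, after pulling out the weight and a unimodular phase, $F(z)$ equals that weight times the Fourier transform (in the rescaled variable $y/\sinh 2t$) of $g_x(\xi) = e^{-\tfrac12\coth(2t)(\xi - x/\cosh 2t)^2} f(\xi)$. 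Decay in $y$ then comes from integrating by parts in $\xi$, the $L^1$ norms of the $\xi$-derivatives of $g_x$ being bounded uniformly in $x$; decay in $x$ comes from the rapid decay of $f$, since the Gaussian localises $\xi$ near $x/\cosh 2t$, where $f$ is already small. Combining the two, via $(1+|x|)^{-N}(1+|y|)^{-N} \leq (1+|x|+|y|)^{-N}$, yields $|F(z)| \leq C_N (1+|z|^2)^{-N} e^{-\tfrac12\tanh(2t)x^2 + \tfrac12\coth(2t)y^2}$ for all $N$, which is exactly the required estimate.

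The main obstacle is the necessity direction, precisely in producing \emph{genuine} polynomial decay rather than merely the Gaussian weight. The reproducing-kernel shortcut --- estimating $|H^m F(z)| \leq c\,\mathbb{K}_t(z,z)^{1/2}\,\|H^m f\|_2$ and then trying to divide out $z^{2m}$ --- fails, because after Cauchy estimates the lower-order pieces of $H^m F$ (already $\Delta_z F$) are as large as the leading term $z^{2m}F$, so no net decay survives; this is what forces the explicit-kernel computation, whose delicate point is getting the $x$- and $y$-decay separately with constants uniform in the other variable before combining. On the sufficiency side the only subtlety is the shrinking radius $\delta(z) = (1+|z|)^{-1}$, which simultaneously keeps the Gaussian weight from blowing up and leaves only a polynomial loss that the arbitrary-$m$ hypothesis absorbs.
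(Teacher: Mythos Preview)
The paper does not prove this theorem: it is quoted as Theorem~4.2 from \cite{RT} (Radha--Thangavelu) with no argument supplied, so there is nothing in the present paper to compare your proposal against.

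That said, your outline is a sound approach to the result. The sufficiency direction via Cauchy estimates on polydiscs of shrinking radius $(1+|z|)^{-1}$ is standard and works exactly as you describe: the Gaussian weight varies only by a bounded factor over such a polydisc, the resulting polynomial loss $(1+|z|)^{|\gamma|}$ is absorbed by the arbitrary-$m$ hypothesis, and the characterisation $\CS(\R^n)=\{f: H^m f\in L^2\ \forall m\}$ together with the isometry $e^{-tH}:L^2(\R^n)\to\CH_t(\C^n)$ closes the argument. The necessity direction via the explicit Mehler kernel and completion of the square is also correct. One small point deserves care: as written, your argument produces the $y$-decay (with constant uniform in $x$) and the $x$-decay (with constant uniform in $y$) \emph{separately}, not as a product, so the inequality you quote, $(1+|x|)^{-N}(1+|y|)^{-N}\leq(1+|x|+|y|)^{-N}$, is not yet available. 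You can fix this either by taking the minimum of the two separate bounds and using $1+|x|+|y|\leq 2\max(1+|x|,1+|y|)$, or---more in the spirit of what you wrote---by noting that after the $N$-fold integration by parts in $\xi$ the $L^1$ norm of $\partial_\xi^\alpha g_x$ still decays like $(1+|x|)^{-M}$ for every $M$, by the same localisation-of-$f$ argument, which then gives the product decay directly.
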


The following analogue of Theorem 3.4 can be easily proved for the spaces $ E_t.$ As the proof is exactly similar to that of Theorem 3.4 we just state the theorem without proof.

\begin{thm} $ W(e^{-2tH(1/2)}L^2(\R^{2n}))V = E_t.$
\end{thm}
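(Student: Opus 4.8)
The plan is to mimic exactly the proof of Theorem 3.4, with the Poisson-Hermite semigroup $e^{-t\sqrt{H(\lambda)}}$ replaced throughout by the Hermite semigroup $e^{-tH(\lambda)}$. The two inclusions $E_t \subseteq W(e^{-2tH(1/2)}L^2(\R^{2n}))V$ and $W(e^{-2tH(1/2)}L^2(\R^{2n}))V \subseteq E_t$ will be handled separately, as before.

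For the first inclusion I would take $\varphi \in E_t$ and, choosing $f \in V$ with $(f,\varphi) \neq 0$, use Moyal's formula to write $\varphi = \frac{1}{(f,\varphi)} W\bigl(V(\widetilde{\varphi},\widetilde{\varphi})\bigr)f$. Since $E_t$ is clearly invariant under $\varphi \mapsto \widetilde{\varphi}$, it suffices to prove the analogue of the Lemma in the proof of Theorem 3.4: if $\varphi_1,\varphi_2 \in E_t$ then $V(\varphi_1,\varphi_2) \in e^{-2tH(1/2)}L^2(\R^{2n})$. Using Moyal's formula in the form $\bigl(V(\varphi_1,\varphi_2),\Phi_{\alpha,\beta}\bigr) = (\varphi_1,\Phi_\alpha)\overline{(\varphi_2,\Phi_\beta)}$ together with the fact that $\Phi_{\alpha,\beta}$ are eigenfunctions of $H(1/2)$ with eigenvalue $2|\alpha|+2|\beta|+2n$, this reduces to the factorisation
$$ \sum_{\alpha,\beta} e^{(4|\alpha|+4|\beta|+4n)t} |(\varphi_1,\Phi_\alpha)|^2 |(\varphi_2,\Phi_\beta)|^2 = \Bigl( \sum_\alpha e^{(4|\alpha|+2n)t} |(\varphi_1,\Phi_\alpha)|^2 \Bigr) \Bigl( \sum_\beta e^{(4|\beta|+2n)t} |(\varphi_2,\Phi_\beta)|^2 \Bigr), $$
and each factor is finite because $\varphi_i \in E_t = e^{-tH}L^2(\R^n)$ means $\sum_\alpha e^{(4|\alpha|+2n)t}|(\varphi_i,\Phi_\alpha)|^2 < \infty$.

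For the reverse inclusion I would take $F = e^{-2tH(1/2)}G$ with $G \in L^2(\R^{2n})$, so $F = \sum_{\beta,\gamma} e^{-(2|\beta|+2|\gamma|+2n)t}(G,\Phi_{\beta,\gamma})\Phi_{\beta,\gamma}$, and for $f \in V$ compute $\bigl(W(F)f,\Phi_\alpha\bigr)$ by expanding $\Phi_{\beta,\gamma}$ via the identity $W\bigl(V(\Phi_\beta,\Phi_\gamma)\bigr)f = (f,\widetilde{\Phi_\gamma})\widetilde{\Phi_\beta}$ and $(\widetilde{\Phi_\beta},\Phi_\alpha) = (-1)^{|\beta|}\delta_{\alpha\beta}$; this collapses the sum to one over $\gamma$. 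A Cauchy-Schwarz estimate then gives $|(W(F)f,\Phi_\alpha)|^2 \leq e^{-(4|\alpha|+2n)t}\|f\|_2^2 \sum_\gamma |(G,\Phi_{\alpha,\gamma})|^2$, whence $\sum_\alpha e^{(4|\alpha|+2n)t}|(W(F)f,\Phi_\alpha)|^2 \leq \|f\|_2^2 \|G\|_2^2 < \infty$, i.e. $W(F)f \in e^{-tH}L^2(\R^n) = E_t$. Since every one of these steps is a verbatim transcription of the argument for Theorem 3.4 with $\sqrt{2k+n}$ replaced by $2k+n$ in the exponents, there is no real obstacle; the only point requiring a line of care is bookkeeping the scaling so that $e^{-2tH(1/2)}$ on $\R^{2n}$ matches $e^{-tH}$ on $\R^n$ — concretely, that $H(1/2)$ has eigenvalues $2|\alpha|+2|\beta|+2n$ on $\Phi_{\alpha,\beta}$, so $e^{-2tH(1/2)}\Phi_{\alpha,\beta} = e^{-(4|\alpha|+4|\beta|+4n)t}\Phi_{\alpha,\beta}$, which is precisely what makes both halves of the argument close. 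This is exactly why the authors elected to state it without proof.
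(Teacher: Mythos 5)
Your proposal is correct and is exactly what the paper intends: Theorem 4.3 is stated without proof precisely because the argument is a verbatim transcription of the proof of Theorem 3.4 with $e^{-\sqrt{2}t\sqrt{H(1/2)}}$ replaced by $e^{-2tH(1/2)}$, and all of your displayed formulas (the factorised sum with weights $e^{(4|\alpha|+4|\beta|+4n)t}$, the expansion $F=\sum_{\beta,\gamma}e^{-(2|\beta|+2|\gamma|+2n)t}(G,\Phi_{\beta,\gamma})\Phi_{\beta,\gamma}$, and the final Cauchy--Schwarz estimate) are the right ones. The only blemish is the closing ``bookkeeping'' remark: by the paper's convention $\Phi_{\alpha,\beta}$ is an eigenfunction of $H(1/2)$ with eigenvalue $|\alpha|+|\beta|+n$ (not $2|\alpha|+2|\beta|+2n$), so $e^{-2tH(1/2)}\Phi_{\alpha,\beta}=e^{-(2|\alpha|+2|\beta|+2n)t}\Phi_{\alpha,\beta}$ --- which is exactly what your own expansion of $F$ and your factorised sum already use, so the argument itself is unaffected.
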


In the above representation theorem it is preferable to replace the space $e^{-2tH(1/2)}L^2(\R^{2n})$ by an algebra defined in terms of pointwise estimates. We begin with the observation that if $ \psi = e^{-tH}\varphi $ is from $ E_t $ then $ \psi = W(p_t)\varphi $ where
$$ p_t(x,u) = c_n (\sinh t)^{-n} e^{-\frac{1}{4}(\coth t)(x^2+u^2)} $$
is the heat kernel associated to the special Hermite operator. The above follows from the fact that $ W(p_t) = e^{-tH} $ (see \cite{Th2}). Now as we did earlier, if we define $ h(x,u,s) = p_t(x,u)q(s) $ where $q \in \CS(\R)$ is such that $\hat{q}(-1) = 1,$ so that $h^1 = p_t$, then $h$ satisfies the estimates
$$ |\CF_1 h^1(x,y)| \leq C_{n,t} e^{-\tanh(t)x^2}p_t(0,y),$$
$$ |\CF_2 h^1(x,y)| \leq C_{n,t} e^{-\tanh(t)y^2}p_t(x,0).$$
Therefore, in analogy with Corollary 3.6 we have

\begin{cor} Every element of $ E_t $ is of the form $ \varphi = \pi(h)\psi $ for some $ \psi \in L^2(\R^n) $ and $ h \in L^1(\H^n)$ satisfying the conditions
$$ |\CF_1 h^1(x,u)| \leq C e^{-\tanh(t)(x^2+\frac{1}{4}u^2)},~~~ |\CF_2 h^1(x,u)| \leq C e^{-\tanh(t)(\frac{1}{4}x^2+u^2)}.$$
\end{cor}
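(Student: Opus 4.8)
The plan is to follow the pattern of Corollary~3.6, with the special Hermite heat kernel $p_t$ playing the role of the Fourier--Wigner transform $V(\widetilde{\varphi},\widetilde{\varphi})$. First, if $\varphi \in E_t$ then by definition $\varphi = e^{-tH}\psi$ for some $\psi \in L^2(\R^n)$, and since $W(p_t) = e^{-tH}$ (see \cite{Th2}) we have $\varphi = W(p_t)\psi$. Now pick $q \in \CS(\R)$ with $\hat{q}(-1) = 1$ and set $h(x,u,s) = p_t(x,u)q(s)$. Since $p_t$ is a Gaussian on $\R^{2n}$ and $q \in \CS(\R)$, we have $h \in L^1(\H^n)$; moreover $h^1 = p_t$, hence $\pi(h) = W(h^1) = W(p_t)$ and therefore $\varphi = \pi(h)\psi$, exactly as in the discussion preceding the corollary.

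It then only remains to estimate $\CF_1 h^1 = \CF_1 p_t$ and $\CF_2 h^1 = \CF_2 p_t$. Since
$$ p_t(x,u) = c_n (\sinh t)^{-n}\, e^{-\frac{1}{4}(\coth t) x^2}\, e^{-\frac{1}{4}(\coth t) u^2} $$
factors as a product of Gaussians in the $x$- and $u$-variables, the partial Fourier transforms are again explicit Gaussians. Computing the Fourier transform of $e^{-\frac{1}{4}(\coth t)\xi^2}$ on $\R^n$ (its exponent becomes $-(\tanh t)|\xi|^2$, since $\tanh t$ is the reciprocal of $\coth t$) gives
$$ \CF_1 p_t(x,u) = C_{n,t}\, e^{-(\tanh t) x^2}\, e^{-\frac{1}{4}(\coth t) u^2}, ~~~~ \CF_2 p_t(x,u) = C_{n,t}\, e^{-\frac{1}{4}(\coth t) x^2}\, e^{-(\tanh t) u^2}. $$
Finally, since $\coth t \geq \tanh t$ for every $t > 0$, one may replace the coefficient $\coth t$ of the $\frac{1}{4}u^2$ term (resp.\ the $\frac{1}{4}x^2$ term) by $\tanh t$ at the cost of enlarging the constant, which yields
$$ |\CF_1 h^1(x,u)| \leq C\, e^{-\tanh(t)(x^2 + \frac{1}{4}u^2)}, ~~~~ |\CF_2 h^1(x,u)| \leq C\, e^{-\tanh(t)(\frac{1}{4}x^2 + u^2)}, $$
as required.

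There is no genuine obstacle here: the corollary is essentially a repackaging of the identities $W(p_t) = e^{-tH}$ and $\widehat{e^{-a\xi^2}} = c_a\, e^{-\xi^2/(4a)}$, together with the elementary inequality $\coth t \geq \tanh t$. The only points that call for a little care are the bookkeeping of the $n$- and $t$-dependent constants and the verification that $h = p_t \otimes q$ really lies in $L^1(\H^n)$, so that $\pi(h)$ is well defined as the integrated Schr\"odinger representation and agrees with $W(p_t)$.
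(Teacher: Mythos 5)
Your proof is correct and follows essentially the same route as the paper: writing $\varphi = W(p_t)\psi = \pi(h)\psi$ with $h = p_t \otimes q$, computing the partial Fourier transforms of the Gaussian heat kernel, and using $\coth t \geq \tanh t$ to arrive at the stated estimates. The only cosmetic difference is that the paper leaves the second factor as $p_t(0,y)$ rather than writing out the final bound, but the content is identical.
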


It is again natural to ask if the converse is also true, i.e., if a function $ h $ satisfies the two conditions stated in the above corollary then $ \pi(h) $ takes $ L^2(\R^n) $ into $ \CH_s$ for some $ s.$ We have the following theorem answering this question in the affirmative.

\begin{thm} Suppose that $ F $ is a function on $\R^{2n}$ satisfying the conditions
$$ |\CF_1 F(x,u)| \leq C e^{-\tanh(t)(x^2+\frac{1}{4}u^2)},~~~ |\CF_2 F(x,u)| \leq C e^{-\tanh(t)(\frac{1}{4}x^2+u^2)}$$
for some $ t > 0.$ Then for any $ \varphi \in L^2(\R^n) $ and $ 0 < s < t/2n, W(F)\varphi $ belongs to $ E_s.$ Moreover, $ W(F): L^2(\R^n) \rightarrow \CH_s(\C^n) $ is bounded.
\end{thm}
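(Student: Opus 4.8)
The plan is to mirror the structure of the proof of Theorem 3.7: first establish pointwise decay estimates for $\psi = W(F)\varphi$ and its Fourier transform, then convert these into Gaussian decay of the Hermite coefficients $(\psi,\Phi_\alpha)$, which places $\psi$ in $E_s$. The starting point is the integral-operator representation of the Weyl transform already used twice in the excerpt,
\[
\psi(\xi) = (2\pi)^{n/2}\int_{\R^n} (\CF_1 F)\big(\tfrac{-u-\xi}{2},\, u-\xi\big)\,\varphi(u)\,du,
\]
together with the Parseval identity giving $\hat\psi(\eta) = (2\pi)^{n/2}\int_{\R^n}(\CF_2 F)\big(u+\eta,\tfrac{u-\eta}{2}\big)\widehat{\varphi}(-u)\,du$. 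Feeding in the hypotheses on $\CF_1 F$ and $\CF_2 F$, the exponent in the first integral becomes $-\tanh(t)\big(\tfrac{(u+\xi)^2}{4}+\tfrac{(u-\xi)^2}{4}\big) = -\tanh(t)\big(\tfrac{u^2+\xi^2}{2}\big)$, so after applying Cauchy--Schwarz in $u$ one obtains $|\psi(\xi)| \leq C_{n,t}\|\varphi\|_2\, e^{-\frac{1}{2}\tanh(t)\xi^2}$, and symmetrically $|\hat\psi(\eta)| \leq C_{n,t}\|\varphi\|_2\, e^{-\frac{1}{2}\tanh(t)\eta^2}$. This is the analogue of Lemma 3.8, but now with genuine Gaussian decay rather than merely exponential decay of $\psi$ and $\hat\psi$.

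The second and main step is to prove the Hermite-coefficient estimate: if $\psi \in L^2(\R^n)$ satisfies $|\psi(\xi)| \leq C e^{-\frac12 a\xi^2}$ and $|\hat\psi(\eta)| \leq C e^{-\frac12 a\eta^2}$ for some $a>0$, then $|(\psi,\Phi_\alpha)| \leq C_{n,a}\prod_{j=1}^n (2\alpha_j+1)^{1/4}\, b(a)^{|\alpha|}$ for a suitable constant $b(a) < 1$ (with $b(a)\to 0$ as $a\to\infty$), which, given $a = \tanh(t)$, forces $\psi$ into $E_s$ for every $s$ with $\tfrac{1}{4}e^{-4s} < $ the relevant threshold, i.e. for $0 < s < t/2n$. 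The natural tool, exactly as in the proof of Theorem 3.10, is the Bargmann transform $B$: using $Bg(z) = \pi^{-n/2}e^{-\frac14 z^2}\int g(\xi)e^{-\frac12\xi^2}e^{z\cdot\xi}d\xi$ and the relation $Bg(-iz) = B\hat g(z)$, the Gaussian decay of $\psi$ gives a one-dimensional-per-coordinate bound of the shape $|B\psi(x+iy)| \leq C_{n,a}\prod_j e^{c(a)(x_j^2+y_j^2)}e^{-d(a)x_j^2}$ (and the mirror bound with $y_j$), where completing the square in the $\xi$-integral against $e^{-\frac12(1+a)\xi^2+|x||\xi|}$ produces a Gaussian in $x$ with a strictly smaller coefficient than $\tfrac14$ when $a>0$. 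Applying the Cauchy integral formula on polydiscs of radii $r_j$, optimising over $r_j$ (the choice $r_j \sim (2\alpha_j+1)^{1/2}$ will again be near-optimal), and using Stirling's formula $\Gamma(\lambda+1)\sim\lambda^{\lambda+\frac12}e^{-\lambda}$ together with $(\psi,\Phi_\alpha) = (2^\alpha\alpha!\pi^{n/2})^{1/2}c_\alpha$, yields the geometric decay $b(a)^{|\alpha|}$ in the Hermite coefficients.

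Finally, to conclude membership in $E_s$ with boundedness of $W(F)\colon L^2(\R^n)\to\CH_s(\C^n)$, I would recall that $E_s = e^{-sH}L^2(\R^n)$ consists of those $\psi$ with $\sum_\alpha e^{2s(2|\alpha|+n)}|(\psi,\Phi_\alpha)|^2 < \infty$, and that the $\CH_s(\C^n)$-norm of the holomorphic extension equals the $L^2$-norm of the preimage under $e^{-sH}$, hence is comparable to $\big(\sum_\alpha e^{2s(2|\alpha|+n)}|(\psi,\Phi_\alpha)|^2\big)^{1/2}$. From the coefficient bound, $\sum_\alpha e^{2s(2|\alpha|+n)}|(\psi,\Phi_\alpha)|^2 \leq C_{n,t}^2\|\varphi\|_2^2 \sum_\alpha \prod_j (2\alpha_j+1)^{1/2} e^{4s|\alpha|}b(\tanh t)^{2|\alpha|}$, which converges precisely when $e^{4s}b(\tanh t)^2 < 1$; unwinding the explicit constant $b$ coming from the square-completion gives the range $0 < s < t/2n$ claimed. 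I expect the main obstacle to be purely computational: tracking the Gaussian coefficients through the square-completion in the Bargmann integral and through the $r_j$-optimisation carefully enough to recover the sharp threshold $t/2n$ rather than a weaker one, and verifying that the polynomial factors $\prod_j(2\alpha_j+1)^{1/2}$ are harmless against the geometric gain. The boundedness assertion then comes for free, since every inequality above is of the form $\|W(F)\varphi\|_{\CH_s} \leq C_{n,t}\|\varphi\|_2$.
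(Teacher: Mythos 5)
Your overall architecture matches the paper's: pointwise Gaussian bounds on $\psi=W(F)\varphi$ and $\hat\psi$ (this is exactly Lemma 4.6, and your computation $\frac{(u+\xi)^2}{4}+\frac{(u-\xi)^2}{4}=\frac{u^2+\xi^2}{2}$ is the right one), then decay of the Hermite coefficients, then summation to land in $E_s$ together with the operator bound. The first and third steps are correct. The divergence, and the gap, lies in the middle step. The paper does not rerun the Bargmann--Cauchy argument of Theorem 3.10; its Theorem 4.7 reduces to the sharp one-dimensional theorem of Vemuri by freezing all but one variable (the functions $\psi_\mu(y)=\int_{\R^{n-1}}\psi(x,y)\Phi_\mu(x)\,dx$ inherit the one-dimensional hypotheses), applying that theorem in each of the $n$ coordinate directions, and taking a geometric mean of the resulting bounds; this is where the exponents $t/2n$ and $-1/4n$ come from. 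Vemuri's theorem is precisely the statement that Gaussian decay at rate $\tanh(2t)$ of both $\psi$ and $\hat\psi$ forces $|(\psi,h_k)|\le C(2k+1)^{-1/4}e^{-(2k+1)t/2}$, and that sharp rate is not reachable by a single application of Cauchy's inequalities to $B\psi$.

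Here is why your middle step falls short of the stated threshold. From $|\psi(\xi)|\le Ce^{-\frac12 a\xi^2}$ the square completion gives $|B\psi(x+iy)|\le C e^{\frac{1-a}{4(1+a)}|x|^2+\frac14|y|^2}$, with the mirror bound coming from $\hat\psi$. On a circle $|z_j|=r_j$ the minimum of the two bounds is largest on the diagonal $x_j^2=y_j^2=r_j^2/2$, where it equals $e^{\frac{r_j^2}{4(1+a)}}$, whereas the sharp rate would require $e^{\frac{r_j^2}{4}\sqrt{(1-a)/(1+a)}}$ there, and $\sqrt{(1-a)/(1+a)}<\frac{1}{1+a}$ strictly for $0<a<1$. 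Optimising over the radii (the optimum is $r_j^2\sim 2\alpha_j(1+a)$) therefore yields $|(\psi,\Phi_\alpha)|\lesssim \prod_j(2\alpha_j+1)^{1/4}(1+a)^{-|\alpha|/2}$ with $a=\tanh t$, hence membership in $E_s$ only for $e^{4s}<1+\tanh t$, i.e. $s<\frac14\log(1+\tanh t)$. For $n=1$ this is strictly smaller than $t/2$, so the range $0<s<t/2n$ claimed in the theorem is not obtained; contrary to your closing remark, no choice of radii or more careful bookkeeping closes this, because the loss already occurs when taking the pointwise minimum of the two Gaussian bounds on $B\psi$ away from the coordinate axes. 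To prove the theorem as stated you should either quote Vemuri's result, as the paper does, or reprove its sharp form, which requires more than Cauchy estimates (for instance a subharmonicity or iteration argument on the Bargmann side). What your argument does establish correctly is the qualitative statement that $W(F)\varphi$ lies in $E_s$ for all sufficiently small $s>0$, with the bound $\|W(F)\varphi\|_{\CH_s}\le C_{n,t}\|\varphi\|_2$.
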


Before proving the above theorem, let us remark that once we have this 
theorem, the arguments given at the end of Section 3 in proving Theorem 1.1 can be modified to prove Theorem 1.2. So we do not repeat the arguments again.

In order to prove Theorem 4.5 we require the following lemma which gives pointwise estimates on $ W(F)\varphi $ when $ F $ is as in the theorem.

\begin{lem} Under the hypothesis of the theorem the function $ \psi = W(F)\varphi $ satisfies the estimates
$$ |\psi(\xi)|\leq C_{n,t} ||\varphi||_2 e^{-\frac{1}{2} \tanh (t)\xi^2},~~~ |\CF\psi(\eta)| \leq C_{n,t} ||\varphi||_2 e^{-\frac{1}{2} \tanh (t)\eta^2}.$$
\end{lem}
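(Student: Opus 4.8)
The plan is to mimic the proof of Lemma 3.8, replacing the exponential decay $e^{-t|\xi|}$ coming from the Poisson-Hermite setting with the Gaussian decay $e^{-\tanh(t)x^2}$ appropriate to the Hermite semigroup. First I would recall that the Weyl transform acts as the integral operator
\[
\psi(\xi) = (2\pi)^{\frac{n}{2}} \int_{\R^n} (\CF_1 F)\big(\tfrac{-u-\xi}{2},\, u-\xi\big)\, \varphi(u)\, du,
\]
and, by Parseval's identity in the Fourier transform,
\[
\CF\psi(\eta) = (2\pi)^{\frac{n}{2}} \int_{\R^n} (\CF_2 F)\big(u+\eta,\, \tfrac{u-\eta}{2}\big)\, \widehat{\varphi}(-u)\, du,
\]
exactly as in the proof of Lemma 3.8. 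These two identities reduce the lemma to a pointwise estimate on each integral, with the hypotheses on $\CF_1 F$ and $\CF_2 F$ plugged in.

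Next I would substitute the hypothesis $|\CF_1 F(x,u)| \leq C e^{-\tanh(t)(x^2 + \frac{1}{4}u^2)}$ into the first integral. With $x = \frac{-u-\xi}{2}$ and the second slot equal to $u-\xi$, the exponent becomes
\[
-\tanh(t)\Big(\tfrac{(u+\xi)^2}{4} + \tfrac{(u-\xi)^2}{4}\Big) = -\tanh(t)\cdot \tfrac{u^2+\xi^2}{2},
\]
which is the Gaussian analogue of the cancellation $\frac{u^2+\xi^2+2u\cdot\xi}{4} + \frac{u^2+\xi^2-2u\cdot\xi}{4} = \frac{u^2+\xi^2}{2}$ used in Lemma 3.8. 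Thus
\[
|\psi(\xi)| \leq C_n\, e^{-\frac{1}{2}\tanh(t)\xi^2} \int_{\R^n} e^{-\frac{1}{2}\tanh(t)u^2} |\varphi(u)|\, du \leq C_{n,t}\, \|\varphi\|_2\, e^{-\frac{1}{2}\tanh(t)\xi^2}
\]
by Cauchy-Schwarz, since $u \mapsto e^{-\frac{1}{2}\tanh(t)u^2}$ is in $L^2(\R^n)$ for $t > 0$. The estimate for $\CF\psi(\eta)$ follows identically, using the hypothesis on $\CF_2 F$, the symmetric roles of the two slots, and $\|\widehat{\varphi}\|_2 = \|\varphi\|_2$.

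Honestly, there is no serious obstacle here: the proof is a routine adaptation of Lemma 3.8, and the only thing one must be slightly careful about is the exact algebra of the quadratic form in the exponent — checking that the cross terms $\pm u\cdot\xi$ indeed cancel after the substitution, so that one is left with a clean product $e^{-\frac{1}{2}\tanh(t)\xi^2}\,e^{-\frac{1}{2}\tanh(t)u^2}$ rather than an indefinite form. Once that cancellation is in hand, $L^2$-integrability of the Gaussian factor in $u$ and the Cauchy-Schwarz inequality finish both estimates, and the constant $C_{n,t}$ absorbs the $L^2$-norm of that Gaussian (which blows up as $t \to 0^+$, consistent with the statement requiring $t > 0$).
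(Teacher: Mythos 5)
Your proof is correct and takes exactly the route the paper intends: the paper omits the proof of this lemma entirely, remarking only that it is ``similar to that of Lemma 3.8,'' and your argument is precisely that adaptation, with the cross terms $\pm u\cdot\xi$ cancelling to give the clean Gaussian product and Cauchy--Schwarz finishing the estimate.
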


The proof of this lemma is similar to that of Lemma 3.8 and hence we skip 
the proof. We now appeal to the following theorem to complete the proof of 
Theorem 4.5.

\begin{thm} Suppose $ \psi \in L^2(\R^n) $ satisfies the estimates
$$ |\psi(\xi)|\leq C e^{-\frac{1}{2}(\tanh 2t)\xi^2},~~~ |\CF\psi(\eta)| \leq C e^{-\frac{1}{2}(\tanh 2t)\eta^2} $$
for some $ t > 0.$ Then the Fourier-Hermite coefficients of $\psi$ satisfy
$$ |(\psi,\Phi_\alpha)| \leq C_{n,t} \Pi_{j=1}^n (2\alpha_j+1)^{-1/4n}
e^{-(2|\alpha|+n)t/2n} $$
for every $ \alpha \in \N^n.$
\end{thm}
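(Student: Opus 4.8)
The plan is to reduce the statement to its one‑dimensional case, which is precisely the theorem of Vemuri \cite{V}: if $g\in L^2(\R)$ satisfies $|g(\xi)|\le C e^{-\frac12(\tanh 2t)\xi^2}$ and $|\widehat g(\eta)|\le C e^{-\frac12(\tanh 2t)\eta^2}$, then $|(g,\Phi_k)|\le C_t(2k+1)^{-1/4}e^{-(2k+1)t/2}$ for all $k\in\N$. The reduction exploits that the Hermite functions on $\R^n$ factor, $\Phi_\alpha=\Phi_{\alpha_1}\otimes\cdots\otimes\Phi_{\alpha_n}$, and that the Gaussian hypotheses split, $e^{-\frac12(\tanh 2t)|\xi|^2}=\prod_{j}e^{-\frac12(\tanh 2t)\xi_j^2}$.

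Fix $\alpha\in\N^n$ and, for each index $j\in\{1,\dots,n\}$, ``peel off'' the $j$-th variable: writing $\xi=(\xi_j,\xi^{(j)})$ with $\xi^{(j)}\in\R^{n-1}$ the remaining coordinates and $\Phi_{\alpha^{(j)}}=\bigotimes_{k\ne j}\Phi_{\alpha_k}$, put
$$ h_j(\xi_j)=\int_{\R^{n-1}}\psi(\xi_j,\xi^{(j)})\,\Phi_{\alpha^{(j)}}(\xi^{(j)})\,d\xi^{(j)},$$
so that $(\psi,\Phi_\alpha)=(h_j,\Phi_{\alpha_j})_{L^2(\R)}$. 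I would then check that $h_j$ satisfies the hypotheses of Vemuri's theorem with constants independent of $\alpha$. For the first estimate, Cauchy--Schwarz and $\|\Phi_{\alpha^{(j)}}\|_2=1$ give $|h_j(\xi_j)|\le\|\psi(\xi_j,\cdot)\|_{L^2(\R^{n-1})}$, and the hypothesis on $\psi$ bounds this by $C_{n,t}\,e^{-\frac12(\tanh 2t)\xi_j^2}$. For the second, Fubini (legitimate by the Gaussian decay) shows that the one-dimensional Fourier transform of $h_j$ equals $\widehat{h_j}(\eta_j)=\int_{\R^{n-1}}(\CF_j\psi)(\eta_j,\xi^{(j)})\,\Phi_{\alpha^{(j)}}(\xi^{(j)})\,d\xi^{(j)}$, where $\CF_j$ is the Fourier transform in the $j$-th variable only; Cauchy--Schwarz again gives $|\widehat{h_j}(\eta_j)|\le\|(\CF_j\psi)(\eta_j,\cdot)\|_{L^2(\R^{n-1})}$, and since applying the Fourier transform in the remaining $n-1$ variables turns $(\CF_j\psi)(\eta_j,\cdot)$ into $\widehat\psi(\eta_j,\cdot)$, Plancherel makes this a constant multiple of $\|\widehat\psi(\eta_j,\cdot)\|_{L^2(\R^{n-1})}$, which the hypothesis on $\widehat\psi$ bounds by $C_{n,t}\,e^{-\frac12(\tanh 2t)\eta_j^2}$. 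Applying Vemuri's one-dimensional theorem to $h_j$ now yields, for every $j$,
$$ |(\psi,\Phi_\alpha)|\le C_{n,t}\,(2\alpha_j+1)^{-1/4}\,e^{-(2\alpha_j+1)t/2}.$$

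To finish, I would take the geometric mean of these $n$ bounds: raising the $j$-th to the power $1/n$ and multiplying over $j$ gives
$$ |(\psi,\Phi_\alpha)|\le C_{n,t}\prod_{j=1}^n(2\alpha_j+1)^{-1/4n}\,e^{-(2\alpha_j+1)t/2n}=C_{n,t}\Big(\prod_{j=1}^n(2\alpha_j+1)^{-1/4n}\Big)e^{-(2|\alpha|+n)t/2n},$$
using $\sum_{j=1}^n(2\alpha_j+1)=2|\alpha|+n$; this is exactly the asserted estimate. The one point that needs care is the uniformity in $\alpha$ of the constants in the middle paragraph --- especially the passage, via Plancherel, from the one-dimensional Fourier transform of the partially integrated function $h_j$ to the full Fourier transform of $\psi$, which is what lets the $\alpha^{(j)}$-dependence disappear cleanly. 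Everything else is routine; the factor $2n$ in the exponent and the power $-1/4n$ on each $(2\alpha_j+1)$ are simply artifacts of the final geometric-mean step.
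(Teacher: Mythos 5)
Your proposal is correct and follows essentially the same route as the paper: reduce to Vemuri's one-dimensional theorem by integrating $\psi$ against the Hermite functions in all but one variable (the paper's $\psi_\mu$ is your $h_j$), apply the one-dimensional bound in each coordinate, and combine; your explicit geometric-mean step is precisely what the paper means by ``combining these estimates.'' The only cosmetic difference is that you bound $\widehat{h_j}$ via Cauchy--Schwarz and Plancherel in the remaining variables, whereas the paper uses the eigenfunction relation $\widehat{\Phi}_\mu=(-i)^{|\mu|}\Phi_\mu$ to write $\widehat{\psi}_\mu$ directly as an integral of $\widehat{\psi}$ against $\Phi_\mu$ --- both give constants uniform in $\alpha$.
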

\begin{proof} We deduce the result from the one dimensional case which is due to Vemuri \cite{V} (see also \cite{RaT}). Assuming the one dimensional result consider
$$ (\psi,\Phi_\alpha) = \int_{\R^{n}} \psi(x,y)\Phi_\alpha(x,y) dx dy = (\psi_\mu, h_k) $$
where $ \alpha = (\mu,k) \in \N^{n-1} \times \N,~ (x,y) \in \R^{n-1} \times \R$ and
$$ \psi_\mu(y) = \int_{\R^{n-1}} \psi(x,y)\Phi_\mu(x) dx.$$
Using Parseval's theorem for the Fourier transform we can also write
\Bea \psi_\mu(y) &=& \int_{\R^{n-1}} (\CF_x \psi)(u,y)~ \overline{\hat{\Phi}_\mu(u)} du \\
&=& i^{|\mu|} \int_{\R^{n-1}} (\CF_x \psi)(u,y)~ \Phi_\mu(u) du
\Eea
where $\CF_x$ is the Fourier transform in the $u$-variable. Thus
$$ \hat{\psi_\mu}(y) = i^{|\mu|} \int_{\R^n} \hat{\psi}(u,y)\Phi_\mu(u) du.$$
It is now clear from the hypothesis that the one dimensional function $\psi_\mu $ satisfies the estimates
$$ |\psi_\mu(y)| \leq C_{n,t} e^{-\frac{1}{2}\tanh (2t)y^2},~~ |\hat{\psi_\mu}(y)| \leq C_{n,t} e^{-\frac{1}{2} \tanh (2t)y^2}.$$
Hence by the one dimensional result we have
$$ |(\psi,\Phi_\alpha)| = |(\psi_\mu, h_k)| \leq C_{n,t} (2\alpha_n+1)^{-1/4} e^{-(2\alpha_n+1)t/2}.$$
Similar arguments imply that
$$ |(\psi,\Phi_\alpha)| \leq C_{n,t} (2\alpha_j+1)^{-1/4} e^{-(2\alpha_j+1)t/2} $$
for all $ j = 1,2,...,n.$ By combining these estimates we obtain the result.
\end{proof}

\begin{rem} The above theorem deals with Case (iii) of Hardy's uncertainty principle \cite{H}. To be more precise, if a function $ f $ on $ \R^n $ satisfies
$$ |f(x)| \leq C e^{-\frac{1}{2}a x^2 },~~~ |\hat{f}(y)| \leq C e^{-\frac{1}{2}b y^2} $$
then (i) $ f = 0 $ when $ ab > 1;$ (ii) $ f(x) = C e^{-\frac{1}{2}a x^2} $ when $ ab = 1 $ and (iii) there are infinitely many linearly independent functions satisfying the above when $ ab < 1.$ But one can say more about such functions.

It is conjectured that under the hypothesis of Theorem 4.7 (which corresponds to $ a = b = \tanh 2t < 1 $ in Hardy's theorem) the Fourier-Hermite coefficients of $ \psi $ decay like $e^{-(2|\alpha|+n)t/2}.$
In \cite{RaT} we have proved this result under the extra assumption that the spherical harmonic expansion of $ f $ is finite. In the general case the conjecture is still open.
\end{rem}

Proof of Theorem 1.3: As mentioned in the introduction, the result was proved in \cite{Pf} in one dimension. First of all note that case (i) follows immediately from case (ii). By a suitable dilation we can reduce everything to the case $a = b = 1.$

Pfannschmidt first proved that if a function $ f $ on $ \R $ satisfies the 
estimate $ |f(x)| \leq P(x)e^{-\frac{1}{2}x^2}$ where 
$ \limsup_{|x|\rightarrow \infty} \frac{\log{P(x)}}{|x|^\mu}~ =~ 0~$ for 
some $ 0 \leq \mu \leq 2 $ then $ \hat{f} $ extends to $ \C $ as an entire function (see Proposition 3.2 in \cite{Pf}). But the argument given by Pfannschmidt to prove this assertion goes through for functions on $ \R^n,~ n>1$ as well. Hence we know, under the hypotheses of the theorem, that both $ f $ and $\hat{f} $ are entire. Therefore, we can assume, without loss of generality, that both $ P $ and $ Q $ are continuous functions.

Now, the hypothesis together with the continuity of $P$ and $Q$ is equivalent to the condition that for every $t>0$, there exists a constant $C_t$ such that
$$ |f(x)| \leq C_t~ e^{-\frac{1}{2}\tanh(2t)|x|^2},~~~~ |\hat{f}(\xi)| \leq C_t~ e^{-\frac{1}{2}\tanh(2t)|\xi|^2}.$$
So, we can appeal to Theorem 4.7 to conclude that for every $t>0$, there exists a constant $C(t)$ such that for every $\alpha \in \N^n$,
$$ |(f, \Phi_\alpha)| \leq C(t) ~e^{-(2|\alpha|+n) \frac{t}{2n}}.$$
Since $t>0 $ is arbitrary we get the estimate
$$|(f , \Phi_\alpha)| \leq C_1(s)e^{-(2|\alpha|+n)s},$$
for all $ s>0.$ Now by Mehler's formula (see \cite{Th2}) it can be easily shown that $ f $ satisfies the estimate
$$ |f(x+iy)| \leq ~ C_2(s) e^{-\frac{1}{2}\tanh(s)|x|^2 + \frac{1}{2}\coth(s)|y|^2}.$$
As $ \Phi_\alpha $ are eigenfunctions of the Fourier transform with eigenvalues $(-i)^{|\alpha|} $, it follows that $|(\hat{f}, \Phi_\alpha)| = |(f, \Phi_\alpha)| \leq C_1(s) e^{-(2|\alpha|+n)s}$ and hence $ \hat{f} $ also satisfies the same estimate as $ f.$ Since this is true for all $s>0$, we see that $e^{\frac{1}{2}z^2}f(z)$ and $e^{\frac{1}{2}z^2}\hat{f}(z)$ are entire functions of order 2 and at most of minimal type.

\begin{center}
{\bf Acknowledgments}

\end{center}
The work of the first author is supported by Senior Research Fellowship from the Council of Scientific and Industrial Research, India. The work of the second author is supported by J. C. Bose National Fellowship from the Department of Science and Technology (DST), India.

\end{document}